\newtheorem{Theorem}{Theorem}[section]
\newtheorem{Lemma}[Theorem]{Lemma}
\newtheorem{Proposition}[Theorem]{Proposition}
\newtheorem{Corollary}[Theorem]{Corollary}
\newtheorem{Remark}[Theorem]{Remark}
\numberwithin{equation}{section}
\newcommand{\lc}
{\mathrel{\raise2pt\hbox{${\mathop<\limits_{\raise1pt\hbox
{\mbox{$\sim$}}}}$}}}
\newcommand{\gc}
{\mathrel{\raise2pt\hbox{${\mathop>\limits_{\raise1pt\hbox{\mbox{$\sim$}}}}$}}}
\newcommand{\ec}
{\mathrel{\raise2pt\hbox{${\mathop=\limits_{\raise1pt\hbox{\mbox{$\sim$}}}}$}}}
\begin{document}

%%%%%%%%%%%%%%%%%%%%%%%%%%%%%%%%%%%%%%%%%%%%%%%%%%%%%%%
\title{Minimal time  control of exact synchronization for parabolic systems }

\author{Lijuan Wang\thanks{School of
Mathematics and Statistics, Computational Science Hubei Key Laboratory,
Wuhan University, Wuhan 430072, China;
e-mail: ljwang.math@whu.edu.cn. This work was supported by the
National Natural Science Foundation of China under grant 11771344.}
\quad and \quad Qishu Yan
\thanks{Corresponding author. School of Science,
Hebei University of Technology, Tianjin 300400, China. e-mail:
yanqishu@whu.edu.cn.This work was supported by the
National Natural Science Foundation of China under grant 11701138.}}

\date{}
\maketitle
\begin{abstract}
This paper studies  a kind of minimal time control problems related to the exact
synchronization for  a controlled  linear system of parabolic equations.
Each problem depends on two parameters: the bound of controls
and the initial state.
The purpose of such a problem is to find a control (from a constraint set) synchronizing components of the corresponding solution vector for the controlled system in the shortest time.
In this paper,  we
build up a necessary and sufficient condition for the optimal
time and the optimal control; we also obtain how the existence  of optimal controls
depends on the above mentioned two parameters.
\end{abstract}

\medskip

\noindent\textbf{2010 Mathematics Subject Classifications.} 49K20, 93B05, 93B07, 93C20

\medskip

\noindent\textbf{Keywords.} minimal time control, exact synchronization, minimal norm control, parabolic system

\section{Introduction}

\subsection{Synchronization and control}

Synchronization is a widespread natural phenomenon. For instance, thousands of fireflies may twinkle at the same time;
field crickets give out a unanimous cry; audiences in the theater can applaud with a rhythmic beat;
and pacemaker cells of the heart function simultaneously (see \cite{Li-Rao:1}, \cite{Li-Rao:2},
\cite{Li-Rao:3} and the references therein). The phenomenon of synchronization was first observed by Huygens in
1665 (see \cite{Huygens}). The theoretical studies on synchronization phenomena from
mathematical perspective were started by  Wiener in the 1950s (see \cite{Wiener}).
The previous studies focused on systems described by ODEs such as
\begin{equation*}
\frac{d \bm X_i}{dt}=\bm f(\bm X_i,t)+\sum_{j=1}^N A_{ij}\bm X_j,\;\;\;i=1,\dots,N,
\end{equation*}
where $\bm X_i (i=1,\dots,N)$ are $n$-dimensional state vectors, $A_{ij} (1\leq i,j\leq N)$ are
$n\times n$ coupling matrices, and $\bm f(\bm X,t)$ is an $n$-dimensional function independent of
$i$. If for any given initial data $\bm X_i(0)=\bm X_i^{(0)} (1\leq i\leq N)$,
the solution $(\bm X_1,\dots,\bm X_N)$ to the system satisfies
\begin{equation*}
\bm X_i(t)-\bm X_j(t)\rightarrow \bm 0\;\;(1\leq i,j\leq N)\;\;\mbox{as}\;\;t\rightarrow +\infty,
\end{equation*}
then we say that the system possesses the synchronization (see \cite{Wu}, \cite{Li-Rao:3}, \cite{Li-Yang}
 and the references therein ).

Mathematically, the exact synchronization for a controlled system is to ask for a control
 so that the difference of any two  components of  the corresponding solution to the system
 (with an initial state) takes value zero at a fixed time and remains the value zero
 after the aforementioned fixed time. The exact synchronization in the PDEs case
 was first studied for a coupled system of wave equations both for the higher-dimensional case in the
 framework of weak solutions in \cite{Li-Rao:4}, \cite{Li-Rao:1} and \cite{Li-Rao:5}, and
 for the one-dimensional case in the framework of classical solutions in \cite{Hu} and \cite{Li-Rao:2}.
 Recently, Pontryagin's maximum principle of optimal control problems for the exact synchronization of parabolic
systems was studied in \cite{Wang-Yan}.

Minimal time  control for the exact synchronization of controlled systems is another interesting topic. It is to ask for a control from a constraint set so that the difference
of any two  components of  the corresponding solution to the system (with an initial state) takes value zero in the shortest time and remains the value zero after the aforementioned shortest time. It is a kind of time optimal control problem. To the best of our knowledge, such problem has not been touched upon. This paper studies a minimal time  control problem for the exact synchronization of some parabolic systems.

\subsection{Formulation of the problem}

This subsection formulates the problem  studied in this paper.
We begin with introducing the controlled system. Let
 $\Omega\subseteq \mathbb R^d$ (with $d\geq 1$) be a bounded domain
with a $C^2$ boundary $\partial \Omega$. Let $\omega\subseteq \Omega$ be an open and nonempty subset with its characteristic function
$\chi_\omega$.
Let $A\triangleq (a_{ij})_{1\leq i, j\leq n}\in \mathbb R^{n\times n}$ and
$B\triangleq (b_{ij})_{1\leq i\leq n, 1\leq j\leq m}\in \mathbb R^{n\times m}$ be two constant matrices,
where $n\geq 2$ and $m\geq 1$. Let $\bm y_0\in L^2(\Omega)^n$. Consider the controlled linear parabolic system:
\begin{equation}\label{Intro:1}
\left\{
\begin{array}{lll}
\bm y_t-\Delta \bm y+A\bm y=\chi_\omega B\bm u&\mbox{in}&\Omega\times (0,+\infty),\\
\bm y=\bm 0&\mbox{on}&\partial \Omega\times(0,+\infty),\\
\bm y(0)=\bm y_0&\mbox{in}&\Omega,
\end{array}
\right.
\end{equation}
where $\bm u\in L^2(0,+\infty; L^2(\Omega)^m)$ is a control.
Write
$$\bm y(t;\bm y_0,\bm u)=(y_1(t;\bm y_0,\bm u),y_2(t;\bm y_0,\bm u),\dots,y_n(t;\bm y_0,\bm u))^\top
$$
 for the solution of the system (\ref{Intro:1}).
 (Here and throughout this paper, we denote the transposition of a matrix $J$ by $J^\top$.)
   It is well known that for each $T>0$, $\bm y(\cdot;\bm y_0,\bm u)\in W^{1,2}(0,T;H^{-1}(\Omega)^n)
\cap L^2(0,T;H_0^1(\Omega)^n)\subseteq C([0,T];L^2(\Omega)^n)$.
We will treat this solution
as a function from $[0,+\infty)$
to $L^2(\Omega)^n$.

We next define  control constraint set $\mathcal{U}_M$ (with $M>0$) and the target set $S$ as follows:
\begin{equation*}
\mathcal{U}_M\triangleq \left\{\bm u\in L^2(0,+\infty;L^2(\Omega)^m)\; :\;
\|\bm u\|_{L^2(0,+\infty;L^2(\Omega)^m)}\leq M\right\};
\end{equation*}
\begin{equation*}
S\triangleq \{(y_1,y_2,\dots,y_n)^\top\in L^2(\Omega)^n: y_1=y_2=\dots=y_n\}.
\end{equation*}

Given $M>0$, $\bm y_0\in L^2(\Omega)^n$, we define  the  minimal time  control problem
$(TP)_M^{\bm y_0}$:
\begin{equation*}
T(M,\bm y_0)\triangleq\mbox{inf}_{\bm u\in \mathcal{U}_M}\{T\geq 0\;:\;\bm u(\cdot)=\bm 0
\;\;\mbox{and}\;\;\bm y(\cdot;\bm y_0,\bm u)\in S\;\mbox{over}\; [T,+\infty)\}.
\end{equation*}
About Problem $(TP)_M^{\bm y_0}$, several notes are given in order:
\begin{description}
\item[($a_1$)]
    We call $T(M,\bm y_0)$ the optimal time; we call $\bm u\in \mathcal{U}_M$
    an admissible control if there is $T\geq 0$ so that  $\bm u(\cdot)=\bm 0$ and
    $\bm y(\cdot;\bm y_0,\bm u)\in S$ over $[T,+\infty)$; we call $\bm u^*\in \mathcal{U}_M$
    an optimal control if $T(M,\bm y_0)<+\infty, \bm u^*(\cdot)=\bm 0$ and
    $\bm y(\cdot;\bm y_0,\bm u^*)\in S$ over $[T(M,\bm y_0),+\infty)$; we agree that
    $T(M,\bm y_0)=+\infty$ if the problem $(TP)_M^{\bm y_0}$ has no admissible control.

 \item[($a_2$)] One can easily check that if $\bm y=(y_1,y_2,\dots,y_n)^\top\in L^2(\Omega)^n$,
 then $\bm y\in S$ if and only if $D\bm y=\bm 0$, where and throughout this paper,
     \begin{equation}\label{wang1.3}
D\triangleq\left(
          \begin{array}{ccccc}
            1 & -1 & 0  &\cdots  & 0 \\
            0 & 1  & -1 &\cdots  & 0\\
            \vdots & \vdots & \ddots & \ddots &\vdots \\
            0 & 0  &    \cdots&1& -1 \\
          \end{array}
        \right)_{(n-1)\times n}.
\end{equation}

\item[$(a_3)$] Differing from a general minimal time control problem,
our problem here is to ask for a control (from the constraint set)
not only driving the corresponding solution to the target $S$ at the shortest time,
but also remaining the solution in $S$  after the shortest time with the null control.
This arises from the characteristic of the exact synchronization.
When the target is an equilibrium solution of the system with the null control,
this can be done by taking the null control after the shortest time. However,
the elements in $S$ may not be equilibrium solutions. Thus, we need some reasonable assumptions to fit it.

    \item[($a_4$)] Two concepts related to this problem are the null controllability
    (see \cite{Fernandez}) and
   the exact synchronization (see \cite{Li-Rao:4}). Let us recall them: First,
   the system (\ref{Intro:1}) is said to be null controllable at time $T$, if for any $\bm y_0\in L^2(\Omega)^n$,
there exists a control $\bm u\in L^2(0,+\infty;L^2(\Omega)^m)$, with $\bm u=\bm 0$ over $(T,+\infty)$,
so that $\bm y(T;\bm y_0,\bm u)=\bm 0$. Second, the system (\ref{Intro:1}) is said to be
exactly synchronizable at time $T$, if for any $\bm y_0\in L^2(\Omega)^n$,
there exists a control $\bm u\in L^2(0,+\infty;L^2(\Omega)^m)$, with $\bm u=\bm 0$ over $(T,+\infty)$,
so that
\begin{equation*}
y_1(t;\bm y_0,\bm u)=y_2(t;\bm y_0,\bm u)=\cdots=y_n(t;\bm y_0,\bm  u)\;\;\mbox{for all}\;\;t\geq T.
\end{equation*}
 Mathematically, the exact synchronization is weaker than the
null controllability.

\end{description}

\subsection {Aim, motivation and hypotheses}

{\bf Aim} First, we will answer the question: Given $(\bm y_0,M)\in L^2(\Omega)^n\times (0,+\infty)$, does the problem $(TP)_M^{\bm y_0}$ has an optimal control? Second,
we are going to   characterize the optimal time and the optimal control
to the problem $(TP)_M^{\bm y_0}$.

\vskip 5pt

\noindent {\bf Motivation} As we have explained before, the minimal time  control problem
for the exact synchronization of controlled systems is an interesting topic
and has not been touched upon. In the problem $(TP)_M^{\bm y_0}$,
the optimal time and the optimal control are two of the most important quantities.
It is not an easy job to characterize them.
In most papers concerning time optimal control problems,
people can only provide a necessary condition for the optimal control,
i.e., Pontryagin's maximum principle (see, for instance, \cite{Barbu}, \cite{Kunisch-Wang1}
and \cite{Li-Yong}).
In \cite{Wang-Zuazua}  and \cite{Kunisch}, the authors gave characteristics for the optimal time
and the optimal control for a minimal time control problem,
where the controlled system is the heat equation and the target is the origin (in the state space)
which is an equilibrium solution of the heat equation.
Since the target set $S$ differs from the target in \cite{Wang-Zuazua} and \cite{Kunisch},
we cannot directly use the way in \cite{Wang-Zuazua} and  \cite{Kunisch}
to get a necessary and sufficient condition for the optimal time and the optimal control for the problem $(TP)_M^{\bm y_0}$.
These motivate us to do this research.

\vskip 5pt

\noindent{\bf Hypotheses} Our main theorems are based on one of the following  two hypotheses.
\begin{itemize}
  \item[$(H_1)$] The pair $(A,B)$ satisfies that
  \begin{equation}\label{wang1.5}
  \sum_{\ell=1}^n a_{i\ell}=\sum_{\ell=1}^n a_{j\ell}\;\;\mbox{for all}\;\;i, j\in \{1,2,\dots,n\};
  \end{equation}
  and that
    \begin{equation}\label{wang1.5-1}
  \mbox{rank}(DB,DAB,\dots,DA^{n-2}B)= n-1.
  \end{equation}
  (Recall that $D$ is given by (\ref{wang1.3}).)
  \item[$(H_2)$] The pair $(A,B)$ satisfies that
  \begin{equation}\label{wyuanyuan1.7}
  \sum_{\ell=1}^n a_{i_0 \ell}\neq \sum_{\ell=1}^n a_{j_0 \ell}\;\;\mbox{for some}\;\;
  i_0, j_0\in \{1,2,\dots,n\};
  \end{equation}
  and that
    \begin{equation}\label{wyuanyuan1.8}
  \mbox{rank}(B,AB,\dots,A^{n-1}B)=n.
  \end{equation}
\end{itemize}
Several remarks on these hypotheses are given in order.
\begin{description}
\item[$(b_1)$] One can easily see that $(H_1)$ differs from $(H_2)$.
\item[$(b_2)$] We call (\ref{wang1.5}) (or (\ref{wyuanyuan1.7})) the  row condition; and call
(\ref{wang1.5-1}) (or (\ref{wyuanyuan1.8})) the  rank condition.
\item[($b_3$)] (\ref{wang1.5}) is equivalent to that
(see \cite{Li-Rao:3} and \cite{Li-Rao-Wei}) there exists a unique matrix
$\widetilde A\in \mathbb R^{(n-1)\times (n-1)}$ so that
\begin{equation}\label{wang1.6}
DA=\widetilde A D.
\end{equation}

\item[($b_4$)] There is a pair $(A,B)$ satisfying $(H_1)$. For example,
\begin{equation*}
A=\left(
              \begin{array}{cc}
                1 & 0 \\
                0.5 & 0.5 \\
              \end{array}
            \right),
B=\left(
              \begin{array}{c}
                0 \\
                1 \\
              \end{array}
            \right).
\end{equation*}
\item[($b_5$)] There is a pair $(A,B)$ satisfying  $(H_2)$. For example,
\begin{equation*}
A=\left(
              \begin{array}{cc}
                1 & 2 \\
                3 & 4 \\
              \end{array}
            \right),
B=\left(
              \begin{array}{c}
                0 \\
                1 \\
              \end{array}
            \right).
\end{equation*}
\item[($b_6$)] We proved in this paper
that the system (\ref{Intro:1}) is exactly synchronizable at time $T$ if and only if
$(A, B)$ satisfies either $(H_1)$ or $(H_2)$ (see Remark~\ref{yuanyuanremark2.2}).
Hence, our main theorems are based on $(H_1)$ and $(H_2)$.  In \cite{Wang-Yan}, another necessary and
sufficient condition for the exact synchronization of the system (\ref{Intro:1}) has been proved.
The difference between \cite{Wang-Yan} and this paper is as follows: In \cite{Wang-Yan}, the research is carried out  from the perspective of rank condition: rank($B,AB,\dots,A^{n-1}B)=n$ or
rank($B,AB,\dots,A^{n-1}B)\not=n$, while in this paper, it is carried out from
the perspective of the row condition:  (\ref{wang1.5}) or (\ref{wyuanyuan1.7}).
It deserves mentioning that necessary and sufficient conditions on the
synchronization was obtained for controlled systems of wave equations in \cite{Li-Rao:4}.

\end{description}

\subsection{Plan of the paper}
In section 2, we present the main results of this paper.
 In section 3, we give some preliminaries which contain  the exact synchronization for the controlled system of heat equations and  transformations for the minimal time control problem.  In section
4, we present some properties on a minimal norm control problem.
In the final section, we prove the main results of this paper.
Throughout this paper, $C(\cdot)$ denotes a generic positive constant, which depends on
what are enclosed in the bracket.

\section{Main results}
Our main results will be given by  two theorems. To state them, we need to 
introduce one kind of minimal norm control problems and two kinds of functionals under either $(H_1)$ or $(H_2)$.

\noindent {\bf Minimal norm control problem}
Given $T>0$ and $\bm y_0\in L^2(\Omega)^n$, define the minimal norm control problem
 $(NP)_T^{\bm y_0}$ in the following manner:
\begin{equation*}
N(T,\bm y_0)\triangleq \inf\{\|\bm v\|_{L^2(0,+\infty;L^2(\Omega)^m)}: \bm v(\cdot)=\bm 0
\;\,\mbox{and}\;\,\bm y(\cdot;\bm y_0,\bm v)\in S\;\,\mbox{over}\,\;[T,+\infty)\}.
\end{equation*}

Several notes on the problem  $(NP)_T^{\bm y_0}$ are given in order.

\begin{description}
\item[($c_1$)] We call $N(T,\bm y_0)$  the minimal norm;
we call $\bm v\in L^2(0,+\infty;L^2(\Omega)^m)$ an admissible control if $\bm v(\cdot)=\bm 0$ and
 $\bm y(\cdot;\bm y_0,\bm v)\in S$ over $[T, +\infty)$; we call a function $\bm v^*$
 an optimal control if it is admissible and satisfies that $\|\bm v^*\|_{L^2(0,+\infty;L^2(\Omega)^m)}=N(T,\bm y_0)$.

 \item[($c_2$)] Given $\bm y_0\in L^2(\Omega)^n$, we can  treat $N(\cdot,\bm y_0)$
 as a function of the time variable. We have proved  that if either $(H_1)$ or $(H_2)$ holds,
 then for each $\bm y_0\in L^2(\Omega)^n$,  $\lim _{T\rightarrow+\infty}N(T,\bm y_0)$
exists (see Proposition~\ref{Norm:7}). Thus, under either $(H_1)$ or $(H_2)$,  we can let
\begin{equation}\label{Intro:9}
M(\bm y_0)\triangleq \lim_{T\rightarrow +\infty} N(T,\bm y_0)\;\;\mbox{for each}\;\;\bm y_0\in L^2(\Omega)^n.
\end{equation}

\item[($c_3$)] If either $(H_1)$ or $(H_2)$ holds, then for any $T>0$ and $\bm y_0\in L^2(\Omega)^n$, the problem $(NP)_T^{\bm y_0}$ has a unique optimal control
    (see Theorem \ref{Norm:6}).

\end{description}

\noindent {\bf Two auxiliary functionals }  The first functional is built up (under the assumption $(H_1)$)  in the following manner:
 Recall the note $(b_3)$ for the matrix $\widetilde A$.
Let  $T>0$ and let $\bm y_0\in L^2(\Omega)^n$. Write  $\bm \psi(\cdot;T,\bm \psi_T)$,
with $\bm \psi_T\in L^2(\Omega)^{n-1}$,
for the solution to the  system:
\begin{equation}\label{Intro:11}
\left\{
\begin{array}{lll}
\bm \psi_t+\Delta \bm \psi-\widetilde A^\top \bm \psi=\bm 0&\mbox{in}&\Omega\times (0,T),\\
\bm \psi=\bm 0&\mbox{on}&\partial\Omega\times (0,T),
\end{array}\right.
\end{equation}
with the initial condition $\bm \psi(T)=\bm \psi_T$.
 Construct two subspaces:
\begin{equation}\label{Intro:12}
X_{T,1}\triangleq\{\chi_\omega B^\top D^\top \bm \psi(\cdot;T,\bm \psi_T): \bm \psi_T\in L^2(\Omega)^{n-1}\}
\;\;\mbox{and}\;\;Y_{T,1}\triangleq \overline{X_{T,1}}^{\|\cdot\|_{L^2(0,T;L^2(\omega)^m)}}.
\end{equation}
We can characterize elements in the space  $Y_{T,1}$  (see (i) of Lemma~\ref{Norm:2}).
In fact, each element in $Y_{T,1}$  can be expressed as
$\chi_\omega B^\top D^\top \bm \psi$, where $\bm \psi\in C([0,T);L^2(\Omega)^{n-1})$ solves (\ref{Intro:11})
and satisfies that
$\chi_\omega B^\top D^\top \bm \psi(\cdot)=\lim_{i\rightarrow +\infty}\chi_\omega B^\top D^\top \bm \psi(\cdot;T,\bm z_i)$
for some sequence $\{\bm z_i\}_{i\geq 1}\subseteq L^2(\Omega)^{n-1}$,
where the limit is taken in  $L^2(0,T;L^2(\omega)^m)$.
Define the first  functional $J_{T,1}^{\bm y_0}: Y_{T,1}\rightarrow \mathbb{R}$ by
\begin{equation}\label{Intro:13}
J_{T,1}^{\bm y_0}(\chi_\omega B^\top D^\top \bm \psi)\triangleq
\frac{1}{2}\int_0^T\|\chi_\omega B^\top D^\top \bm \psi\|_{L^2(\omega)^m}^2\mathrm dt
+\langle \bm \psi(0), D\bm y_0\rangle_{L^2(\Omega)^{n-1}}
\end{equation}
for each $\chi_\omega B^\top D^\top \bm \psi\in Y_{T,1}$.

The second functional  is defined (under the assumption $(H_2)$) in the following manner:
Let $T>0$ and let $\bm y_0\in L^2(\Omega)^n$. Write $\bm \varphi(\cdot;T,\bm \varphi_T)$, with  $\bm \varphi_T\in L^2(\Omega)^n$,
for the solution to the  system:
\begin{equation}\label{Intro:14}
\left\{
\begin{array}{lll}
\bm \varphi_t+\Delta \bm \varphi-A^\top \bm \varphi=\bm 0&\mbox{in}&\Omega\times (0,T),\\
\bm \varphi=\bm 0&\mbox{on}&\partial\Omega\times (0,T),
\end{array}\right.
\end{equation}
with the initial condition $\bm \varphi(T)=\bm \varphi_T$.
Build up two subspaces:
\begin{equation}\label{Intro:15}
X_{T,2}\triangleq\{\chi_\omega B^\top \bm \varphi(\cdot;T,\bm \varphi_T): \bm \varphi_T\in L^2(\Omega)^n\}
\;\;\mbox{and}\;\;Y_{T,2}\triangleq \overline{X_{T,2}}^{\|\cdot\|_{L^2(0,T;L^2(\omega)^m)}}.
\end{equation}
We can also characterize elements in the space  $Y_{T,2}$  (see (ii) of Lemma~\ref{Norm:2}).
Indeed,  each element in $Y_{T,2}$ can be expressed as
$\chi_\omega B^\top  \bm \varphi$, where $\bm \varphi\in C([0,T);L^2(\Omega)^{n})$ solves (\ref{Intro:14}) and satisfies that
 $\chi_\omega B^\top  \bm \varphi(\cdot)=\lim_{i\rightarrow +\infty}\chi_\omega B^\top \bm \varphi(\cdot;T,\bm z_i)$
 for some sequence $\{\bm z_i\}_{i\geq 1}\subseteq L^2(\Omega)^{n}$, where the limit is taken in  $L^2(0,T;L^2(\omega)^m)$.
Define the second functional  $J_{T,2}^{\bm y_0}: Y_{T,2}\rightarrow \mathbb{R}$ by
\begin{equation}\label{Intro:16}
J_{T,2}^{\bm y_0}(\chi_\omega B^\top \bm \varphi)\triangleq
 \frac{1}{2}\int_0^T\|\chi_\omega B^\top \bm \varphi\|_{L^2(\omega)^m}^2\mathrm dt
+\langle \bm \varphi(0), \bm y_0\rangle_{L^2(\Omega)^n}
\end{equation}
for each $\chi_\omega B^\top \bm \varphi\in Y_{T,2}$.

Several notes on these two functionals are given in order.
\begin{description}
\item[($d_1$)] The functional $J_{T,1}^{\bm y_0}$
has the following properties: First, it
is well defined on $Y_{T,1}$ (see (i) of Corollary~\ref{20180205-1}); Second, it
has a unique nontrivial minimizer  in $Y_{T,1}$
when $\bm y_0\not\in S$ (see (i) of Lemma~\ref{Norm:5}).
\item[($d_2$)] The functional $J_{T,2}^{\bm y_0}$ has the following properties:
First, it is well defined on $Y_{T,2}$
(see (ii) of Corollary~\ref{20180205-1});
Second, it has a unique nontrivial minimizer in $Y_{T,2}$  when $\bm y_0\not=\bm 0$ (see (ii) of Lemma~\ref{Norm:5}).
\end{description}

Recall (\ref{Intro:9}),
(\ref{wang1.3}), (\ref{Intro:12}), (\ref{Intro:13}),
(\ref{Intro:15}),
(\ref{Intro:16}), and notes $(d_1)$ and $(d_2)$. The  main theorems  of this paper are as follows:
\begin{Theorem}\label{Intro:17-1}
Suppose that $(H_1)$ holds. Let  $\bm y_0\in L^2(\Omega)^n$  and let $M>0$.
The following conclusions are true:
\begin{itemize}
  \item[$(i)$] If  $\bm y_0\in S$, then $(TP)_M^{\bm y_0}$
  has the unique optimal control $\bm 0$ (while $0$ is the optimal time);
    If $\bm y_0\not\in S$ and
  $M\leq M(\bm y_0)$, then $(TP)_M^{\bm y_0}$ has no optimal control;
  If  $\bm y_0\not\in S$ and
  $M>M(\bm y_0)$, then $(TP)_M^{\bm y_0}$ has a unique optimal control which is nontrivial.
  \item[$(ii)$]  If $\bm y_0\not\in S$ and $M>M(\bm y_0)$, then
    $T^*$ and $\bm u^*$ are the optimal time and the optimal control to
$(TP)_M^{\bm y_0}$ if and only if
  \begin{equation}\label{Intro:18}
  M=\Big(\int_0^{T^*} \|\chi_\omega B^\top D^\top \bm \psi^*(t)\|_{L^2(\omega)^m}^2\mathrm dt\Big)^{\frac{1}{2}}
  \end{equation}
 and
 \begin{equation}\label{Intro:19}
 \bm u^*(t)\triangleq \left\{
 \begin{array}{ll}
 \chi_\omega B^\top D^\top \bm \psi^*(t),&t\in (0,T^*),\\
 \bm 0,&t\geq T^*,
 \end{array}\right.
 \end{equation}
 where $\chi_\omega B^\top D^\top \bm \psi^*$,
  with $\bm \psi^*\in C([0,T^*);L^2(\Omega)^{n-1})$ solving (\ref{Intro:11}),
  is the unique minimizer of $J_{T^*,1}^{\bm y_0}$ over $Y_{T^*,1}$.
\end{itemize}
\end{Theorem}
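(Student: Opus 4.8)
The plan is to reduce the exact‑synchronization problem to a null controllability problem and then transfer the known minimal‑time/minimal‑norm duality to the present setting. First I would exploit $(H_1)$: by note $(b_3)$ one has $DA=\widetilde AD$, so $\bm z\triangleq D\bm y$ solves the $(n-1)$‑component system $\bm z_t-\Delta\bm z+\widetilde A\bm z=\chi_\omega DB\bm u$ with $\bm z(0)=D\bm y_0$, and by note $(a_2)$, $\bm y(t)\in S$ iff $\bm z(t)=\bm 0$. Iterating the intertwining relation gives $DA^kB=\widetilde A^kDB$, so the rank condition (\ref{wang1.5-1}) is exactly the Kalman condition for $(\widetilde A,DB)$; combined with $\omega\neq\emptyset$ this yields (via the preliminaries of Section~3) null controllability of the reduced system at every $T>0$. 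Hence $(TP)_M^{\bm y_0}$ becomes the minimal‑time null‑control problem for the reduced system with data $D\bm y_0$, and $(NP)_T^{\bm y_0}$ the corresponding minimal‑norm problem. Comparing admissible controls of the two problems — the restriction to $(0,T)$ of a $(TP)$‑admissible control vanishing after $T$ is $(NP)_T^{\bm y_0}$‑admissible of norm $\leq M$, and conversely the unique $(NP)_T^{\bm y_0}$‑optimal control (note $(c_3)$) extended by $\bm 0$ is $(TP)_M^{\bm y_0}$‑admissible whenever $N(T,\bm y_0)\leq M$ — I would record the identity $T(M,\bm y_0)=\inf\{T\geq 0:\ N(T,\bm y_0)\leq M\}$, which is the backbone of everything that follows.

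For part $(i)$ I would invoke the properties of $N(\cdot,\bm y_0)$ established in Section~4 (see Proposition~\ref{Norm:7} and the results near it): it is nonincreasing on $[0,+\infty)$, continuous on $(0,+\infty)$, tends to $M(\bm y_0)$ as $T\to+\infty$ by (\ref{Intro:9}), and, when $\bm y_0\notin S$, is strictly decreasing with $N(T,\bm y_0)\to+\infty$ as $T\to0^+$. If $\bm y_0\in S$, then $D\bm y_0=\bm 0$, so $\bm z\equiv\bm 0$ for $\bm u=\bm 0$; hence $0$ is the optimal time and, an optimal control being forced to vanish on all of $[0,+\infty)$, $\bm 0$ is the unique optimal control. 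If $\bm y_0\notin S$ and $M\leq M(\bm y_0)$, strict monotonicity gives $N(T,\bm y_0)>M(\bm y_0)\geq M$ for every finite $T$, so the backbone infimum is over the empty set, $T(M,\bm y_0)=+\infty$, and there is no admissible, hence no optimal, control. If $\bm y_0\notin S$ and $M>M(\bm y_0)$, the intermediate value theorem and strict monotonicity produce a unique $T^*\in(0,+\infty)$ with $N(T^*,\bm y_0)=M$; by the backbone identity $T(M,\bm y_0)=T^*$, the extension‑by‑zero of the $(NP)_{T^*}^{\bm y_0}$‑optimal control is an optimal control, and any optimal control must restrict on $(0,T^*)$ to that control (its restriction being $(NP)_{T^*}^{\bm y_0}$‑admissible of norm $\leq M=N(T^*,\bm y_0)$) and vanish afterwards — which also gives uniqueness and nontriviality.

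For part $(ii)$, with $T^*$ as above, Theorem~\ref{Norm:6} and (i) of Lemma~\ref{Norm:5} identify the unique $(NP)_{T^*}^{\bm y_0}$‑optimal control as $\chi_\omega B^\top D^\top\bm\psi^*$ on $(0,T^*)$, with $\bm\psi^*$ the unique nontrivial minimizer of $J_{T^*,1}^{\bm y_0}$ over $Y_{T^*,1}$, and yield $N(T^*,\bm y_0)^2=\int_0^{T^*}\|\chi_\omega B^\top D^\top\bm\psi^*\|_{L^2(\omega)^m}^2\,\mathrm dt$. The forward implication then follows from part $(i)$: $\bm u^*$ restricts to this control and vanishes after $T^*$, which is (\ref{Intro:19}), while $M=N(T^*,\bm y_0)$ together with the displayed identity is (\ref{Intro:18}). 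For the converse, (\ref{Intro:18}) and that identity give $M=N(T^*,\bm y_0)$, so $T^*=T(M,\bm y_0)$ by the strict monotonicity of $N(\cdot,\bm y_0)$ and the backbone identity, while (\ref{Intro:19}) describes exactly the extension‑by‑zero of the $(NP)_{T^*}^{\bm y_0}$‑optimal control, certified as the optimal control of $(TP)_M^{\bm y_0}$ by part $(i)$.

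The part I expect to be delicate is not the bookkeeping above but the inputs from Section~4: the continuity and, above all, the strict monotonicity of $T\mapsto N(T,\bm y_0)$ when $\bm y_0\notin S$ (which underlies both the uniqueness of $T^*$ and the non‑existence of optimal controls for $M\leq M(\bm y_0)$), together with the rigidity statement that a $(TP)_M^{\bm y_0}$‑optimal control must coincide on $(0,T^*)$ with the $(NP)_{T^*}^{\bm y_0}$‑optimal control and hence vanish thereafter. These are precisely where the target set $S$, rather than a single equilibrium, and the reduction through $D$ must be handled with care; granting them, the theorem reduces to the reasoning sketched here.
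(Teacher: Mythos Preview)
Your overall route---reducing $(TP)_M^{\bm y_0}$ to a null-control problem for $\bm z=D\bm y$ via $(H_1)$, then exploiting the minimal-time/minimal-norm duality through the ``backbone identity'' $T(M,\bm y_0)=\inf\{T:N(T,\bm y_0)\leq M\}$---is exactly the paper's strategy, and your treatment of the cases $\bm y_0\in S$ and $M\leq M(\bm y_0)$ matches the paper's.

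There is, however, a genuine gap in the case $\bm y_0\notin S$, $M>M(\bm y_0)$. You assert that $N(\cdot,\bm y_0)$ is \emph{continuous} and then invoke the intermediate value theorem to produce $T^*$ with $N(T^*,\bm y_0)=M$; your uniqueness argument for the optimal control then hinges on this equality. But Proposition~\ref{Norm:7} establishes only \emph{right} continuity, and strict monotonicity together with right continuity does not rule out left jumps: from the backbone identity and right continuity you get only $N(T(M,\bm y_0),\bm y_0)\leq M$, not equality. If $N(T^*,\bm y_0)<M$, your uniqueness argument collapses, because an optimal $(TP)_M^{\bm y_0}$ control restricted to $(0,T^*)$ is then merely $(NP)_{T^*}^{\bm y_0}$-admissible of norm $\leq M$, not $\leq N(T^*,\bm y_0)$, so you cannot conclude it coincides with the $(NP)_{T^*}^{\bm y_0}$-optimal control.

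The paper closes this gap by a different mechanism. It first obtains existence of an optimal control by a direct compactness/weak-limit argument on a minimizing sequence (Step~1 of Proposition~\ref{Time:1}), and then proves the bang-bang property $\|\bm u^*\|_{L^2}=M$ \emph{independently of any continuity of $N$} by a perturbation/time-shift argument (Step~2 of Proposition~\ref{Time:1}): if $\|\bm u^*\|<M$, one uses the null controllability of the reduced system on $(0,\widetilde T(M,\bm y_0)/2)$ to correct the small error $D\bm y_0-\bm z^*(\delta)$ at a cost controlled by the observability constant, thereby producing an admissible control reaching $\bm 0$ at time $\widetilde T(M,\bm y_0)-\delta$ with norm still $\leq M$, contradicting optimality. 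Only after $\|\bm u^*\|=M$ is established does the paper deduce uniqueness (parallelogram law), then strict monotonicity of $T(\cdot,\bm y_0)$ (Corollary~\ref{Time:4}), and finally $N(T(M,\bm y_0),\bm y_0)=M$ (Proposition~\ref{Equivalence:1}). Your sketch must either supply an independent proof of left continuity of $N(\cdot,\bm y_0)$ or import this perturbation step; you flag continuity as delicate in your final paragraph, but the body of the argument uses it as though it were already available.
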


\begin{Theorem}\label{Yuanyuantheorem2.2}
Suppose that $(H_2)$ holds. Let $\bm y_0\in L^2(\Omega)^n$  and  let $M>0$. The following conclusions are true:
\begin{itemize}
  \item[$(i)$]
 If $\bm y_0=\bm 0$, then $(TP)_M^{\bm y_0}$
  has the unique optimal control $\bm 0$ (while $0$ is the optimal time);
 If $\bm y_0\neq\bm 0$ and $M\leq M(\bm y_0)$, then $(TP)_M^{\bm y_0}$ has no optimal control;
  If $\bm y_0\neq\bm 0$ and $M>M(\bm y_0)$, then $(TP)_M^{\bm y_0}$ has a unique optimal control which is nontrivial.
  \item[$(ii)$] If $\bm y_0\not=\bm 0$ and $M>M(\bm y_0)$, then
    $T^*$ and $\bm u^*$ are the optimal time and the optimal control to
$(TP)_M^{\bm y_0}$ if and only if
  \begin{equation*}
  M=\Big(\int_0^{T^*} \|\chi_\omega B^\top \bm \varphi^*(t)\|_{L^2(\omega)^m}^2\mathrm dt
  \Big)^{\frac{1}{2}}
  \end{equation*}
 and
 \begin{equation*}
 \bm u^*(t)\triangleq \left\{
 \begin{array}{ll}
 \chi_\omega B^\top \bm \varphi^*(t),&t\in (0,T^*),\\
 \bm 0,&t\geq T^*,
 \end{array}\right.
 \end{equation*}
 where $\chi_\omega B^\top \bm \varphi^*$,
  with $\bm \varphi^*\in C([0,T^*);L^2(\Omega)^{n})$ solving (\ref{Intro:14}),
  is the unique minimizer of $J_{T^*,2}^{\bm y_0}$
 over $Y_{T^*,2}$.
  \end{itemize}
\end{Theorem}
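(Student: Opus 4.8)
The plan is to reduce $(TP)_M^{\bm y_0}$, under $(H_2)$, to a classical minimal time null‑controllability problem and then exploit its duality with $(NP)_T^{\bm y_0}$ and the functional $J_{T,2}^{\bm y_0}$. The first observation (this is part of the transformations of Section~3) is that under $(H_2)$ the target set $S$ effectively collapses to the origin: if $\bm u(\cdot)=\bm 0$ and $\bm y(\cdot;\bm y_0,\bm u)\in S$ on $[T,+\infty)$, then on that interval $\bm y$ solves $\bm y_t-\Delta\bm y+A\bm y=\bm 0$ with $D\bm y\equiv\bm 0$; differentiating $D\bm y\equiv\bm 0$ in $t$ and inserting the equation gives $DA\bm y\equiv\bm 0$, and since $D\bm y\equiv\bm 0$ means $\bm y(t)=z(t)(1,\dots,1)^\top$, we get $z(t)\,DA(1,\dots,1)^\top\equiv\bm 0$. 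By the row condition in $(H_2)$ the row sums of $A$ are not all equal, i.e. $DA(1,\dots,1)^\top\neq\bm 0$, so $z\equiv 0$ and $\bm y\equiv\bm 0$ on $[T,+\infty)$; conversely $\bm y(T;\bm y_0,\bm u)=\bm 0$ together with $\bm u=\bm 0$ on $[T,+\infty)$ keeps $\bm y$ at $\bm 0\in S$. Hence $\bm u$ is admissible for $(TP)_M^{\bm y_0}$ with time $T$ if and only if $\bm u\in\mathcal U_M$, $\bm u=\bm 0$ on $[T,+\infty)$ and $\bm y(T;\bm y_0,\bm u)=\bm 0$. In particular, if $\bm y_0=\bm 0$ then $\bm u=\bm 0$ is admissible with time $0$, so $T(M,\bm 0)=0$ and any optimal control must vanish on $[0,+\infty)$, giving the unique optimal control $\bm 0$; if $\bm y_0\neq\bm 0$ then $N(T,\bm y_0)>0$ for every $T>0$.

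For part (i) with $\bm y_0\neq\bm 0$, I would use the properties of $T\mapsto N(T,\bm y_0)$ from Section~4 (Proposition~\ref{Norm:7} and the accompanying monotonicity/continuity lemmas): $N(\cdot,\bm y_0)$ is continuous and strictly decreasing on $(0,+\infty)$ with $\lim_{T\to+\infty}N(T,\bm y_0)=M(\bm y_0)$. If $M\le M(\bm y_0)$, then $N(T,\bm y_0)>M(\bm y_0)\ge M$ for every finite $T$, so no control in $\mathcal U_M$ steers $\bm y_0$ to $\bm 0$ in any finite time; hence $T(M,\bm y_0)=+\infty$ and there is no admissible, so no optimal, control. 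If $M>M(\bm y_0)$, continuity and the limit give $T_0$ with $N(T_0,\bm y_0)<M$, so $T^*:=T(M,\bm y_0)\le T_0<+\infty$; I would take a minimizing sequence $\bm u_k\in\mathcal U_M$ with synchronization times $T_k\downarrow T^*$, extract a weak $L^2$‑limit $\bm u^*$ (with $\|\bm u^*\|\le M$ by weak lower semicontinuity), note that $\bm u_k=\bm 0$ on $[T_k,+\infty)$ forces $\bm u^*=\bm 0$ a.e. on $(T^*,+\infty)$, and use the compactness of the control‑to‑state map $\bm u\mapsto\bm y(T^*;\bm y_0,\bm u)$ together with uniform (in $k$) continuity in time of $\bm y(\cdot;\bm y_0,\bm u_k)$ to pass to the limit in $\bm y(T_k;\bm y_0,\bm u_k)=\bm 0$ and obtain $\bm y(T^*;\bm y_0,\bm u^*)=\bm 0$. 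Thus $\bm u^*$ is admissible with time $T^*$, hence optimal.

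To finish part (i) and prove part (ii), I first record that $N(T^*,\bm y_0)=M$: by definition of $T^*$ as an infimum, for every $\varepsilon>0$ there is an admissible control with time $T^*+\varepsilon$, so $N(T^*+\varepsilon,\bm y_0)\le M$, and letting $\varepsilon\downarrow 0$ continuity gives $N(T^*,\bm y_0)\le M$; if $N(T^*,\bm y_0)<M$, continuity and strict monotonicity give $N(T^*-\delta,\bm y_0)<M$ for small $\delta>0$, yielding an admissible control with time $T^*-\delta$ and contradicting minimality of $T^*$. Consequently any optimal control $\bm u^*$ of $(TP)_M^{\bm y_0}$ satisfies $N(T^*,\bm y_0)\le\|\bm u^*\|_{L^2}\le M=N(T^*,\bm y_0)$, hence $\|\bm u^*\|_{L^2}=N(T^*,\bm y_0)$, i.e. $\bm u^*$ is an optimal control of $(NP)_{T^*}^{\bm y_0}$, which is unique by Theorem~\ref{Norm:6} (note $(c_3)$); this gives the uniqueness claim in (i). For (ii), I invoke the representation of the minimal norm control via the functional $J_{T,2}^{\bm y_0}$ (note $(d_2)$, Lemma~\ref{Norm:5}, Theorem~\ref{Norm:6}): for $\bm y_0\neq\bm 0$ the unique optimal control of $(NP)_{T}^{\bm y_0}$ is $\chi_\omega B^\top\bm\varphi^*$ on $(0,T)$ and $\bm 0$ afterwards, with $\bm\varphi^*\in C([0,T);L^2(\Omega)^n)$ the unique minimizer of $J_{T,2}^{\bm y_0}$ over $Y_{T,2}$ and $\|\cdot\|_{L^2}^2=\int_0^{T}\|\chi_\omega B^\top\bm\varphi^*\|_{L^2(\omega)^m}^2\,\mathrm dt=N(T,\bm y_0)^2$. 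The forward implication then follows: if $T^*,\bm u^*$ are optimal for $(TP)_M^{\bm y_0}$, then by the above $T^*=T(M,\bm y_0)$, $N(T^*,\bm y_0)=M$, and $\bm u^*$ is the optimal control of $(NP)_{T^*}^{\bm y_0}$, which is exactly the displayed formula with $M=N(T^*,\bm y_0)=\big(\int_0^{T^*}\|\chi_\omega B^\top\bm\varphi^*\|_{L^2(\omega)^m}^2\,\mathrm dt\big)^{1/2}$. For the converse, if $\bm u^*$ is given by that formula with $\bm\varphi^*$ the minimizer of $J_{T^*,2}^{\bm y_0}$ and $M$ equals that integral, then $\bm u^*$ is the optimal control of $(NP)_{T^*}^{\bm y_0}$ with $\|\bm u^*\|_{L^2}=N(T^*,\bm y_0)=M$, so $\bm u^*\in\mathcal U_M$ is admissible for $(TP)_M^{\bm y_0}$ with time $T^*$ and $T(M,\bm y_0)\le T^*$; were $T(M,\bm y_0)<T^*$, strict monotonicity would give $N(T^*,\bm y_0)<N(T(M,\bm y_0),\bm y_0)\le M=N(T^*,\bm y_0)$, a contradiction, so $T^*=T(M,\bm y_0)$ and $\bm u^*$ is the unique optimal control by part (i).

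The genuinely nontrivial analytic content — that $T\mapsto N(T,\bm y_0)$ is continuous and strictly decreasing with limit $M(\bm y_0)$, and that the minimal norm control is represented by the minimizer of the completed functional $J_{T,2}^{\bm y_0}$ on the space $Y_{T,2}$ (which requires the characterization of $Y_{T,2}$ in Lemma~\ref{Norm:2} and unique continuation for the adjoint system \eqref{Intro:14}) — is packaged into the Section~3–4 results; granting those, the remaining obstacle is the careful bookkeeping of the minimal‑time $\leftrightarrow$ minimal‑norm correspondence, whose crux is the saturation property $\|\bm u^*\|_{L^2}=M=N(T^*,\bm y_0)$ of the time‑optimal control and which uses strict monotonicity of $N(\cdot,\bm y_0)$ in an essential way. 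Apart from this, the argument runs in parallel with the proof of Theorem~\ref{Intro:17-1}, with $A$, $\{\bm 0\}$, $n$ and $\bm\varphi$ replacing $\widetilde A$, $S$, $n-1$ and $\bm\psi$ throughout.
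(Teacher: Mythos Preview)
Your reduction of $(TP)_M^{\bm y_0}$ under $(H_2)$ to the null--controllability problem, the compactness argument for existence, and the minimal--time/minimal--norm duality you set up are all correct and match the paper's strategy (the paper itself only proves Theorem~\ref{Intro:17-1} in full and declares Theorem~\ref{Yuanyuantheorem2.2} analogous via Theorem~\ref{20180225-2}).

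There is, however, one genuine gap. In deriving the saturation identity $N(T^*,\bm y_0)=M$ you write: ``if $N(T^*,\bm y_0)<M$, continuity and strict monotonicity give $N(T^*-\delta,\bm y_0)<M$ for small $\delta>0$.'' This step uses \emph{left} continuity of $T\mapsto N(T,\bm y_0)$, but the result you cite, Proposition~\ref{Norm:7}, only establishes \emph{right} continuity and strict monotonicity. Strict monotonicity alone goes the wrong way here (it gives $N(T^*-\delta,\bm y_0)>N(T^*,\bm y_0)$), so you cannot conclude $N(T^*-\delta,\bm y_0)<M$ without additional input. Left continuity is in fact true and can be proved by the same weak--compactness argument as right continuity (take $T_k\nearrow T_0$, use that the optimal controls $\bm v_k^*$ vanish on $(T_k,+\infty)\supset(T_0,+\infty)$, and pass to the limit), but you should either supply that argument or bypass it.

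The paper bypasses it: instead of appealing to left continuity, Step~2 of the proof of Proposition~\ref{Time:1} proves $\|\bm u^*\|_{L^2}=M$ directly by a perturbation argument. Assuming $\|\bm u^*\|_{L^2}=\widetilde M<M$, one shifts the optimal trajectory back by a small $\delta$, corrects the initial mismatch $D\bm y_0-\bm z^*(\delta)$ (of size $\varepsilon$) to zero on $[0,\widetilde T(M,\bm y_0)/2]$ using the null controllability estimate from Proposition~\ref{Preli:5}, and observes that the resulting control has $L^2$ norm at most $\widetilde M+C\varepsilon\le M$ while steering to zero at time $\widetilde T(M,\bm y_0)-\delta$, contradicting optimality. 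This is the one substantive difference between your route and the paper's; both lead to the same conclusion, but your version as written relies on a property not proved in Section~4.
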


Several notes on Theorems~\ref{Intro:17-1} and \ref{Yuanyuantheorem2.2} are given in order.
\begin{Remark}\label{Intro:19-1}
\begin{itemize}
\item[$(a)$] The conclusion $(i)$  in Theorem~\ref{Intro:17-1} (or Theorem~\ref{Yuanyuantheorem2.2}) shows how the existence of  optimal controls to $(TP)_M^{\bm y_0}$ depends on   $(M,\bm y_0)\in (0,+\infty)\times
    L^2(\Omega)^n$.
  In \cite{Wang-Zhang}, the authors studied how the bang-bang property of a
  minimal time control problem
  depends on the pair  $(M,\bm y_0)$.
  The target set in \cite{Wang-Zhang} is an equilibrium solution of the system with the null control, while in our paper, the target set $S$ may contain non-equilibrium solutions of the system with the null control.
  \item[$(b)$] The conclusion $(ii)$  in Theorem~\ref{Intro:17-1} (or Theorem \ref{Yuanyuantheorem2.2})
  gives characteristics of the optimal time and the optimal control via the minimizer of
  a given functional, under the assumption $(H_1)$ (or $(H_2)$).
     \item[$(c)$]
   By (ii) in Theorem~\ref{Intro:17-1} (or Theorem \ref{Yuanyuantheorem2.2}), we can use the similar  way to that used in
   \cite{Wang-Xu} (see also  \cite{Lu}) to get an algorithm for the optimal time and the optimal control.
\end{itemize}
\end{Remark}

\section{Preliminaries}
\subsection{Exact synchronization}

The main result of this subsection is the next theorem.

\begin{Theorem}\label{Preli:2}
Let $T>0$. The following statements are true:

\noindent $(i)$ If the system (\ref{Intro:1}) is exactly synchronizable at $T$, then
either $(H_1)$ or $(H_2)$ holds.

\noindent $(ii)$ If either $(H_1)$ or (\ref{wyuanyuan1.8}) in $(H_2)$ holds, then
the system (\ref{Intro:1}) is exactly synchronizable at $T$.
\end{Theorem}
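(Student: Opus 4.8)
The plan is to convert exact synchronizability at $T$ into a pure controllability statement and then to split according to whether the row condition (\ref{wang1.5}) holds. First I would use note $(a_2)$ to rewrite $\bm y(t)\in S$ as $D\bm y(t)=\bm 0$. On $(T,+\infty)$ the control must vanish, so there $\bm y$ solves the homogeneous system; expanding in the Dirichlet eigenbasis $\{e_k\}$, the $k$-th Fourier mode $\bm c_k(t)\in\mathbb R^n$ satisfies $\dot{\bm c}_k=-(\lambda_k I+A)\bm c_k$, whence $\bm c_k(t)=e^{-\lambda_k(t-T)}e^{-(t-T)A}\bm c_k(T)$. Hence $D\bm y(t)=\bm 0$ for all $t\ge T$ iff $D\,e^{-sA}\bm c_k(T)=\bm 0$ for all $s\ge 0$ and all $k$, and by analyticity in $s$ this is equivalent to $DA^{j}\bm c_k(T)=\bm 0$ for all $j\ge 0$; that is, $\bm y(T)$ must take values (a.e.\ in $\Omega$) in $V\triangleq\bigcap_{j\ge 0}\ker(DA^{j})$, the largest $A$-invariant subspace of $\ker D=\mathrm{span}\{\bm 1\}$ with $\bm 1=(1,\dots,1)^\top$. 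Consequently the system is exactly synchronizable at $T$ if and only if, for every $\bm y_0$, some admissible control drives the state $\bm y(T)$ into $V$. Since $\ker D$ is one-dimensional, exactly two cases occur: $A\bm 1\in\mathrm{span}\{\bm 1\}$, which is precisely (\ref{wang1.5}) and gives $V=\mathrm{span}\{\bm 1\}$; or $A\bm 1\notin\mathrm{span}\{\bm 1\}$, which is (\ref{wyuanyuan1.7}) and forces $V=\{\bm 0\}$.

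In the first case I would set $\bm z\triangleq D\bm y$. Using $DA=\widetilde A D$ from note $(b_3)$, $\bm z$ solves the closed $(n-1)$-component parabolic system $\bm z_t-\Delta\bm z+\widetilde A\bm z=\chi_\omega DB\bm u$ with $\bm z(0)=D\bm y_0$. As $D$ is surjective, $D\bm y_0$ ranges over all of $L^2(\Omega)^{n-1}$, and exact synchronizability of (\ref{Intro:1}) at $T$ is then exactly null controllability of this reduced system at $T$ (once $\bm z(T)=\bm 0$ the zero control keeps $\bm z\equiv\bm 0$ afterwards, and conversely). By the Kalman rank characterization of null controllability for constant-coefficient coupled parabolic systems, this holds iff $\mathrm{rank}(DB,\widetilde A DB,\dots,\widetilde A^{n-2}DB)=n-1$; since $DA^{j}=\widetilde A^{j}D$, this rank equals $\mathrm{rank}(DB,DAB,\dots,DA^{n-2}B)$, i.e.\ (\ref{wang1.5-1}). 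Thus, under (\ref{wang1.5}), exact synchronizability at $T$ is equivalent to $(H_1)$.

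In the second case $V=\{\bm 0\}$: indeed $\bm y(T)\in\ker D$ means $\bm y(T)=f\bm 1$ for some $f\in L^2(\Omega)$, whence $DA\bm y(T)=f\,D(A\bm 1)=\bm 0$ together with $D(A\bm 1)\neq\bm 0$ forces $f=0$. So here exact synchronizability at $T$ coincides with null controllability of the full system (\ref{Intro:1}) at $T$, which by the same Kalman characterization is $\mathrm{rank}(B,AB,\dots,A^{n-1}B)=n$, i.e.\ (\ref{wyuanyuan1.8}); hence exact synchronizability at $T$ is equivalent to $(H_2)$. Combining the two cases yields $(i)$. For $(ii)$ it only remains to note that when (\ref{wyuanyuan1.8}) holds the full system is null controllable at $T$ (Kalman), so one can reach $\bm y(T)=\bm 0\in S$ and then keep the zero control, exactly synchronizing regardless of the row sums; together with the first case this proves $(ii)$.

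The spectral reduction on $(T,+\infty)$ and the elementary linear algebra identifying $V$ are routine; the substantive ingredient is the Kalman rank theorem for null controllability of parabolic systems with constant matrices, which I would invoke as a known result (its proof couples the algebraic Kalman argument with Lebeau--Robbiano or Carleman-type observability estimates). Were one to demand a fully self-contained treatment, reproving that theorem would be the main obstacle; otherwise the only genuine verifications needed are that the reduced system under $(H_1)$ is indeed closed with control matrix $DB$ and that $DA^{j}=\widetilde A^{j}D$, so that the two rank conditions really do match.
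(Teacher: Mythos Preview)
Your proposal is correct and follows the same overall architecture as the paper: split according to the row condition, reduce to a null-controllability statement for the appropriate system (the reduced $(n-1)$-component system under (\ref{wang1.5}), the full system under (\ref{wyuanyuan1.7})), and invoke the Kalman rank characterization (the paper's Lemma~\ref{Preli:1}). The one notable difference is in how you justify the reduction in the second case. You use a spectral expansion on $(T,+\infty)$ to identify the largest $A$-invariant subspace $V\subseteq\ker D$ and show it collapses to $\{\bm 0\}$ when $DA\bm 1\neq\bm 0$; the paper instead proves this directly (its Lemma~\ref{Preli:1-add}(ii)) by subtracting the $i_0$-th and $j_0$-th equations of (\ref{Intro:1}) on $(T,+\infty)$, where the control already vanishes, and using $\sum_\ell(a_{i_0\ell}-a_{j_0\ell})\neq 0$ together with $y_1=\cdots=y_n$ to force $y_{i_0}=0$ and hence $\bm y=\bm 0$. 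Your approach is more conceptual and yields a clean ``target-subspace'' picture that unifies the two cases; the paper's argument is a two-line computation requiring no spectral machinery or analyticity.
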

\begin{Remark}\label{yuanyuanremark2.2}
Theorem \ref{Preli:2} implies that the system (\ref{Intro:1}) is exactly synchronizable at each time $T$ if and only if either $(H_1)$ or $(H_2)$ is true.
\end{Remark}

The proof of  Theorem \ref{Preli:2} needs the next  null controllability result quoted from \cite{Ammar}.
 \begin{Lemma}\label{Preli:1}
Let $T>0$, $O\in \mathbb R^{k\times k}$ and $L\in\mathbb R^{k\times l}$, where $k$, $l\geq 1$.
Then the system
\begin{equation*}
\left\{
\begin{array}{lll}
\bm w_t-\Delta \bm w+O\bm w=\chi_\omega L\bm u&\mbox{in}&\Omega\times (0,+\infty),\\
\bm w=\bm 0&\mbox{on}&\partial \Omega\times(0,+\infty),\\
\bm w(0)\in L^2(\Omega)^k&&
\end{array}
\right.
\end{equation*}
is null controllable at $T$ if and only if
\begin{equation*}
\mbox{rank}(L, OL, \dots, O^{k-1}L)=k.
\end{equation*}
\end{Lemma}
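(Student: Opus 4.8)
The plan is to prove the equivalence through the classical duality between null controllability and observability, handling the two implications separately. First I would reduce the statement to an observability inequality for the adjoint system. Multiplying the state equation by a solution $\bm\phi$ of the backward adjoint system
\begin{equation*}
\bm\phi_t+\Delta\bm\phi-O^\top\bm\phi=\bm 0\;\;\mbox{in}\;\;\Omega\times(0,T),\quad \bm\phi=\bm 0\;\;\mbox{on}\;\;\partial\Omega\times(0,T),\quad \bm\phi(T)=\bm\phi_T,
\end{equation*}
and integrating by parts (the constant matrix $O$ passes to $O^\top$, and the control term produces $\chi_\omega L^\top\bm\phi$), one obtains the duality identity $\langle \bm w(T),\bm\phi_T\rangle-\langle \bm w(0),\bm\phi(0)\rangle=\int_0^T\langle\bm u,\chi_\omega L^\top\bm\phi\rangle\,\mathrm dt$. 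By the standard duality (Hilbert Uniqueness Method) argument, the system is null controllable at $T$ if and only if the observability inequality
\begin{equation*}
\|\bm\phi(0)\|_{L^2(\Omega)^k}^2\leq C\int_0^T\int_\omega |L^\top\bm\phi|^2\,\mathrm dx\,\mathrm dt
\end{equation*}
holds for every solution $\bm\phi$ of the adjoint system. Thus both directions reduce to understanding when this inequality can hold.

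For the necessity (null controllability $\Rightarrow$ rank condition) I would argue by contraposition and construct an unobservable adjoint solution. Suppose $\mbox{rank}(L,OL,\dots,O^{k-1}L)<k$. By the Cayley--Hamilton theorem the subspace $W\triangleq\{\bm q\in\mathbb R^k: L^\top(O^\top)^j\bm q=\bm 0\;\mbox{for all}\;j\geq 0\}$ coincides with the orthogonal complement of the column space of $(L,OL,\dots,O^{k-1}L)$, hence $W\neq\{\bm 0\}$; moreover $W$ is invariant under $O^\top$, since $\bm q\in W$ gives $L^\top(O^\top)^{j}(O^\top\bm q)=L^\top(O^\top)^{j+1}\bm q=\bm 0$. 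Let $(\mu,e)$ be the first Dirichlet eigenpair of $-\Delta$ and set $\bm\phi(x,t)\triangleq e(x)\bm q(t)$, where $\bm q$ solves the ordinary differential equation $\dot{\bm q}=(\mu I+O^\top)\bm q$ with $\bm q(T)=\bm q_T\in W\setminus\{\bm 0\}$. A direct substitution shows that $\bm\phi$ solves the adjoint system, while the invariance of $W$ forces $\bm q(t)\in W$ for all $t$, so that $L^\top\bm\phi\equiv\bm 0$ on $\omega\times(0,T)$ although $\bm\phi(0)=e\,\bm q(0)\neq\bm 0$. This violates observability, and therefore null controllability fails.

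For the sufficiency (rank condition $\Rightarrow$ null controllability), which I expect to be the main obstacle, I would establish the observability inequality itself. The algebraic heart is that the observed quantity determines the whole state: because the constant matrix $L^\top$ commutes with $\partial_t+\Delta$, applying the adjoint operator repeatedly to $L^\top\bm\phi$ and using $(\partial_t+\Delta)\bm\phi=O^\top\bm\phi$ yields, on $\omega$, the quantities $(\partial_t+\Delta)^j(L^\top\bm\phi)=L^\top(O^\top)^j\bm\phi$ for $j=0,1,\dots,k-1$; the rank condition says precisely that the stacked matrix $(L,OL,\dots,O^{k-1}L)^\top$ is left-invertible, so $\bm\phi$ itself is recovered on $\omega\times(0,T)$. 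The hard part will be that this differentiation generates spatial derivatives of $\bm\phi$, so the heuristic must be implemented inside a global Carleman estimate: one applies a scalar Carleman inequality with a common weight to each component of the heat operator, uses the coupling to pass from the local term in $L^\top\bm\phi$ to local terms in the full $\bm\phi$, and absorbs the resulting higher-order local integrals by means of the rank condition. This Carleman-based bookkeeping is the technical core, and is exactly the computation carried out in \cite{Ammar}; I would invoke that machinery rather than reproduce it, and then combine the resulting observability estimate with the duality of the first step to conclude null controllability.
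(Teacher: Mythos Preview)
The paper does not prove this lemma at all: it is introduced with the sentence ``The proof of Theorem~\ref{Preli:2} needs the next null controllability result quoted from \cite{Ammar}'' and is simply cited as a known result. Your proposal therefore goes well beyond what the paper does, and the outline you give is correct and follows exactly the route taken in \cite{Ammar}: duality reduces the problem to the observability inequality, the failure of the Kalman rank condition yields an $O^\top$-invariant kernel from which a separated-variables adjoint solution $e(x)\bm q(t)$ annihilated by $L^\top$ is built, and the sufficiency is obtained via Carleman estimates using the algebraic identity $(\partial_t+\Delta)^j(L^\top\bm\phi)=L^\top(O^\top)^j\bm\phi$ together with left-invertibility of the stacked matrix. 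Since you already attribute the Carleman machinery to \cite{Ammar}, your write-up is in effect a more informative version of the paper's bare citation.
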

The proof of  Theorem~\ref{Preli:2} also needs another lemma. To state it, we introduce the following controlled system:
\begin{equation}\label{wang1.6-1}
\left\{
\begin{array}{lll}
\bm z_t-\Delta \bm z+\widetilde{A}\bm z=\chi_\omega DB\bm u&\mbox{in}&\Omega\times (0,+\infty),\\
\bm z=\bm 0&\mbox{on}&\partial\Omega\times (0,+\infty),\\
\bm z(0)=\bm z_0&\mbox{in}&\Omega,
\end{array}\right.
\end{equation}
where $\bm u\in L^2(0,+\infty;L^2(\Omega)^m)$ and $\bm z_0\in L^2(\Omega)^{n-1}$.
We write $\bm z(\cdot;\bm z_0,\bm u)$ for the solution of (\ref{wang1.6-1}).

\begin{Lemma}\label{Preli:1-add}
Let $T\geq 0$ and let $\bm y_0\in L^2(\Omega)^n$. The following two conclusions are ture:
\begin{itemize}
\item[$(i)$] Assume that (\ref{wang1.5}) holds, i.e.,
$\sum_{\ell=1}^n a_{i\ell}=\sum_{\ell=1}^n a_{j\ell}$ for all $i, j\in \{1,2,\dots,n\}$.
Then $D\bm y(t;\bm y_0,\bm u)=\bm z(t;D\bm y_0,\bm u)$ for all $t\geq 0$ and $\bm u\in L^2(0,+\infty;L^2(\Omega)^m)$.
\item[$(ii)$] Assume that (\ref{wyuanyuan1.7}) holds, i.e.,
$\sum_{\ell=1}^n a_{i_0\ell}\not=\sum_{\ell=1}^n a_{j_0\ell}$
for some $i_0, j_0\in \{1,2,\dots,n\}$. If there exists a control $\bm u\in L^2(0,+\infty;L^2(\Omega)^m)$
with $\bm u(t)=\bm 0$ for a.e. $t>T$ so that $D\bm y(t;\bm y_0,\bm u)=\bm 0$ for all $t\geq T$,
then $\bm y(t;\bm y_0,\bm u)=\bm 0$ for all $t\geq T$.
\end{itemize}
\end{Lemma}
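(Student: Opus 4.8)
The plan is to treat the two conclusions separately, since they rely on different structural features of $(A,B)$. For conclusion $(i)$, the key observation is the algebraic identity $DA = \widetilde A D$ recorded in note $(b_3)$, which is equivalent to the row condition (\ref{wang1.5}). First I would apply the operator $D$ (acting componentwise in the $n$-dimensional state space, turning it into an $(n-1)$-dimensional one) to the equation (\ref{Intro:1}). Since $D$ commutes with $\partial_t$ and with $\Delta$ (these act only in the $t$ and $x$ variables, while $D$ acts on the component index), one gets
\begin{equation*}
(D\bm y)_t - \Delta(D\bm y) + DA\bm y = \chi_\omega DB\bm u
\quad\text{in}\quad \Omega\times(0,+\infty).
\end{equation*}
Using $DA = \widetilde A D$, the term $DA\bm y$ becomes $\widetilde A (D\bm y)$, so $D\bm y$ solves exactly the system (\ref{wang1.6-1}) with the same control $\bm u$ and with initial datum $D\bm y(0) = D\bm y_0$; it also inherits the homogeneous Dirichlet boundary condition since $D\bm 0 = \bm 0$. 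By uniqueness of solutions to the linear parabolic system (\ref{wang1.6-1}) in the relevant energy space, $D\bm y(t;\bm y_0,\bm u) = \bm z(t;D\bm y_0,\bm u)$ for all $t\ge 0$. This step is essentially bookkeeping; the only thing to be slightly careful about is justifying the commutation and the uniqueness at the level of weak solutions, but both are standard for systems of this type.

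For conclusion $(ii)$, the hypothesis (\ref{wyuanyuan1.7}) says the row sums of $A$ are not all equal, i.e., there exist $i_0,j_0$ with $\sum_\ell a_{i_0\ell}\neq\sum_\ell a_{j_0\ell}$. I would exploit this together with the assumption $D\bm y(t;\bm y_0,\bm u)=\bm 0$ for $t\geq T$, which means all components of $\bm y$ coincide on $[T,+\infty)$: write $\bm y(t) = \rho(t)\,\bm 1$ for $t\geq T$, where $\bm 1 = (1,\dots,1)^\top$ and $\rho\in C([T,+\infty);L^2(\Omega))$ is scalar-valued. Since $\bm u(t)=\bm 0$ for a.e. $t>T$, plugging $\bm y = \rho\bm 1$ into (\ref{Intro:1}) on $(T,+\infty)$ gives, for each component index $i$,
\begin{equation*}
\rho_t - \Delta\rho + \Big(\sum_{\ell=1}^n a_{i\ell}\Big)\rho = 0
\quad\text{in}\quad \Omega\times(T,+\infty).
\end{equation*}
Subtracting the $i_0$-th and $j_0$-th of these identities and setting $c \triangleq \sum_\ell a_{i_0\ell} - \sum_\ell a_{j_0\ell}\neq 0$ yields $c\,\rho = 0$ in $\Omega\times(T,+\infty)$, hence $\rho\equiv 0$, and therefore $\bm y(t;\bm y_0,\bm u) = \bm 0$ for all $t\geq T$. (Here one uses that $\bm y(\cdot)$ is continuous into $L^2(\Omega)^n$ to pass from "a.e. $t>T$" to "all $t\geq T$".) The main subtlety is the rigorous justification that a synchronized state with the null control must satisfy this scalar heat equation — one should phrase it as: the function $\rho\bm 1$ is a weak solution of (\ref{Intro:1}) on $(T,+\infty)$ with zero control, test against vectors of the form $(0,\dots,\phi,\dots,0)$ to isolate each row, and read off the per-component equation.

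I expect the second conclusion to be the more delicate of the two, not because the algebra is hard but because one must be careful that "$D\bm y = \bm 0$ on $[T,+\infty)$" together with "$\bm u = \bm 0$ on $(T,+\infty)$" genuinely forces $\rho\bm 1$ to solve the uncontrolled equation in the weak sense on that time interval, and then that the resulting overdetermined family of scalar equations (one per row of $A$, all with the same unknown $\rho$ but different zeroth-order coefficients) is incompatible unless $\rho\equiv 0$. Once the row condition (\ref{wyuanyuan1.7}) is invoked to produce two incompatible coefficients, the conclusion is immediate. Conclusion $(i)$, by contrast, is a clean consequence of the commutation identity $DA=\widetilde AD$ and uniqueness for (\ref{wang1.6-1}).
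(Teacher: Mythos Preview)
Your proposal is correct and follows essentially the same approach as the paper: for (i) you apply $D$ to (\ref{Intro:1}), use $DA=\widetilde A D$, and invoke uniqueness for (\ref{wang1.6-1}); for (ii) you use that all components coincide for $t\geq T$, subtract the $i_0$-th and $j_0$-th equations of the uncontrolled system, and deduce $c\rho=0$ with $c\neq 0$. The only difference is that you spell out weak-solution and continuity details that the paper simply omits.
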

\begin{proof} (i) The desired result follows from (\ref{wang1.6}), (\ref{Intro:1})
and (\ref{wang1.6-1}) directly.

(ii) Because $D\bm y(t;\bm y_0,\bm u)=\bm 0$ for all $t\geq T$, i.e.,
\begin{equation}\label{Preli:1-add(1)}
y_1(t;\bm y_0,\bm  u)=y_2(t;\bm y_0,\bm  u)
=\cdots=y_n(t;\bm y_0,\bm u)\;\;\mbox{for all}\;\;t\geq T,
\end{equation}
it follows by (\ref{Intro:1}) that
\begin{equation*}
\sum_{\ell=1}^n( a_{i_0 \ell}- a_{j_0 \ell}) y_{i_0}(t;\bm y_0,\bm u)=0\;\;\mbox{for all}\;\;t\geq T.
\end{equation*}
Since $\sum_{\ell=1}^n( a_{i_0 \ell}- a_{j_0 \ell})\neq 0$, the above, along with (\ref{Preli:1-add(1)}), yields the desired result.

\vskip 5pt
Hence, we finish the proof of Lemma \ref{Preli:1-add}.
\end{proof}

\noindent\textbf{\it{Proof of Theorem~\ref{Preli:2}.}}
Arbitrarily fix $T>0$.
 We first show $(i)$.  Assume that (1.1) is exactly synchronizable at $T$.
About $\{a_{ij}\}_{i,j=1}^n$, there are only two possibilities:
either $\sum_{\ell=1}^n a_{i\ell}=\sum_{\ell=1}^n a_{j\ell}$ for all $i, j\in \{1,2,\dots,n\}$ or $\sum_{\ell=1}^n a_{i_0 \ell}\neq \sum_{\ell=1}^n a_{j_0 \ell}$
for some $i_0, j_0\in \{1,2,\dots,n\}$.

In the case that the first possibility occurs,
 we arbitrarily fix $\bm z_0\in L^2(\Omega)^{n-1}$.
Denote $\bm z_0\triangleq (z_{0,1},z_{0,2},\dots,z_{0,n-1})^{\top}$.
We can easily check that $\bm z_0=D\bm {\widehat{y}}_0$
for some $\bm {\widehat{y}}_0\in L^2(\Omega)^n$. For example, we can write
\begin{equation*}
\bm {\widehat{y}}_0=\Big(\sum_{k=1}^{n-1} z_{0,k}, \sum_{k=2}^{n-1}z_{0,k},\dots,z_{0,n-1},0\Big)^{\top}.
\end{equation*}
Since (\ref{Intro:1}) is exactly synchronizable at $T$,
there exists a control $\bm {\widehat{u}}\in L^2(0,+\infty;L^2(\Omega)^m)$ with $\bm {\widehat{u}}(t)=\bm 0$ a.e. $t>T$
so that
\begin{equation}\label{Preli:2-4}
D\bm y(t;\bm {\widehat{y}}_0,\bm {\widehat{u}})=\bm 0\;\;\mbox{for all}\;\;t\geq T.
\end{equation}
From (i) of Lemma~\ref{Preli:1-add} and (\ref{Preli:2-4}) it follows that
(\ref{wang1.6-1}) is null controllable at $T$.
This, together with Lemma~\ref{Preli:1} (where $L=DB, O=\widetilde{A}, k=n-1$ and $l=m$), implies that
\begin{equation*}
\textrm{rank}(DB,\widetilde ADB,\dots, \widetilde A^{n-2}DB)=n-1,
\end{equation*}
which, combined with (\ref{wang1.6}), indicates that
\begin{equation*}
\textrm{rank}(DB,DAB,\dots,DA^{n-2}B)=n-1.
\end{equation*}
Hence, $(H_1)$ is true.

We now consider the  case that the second possibility occurs.
Since the system (\ref{Intro:1}) is exactly synchronizable at $T$,
we see that for each $\bm y_0\in L^2(\Omega)^n$, there exists $\widetilde {\bm u}\in L^2(0,+\infty;L^2(\Omega)^m)$ with
$\widetilde {\bm u}(t)=\bm 0$ for a.e. $t>T$ so that
$D\bm y(t;\bm y_0,\widetilde{\bm u})=\bm 0$ for all $t\geq T$.
Then we can apply  (ii) of Lemma~\ref{Preli:1-add}
to this case to obtain  that
$\bm y(t;\bm y_0,\widetilde{\bm u})=\bm 0$ for all $t\geq T$, i.e.,
the system (\ref{Intro:1}) is null controllable at $T$.
This, along with Lemma~\ref{Preli:1} (where $L=B, O=A, k=n$ and $l=m$), implies that
\begin{equation*}
  \mbox{rank}(B,AB,\dots,A^{n-1}B)= n.
\end{equation*}
Thus, $(H_2)$ is true. This ends the proof of the conclusion $(i)$. \\

We next show the conclusion $(ii)$ by the following two steps.

In Step 1, we consider the case  that  $(H_1)$ is true.
By the note $(b_3)$ in section 1, we can find a unique matrix
$\widetilde A\in \mathbb{R}^{n-1}\times\mathbb{R}^{n-1}$ holding  (\ref{wang1.6}).
This, along with (\ref{wang1.5-1}), indicates that
\begin{equation}\label{Preli:2-8}
\textrm{rank}(DB,\widetilde ADB,\dots, \widetilde A^{n-2}DB)=\textrm{rank}(DB,DAB,\dots,DA^{n-2}B)=n-1.
\end{equation}
By (\ref{Preli:2-8}), we can apply  Lemma~\ref{Preli:1} (where $L=DB$, $O=\widetilde A$,
 $k=n-1$ and $l=m$) to see that the system (\ref{wang1.6-1}) is null controllable at $T$. Thus,
for $\bm z_0=D \bm y_0$, with $\bm y_0\in L^2(\Omega)^n$ arbitrarily fixed,
there exists a control ${\bm u_0}\in L^2(0,+\infty;L^2(\Omega)^m)$, with
${\bm u_0}(t)=\bm 0$ for a.e. $t>T$, so that
\begin{equation}\label{yuanyuan2.4}
\bm z(t;D\bm y_0,\bm u_0)=\bm 0\;\;\mbox{for all}\;\;t\geq T,
\end{equation}
where $z(\cdot; D\bm y_0,\bm u_0)$ is the solution of  (\ref{wang1.6-1}) with $\bm z_0=D \bm y_0$ and $\bm u=\bm u_0$.
Because (\ref{wang1.5}) is a part of $(H_1)$, we can use (i) of Lemma~\ref{Preli:1-add},
as well as (\ref{yuanyuan2.4}), to obtain
 that
\begin{equation*}
D\bm y(t;\bm y_0,{\bm u_0})=\bm 0\;\;\mbox{for all}\;\;t\geq T,
\end{equation*}
from which, we see that the system (\ref{Intro:1}) is exactly synchronizable at $T$.

In Step 2, we consider the case that (\ref{wyuanyuan1.8}) in $(H_2)$ is true.
 In this case,  we can use  Lemma~\ref{Preli:1}
 (where $O=A$, $L=B$, $k=n$ and $l=m$) to find that  the system (\ref{Intro:1})
is null controllable at $T$. Consequently, it is exactly synchronizable at $T$.

\vskip 5pt

In summary, we finish the proof of Theorem~\ref{Preli:2}.
\hspace{60mm} $\Box$\\

\subsection{Transformations of problems}
The hypothesis $(H_1)$ contains two parts: the row condition (\ref{wang1.5})
and the rank condition (\ref{wang1.5-1}), while the hypothesis $(H_2)$ also contains
two parts: the row condition (\ref{wyuanyuan1.7}) and the rank condition (\ref{wyuanyuan1.8}).
Each row condition can help  us to transform the problem
$(TP)_M^{\bm y_0}$  into a minimal time control problem where the target is the origin of the state space.
The same can be said about $(NP)_T^{\bm y_0}$. More precisely, we have the next discussions.

Under the row condition (\ref{wang1.5}) in $(H_1)$, we consider the following two problems:
\begin{equation*}
\begin{array}{lll}
(\widetilde{TP})_M^{\bm y_0}\;\;\;\;\;\;\;\;\;{\widetilde T}(M,\bm y_0)
\triangleq\mbox{inf}_{\bm u\in \mathcal{U}_M}\{T\geq 0:&
\bm z(T;D\bm y_0,\bm u)=\bm 0\\
&\mbox{and}\;\;\bm u(\cdot)=\bm 0\;\;\mbox{over}\;\;[T,+\infty)\},
\end{array}
\end{equation*}
and
\begin{equation*}
\begin{array}{lll}
(\widetilde{NP})_T^{\bm y_0}\;\;\;\;\;\;\;\;\;{\widetilde N}(T,\bm y_0)
\triangleq\mbox{inf} \{\|\bm v\|_{L^2(0,+\infty;L^2(\Omega)^m)}:&\bm z(T;D\bm y_0,\bm v)=\bm 0\\
&\mbox{and}\;\bm v(\cdot)=\bm 0\;\,\mbox{over}\;\,[T,+\infty)\}.
\end{array}
\end{equation*}

About Problem $(\widetilde{TP})_M^{\bm y_0}$, we call ${\widetilde T}(M,\bm y_0)$ the optimal time; we call $\bm u\in \mathcal{U}_M$
    an admissible control if there is $T\geq 0$ so that $\bm z(T;D\bm y_0,\bm u)=\bm 0$ and $\bm u(\cdot)=\bm 0$
     over $[T,+\infty)$; we call $\bm u^*\in \mathcal{U}_M$
    an optimal control if ${\widetilde T}(M,\bm y_0)<+\infty, \bm z({\widetilde T}(M,\bm y_0);D\bm y_0,\bm u^*)=\bm 0$
     and $\bm u^*(\cdot)=\bm 0$
 over $[{\widetilde T}(M,\bm y_0),+\infty)$; we agree that ${\widetilde T}(M,\bm y_0)=+\infty$ if the problem
 $(\widetilde{TP})_M^{\bm y_0}$ has no admissible control.

About Problem $(\widetilde{NP})_T^{\bm y_0}$,
we call $\bm v\in L^2(0,+\infty;L^2(\Omega)^m)$ an admissible control if $\bm z(T;D\bm y_0,\bm v)=\bm 0$ and
$\bm v(\cdot)=\bm 0$
  over $[T, +\infty)$; we call $\widetilde{N}(T,\bm y_0)$  the minimal norm; we call a function $\bm v^*$
 an optimal control if it is admissible and satisfies that $\|\bm v^*\|_{L^2(0,+\infty;L^2(\Omega)^m)}=\widetilde{N}(T,\bm y_0)$.

Then we have the next theorem.
\begin{Theorem}\label{20180225-1}
Suppose that  the row condition (\ref{wang1.5}) in $(H_1)$ holds.
Let $M>0, T>0$ and $\bm y_0\in L^2(\Omega)^n$.
Then the following conclusions are true:
\begin{itemize}
\item[$(i)$] Problems $(TP)_M^{\bm y_0}$ and $(\widetilde{TP})_M^{\bm y_0}$ have the same optimal time,
the same admissible controls and the same optimal controls.
\item[$(ii)$] Problems $(NP)_T^{\bm y_0}$ and $(\widetilde{NP})_T^{\bm y_0}$ have the same minimal norm,
the same admissible controls and the same optimal controls.
\end{itemize}
\end{Theorem}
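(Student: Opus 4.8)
The plan is to reduce both assertions to a single equivalence of constraints, which follows by combining (i) of Lemma~\ref{Preli:1-add}, the characterization of $S$ recorded in note $(a_2)$, and the well-posedness (in particular uniqueness) of the Cauchy problem for the reduced system (\ref{wang1.6-1}). Precisely, I would first fix $T\geq 0$ and an arbitrary control $\bm u\in L^2(0,+\infty;L^2(\Omega)^m)$ with $\bm u(\cdot)=\bm 0$ over $[T,+\infty)$, and prove
\[
\bm y(\cdot;\bm y_0,\bm u)\in S\ \mbox{over}\ [T,+\infty)\qquad\Longleftrightarrow\qquad\bm z(T;D\bm y_0,\bm u)=\bm 0 .
\]
For ``$\Rightarrow$'': by note $(a_2)$, $\bm y(t;\bm y_0,\bm u)\in S$ is equivalent to $D\bm y(t;\bm y_0,\bm u)=\bm 0$, and by (i) of Lemma~\ref{Preli:1-add} --- which uses exactly the row condition (\ref{wang1.5}) --- one has $D\bm y(t;\bm y_0,\bm u)=\bm z(t;D\bm y_0,\bm u)$; hence $\bm z(t;D\bm y_0,\bm u)=\bm 0$ for every $t\geq T$, in particular at $t=T$. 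For ``$\Leftarrow$'': since $\bm u(\cdot)=\bm 0$ over $[T,+\infty)$, the restriction of $\bm z(\cdot;D\bm y_0,\bm u)$ to $[T,+\infty)$ solves the homogeneous version of (\ref{wang1.6-1}) (zero source, zero Dirichlet data) with initial datum $\bm z(T;D\bm y_0,\bm u)=\bm 0$, so by uniqueness $\bm z(\cdot;D\bm y_0,\bm u)\equiv\bm 0$ on $[T,+\infty)$; applying (i) of Lemma~\ref{Preli:1-add} and note $(a_2)$ once more yields $\bm y(\cdot;\bm y_0,\bm u)\in S$ over $[T,+\infty)$.

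Granting this equivalence, conclusion $(i)$ is a matter of unwinding definitions. For each $\bm u\in\mathcal U_M$ and each $T\geq 0$, the conditions ``$\bm u(\cdot)=\bm 0$ and $\bm y(\cdot;\bm y_0,\bm u)\in S$ over $[T,+\infty)$'' and ``$\bm z(T;D\bm y_0,\bm u)=\bm 0$ and $\bm u(\cdot)=\bm 0$ over $[T,+\infty)$'' are equivalent, so the two problems share exactly the same feasible pairs $(\bm u,T)$; taking infima over such $T$ for each $\bm u$ and then over $\bm u\in\mathcal U_M$ gives $T(M,\bm y_0)=\widetilde T(M,\bm y_0)$ (with the common value $+\infty$ precisely when no feasible pair exists), and having the same feasible pairs means having the same admissible controls. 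Writing $T^*$ for this common optimal time, $\bm u^*\in\mathcal U_M$ is optimal for $(TP)_M^{\bm y_0}$ iff $T^*<+\infty$, $\bm u^*(\cdot)=\bm 0$ over $[T^*,+\infty)$ and $\bm y(\cdot;\bm y_0,\bm u^*)\in S$ over $[T^*,+\infty)$, which, by the displayed equivalence with $T=T^*$, is precisely the condition for $\bm u^*$ to be optimal for $(\widetilde{TP})_M^{\bm y_0}$.

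Conclusion $(ii)$ follows along the same lines with $T>0$ fixed: the equivalence shows that $\bm v\in L^2(0,+\infty;L^2(\Omega)^m)$ is admissible for $(NP)_T^{\bm y_0}$ if and only if it is admissible for $(\widetilde{NP})_T^{\bm y_0}$, and since both problems minimize the same quantity $\|\bm v\|_{L^2(0,+\infty;L^2(\Omega)^m)}$ over this common admissible set, they have the same minimal norm and the same optimal controls.

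The argument is essentially bookkeeping once the constraint equivalence is in place, so the one point that needs care is the ``$\Leftarrow$'' direction of that equivalence: there one must use uniqueness for (\ref{wang1.6-1}) to promote the pointwise-in-time condition $\bm z(T;D\bm y_0,\bm u)=\bm 0$ to the interval condition $\bm z(\cdot;D\bm y_0,\bm u)\equiv\bm 0$ on $[T,+\infty)$, and it is exactly here that the hypothesis $\bm u(\cdot)=\bm 0$ over $[T,+\infty)$ --- built into both $(\widetilde{TP})_M^{\bm y_0}$ and $(\widetilde{NP})_T^{\bm y_0}$ --- gets used.
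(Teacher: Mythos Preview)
Your proposal is correct and follows essentially the same approach as the paper: both rely on the characterization $\bm y\in S\Leftrightarrow D\bm y=\bm 0$ from note~$(a_2)$ together with (i) of Lemma~\ref{Preli:1-add}. The paper's proof is a one-line sketch, whereas you spell out in full the passage from the pointwise condition $\bm z(T;D\bm y_0,\bm u)=\bm 0$ to the interval condition via uniqueness for (\ref{wang1.6-1})---a detail the paper leaves implicit.
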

\begin{proof}
By the definition of
$S$ and (\ref{wang1.3}), one can easily check that  $\bm y\in S$ is equivalent to $D\bm y=\bm 0$. Then by the row condition (\ref{wang1.5}) in $(H_1)$, we can apply
  (i) of Lemma~\ref{Preli:1-add} to obtain the desired conclusions. This completes the proof.
\end{proof}

Under the row condition (\ref{wyuanyuan1.7}) in $(H_2)$, we consider the following two problems:
\begin{equation*}
\begin{array}{lll}
(\widehat{TP})_M^{\bm y_0}\;\;\;\;\;\;\;\;\;{\widehat T}(M,\bm y_0)
\triangleq\mbox{inf}_{\bm u\in \mathcal{U}_M}\{T\geq 0:&
\bm y(T;\bm y_0,\bm u)=\bm 0\\
&\mbox{and}\;\;\bm u(\cdot)=\bm 0\;\;\mbox{over}\;\;[T,+\infty)\},
\end{array}
\end{equation*}
and
\begin{equation*}
\begin{array}{lll}
(\widehat{NP})_T^{\bm y_0}\;\;\;\;\;\;\;\;\;{\widehat N}(T,\bm y_0)
\triangleq\mbox{inf} \{\|\bm v\|_{L^2(0,+\infty;L^2(\Omega)^m)}:&\bm y(T;\bm y_0,\bm v)=\bm 0\\
&\mbox{and}\;\bm v(\cdot)=\bm 0\;\,\mbox{over}\;\,[T,+\infty)\}.
\end{array}
\end{equation*}

About Problem $(\widehat{TP})_M^{\bm y_0}$, we call ${\widehat T}(M,\bm y_0)$ the optimal time; we call $\bm u\in \mathcal{U}_M$
    an admissible control if there is $T\geq 0$ so that $\bm y(T;\bm y_0,\bm u)=\bm 0$ and $\bm u(\cdot)=\bm 0$
     over $[T,+\infty)$; we call $\bm u^*\in \mathcal{U}_M$
    an optimal control if ${\widehat T}(M,\bm y_0)<+\infty, \bm y({\widehat T}(M,\bm y_0);\bm y_0,\bm u^*)=\bm 0$
     and $\bm u^*(\cdot)=\bm 0$
 over $[{\widehat T}(M,\bm y_0),+\infty)$; we agree that ${\widehat T}(M,\bm y_0)=+\infty$ if the problem
 $(\widehat{TP})_M^{\bm y_0}$ has no admissible control.

About Problem $(\widehat{NP})_T^{\bm y_0}$, we call $\bm v\in L^2(0,+\infty;L^2(\Omega)^m)$ an admissible control
if $\bm y(T;\bm y_0,\bm v)=\bm 0$ and $\bm v(\cdot)=\bm 0$
  over $[T, +\infty)$; we call $\widehat{N}(T,\bm y_0)$  the minimal norm;
 we call a function $\bm v^*$
 an optimal control if it is admissible and satisfies that $\|\bm v^*\|_{L^2(0,+\infty;L^2(\Omega)^m)}=\widehat{N}(T,\bm y_0)$.

Then we have the next theorem.
\begin{Theorem}\label{20180225-2}
Suppose that the row condition (\ref{wyuanyuan1.7}) in $(H_2)$ holds.
Let $M>0, T>0$ and $\bm y_0\in L^2(\Omega)^n$. Then
the following conclusions are true:
\begin{itemize}
\item[$(i)$] Problems $(TP)_M^{\bm y_0}$ and $(\widehat{TP})_M^{\bm y_0}$ have the same optimal time,
the same admissible controls and the same optimal controls.
\item[$(ii)$] Problems $(NP)_T^{\bm y_0}$ and $(\widehat{NP})_T^{\bm y_0}$ have the same minimal norm,
the same admissible controls and the same optimal controls.
\end{itemize}
\end{Theorem}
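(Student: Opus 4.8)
The plan is to reduce both statements to a single equivalence at the level of an individual control, after which $(i)$ and $(ii)$ follow just by comparing infima over identical feasible sets. Concretely, I would first establish the claim: for every $\bm u\in L^2(0,+\infty;L^2(\Omega)^m)$ and every $T\geq 0$, condition (A), namely ``$\bm u(\cdot)=\bm 0$ over $[T,+\infty)$ and $\bm y(\cdot;\bm y_0,\bm u)\in S$ over $[T,+\infty)$'', is equivalent to condition (B), namely ``$\bm u(\cdot)=\bm 0$ over $[T,+\infty)$ and $\bm y(T;\bm y_0,\bm u)=\bm 0$''. Since $\bm y\in S$ is the same as $D\bm y=\bm 0$ (by the definition of $S$ and (\ref{wang1.3})), condition (A) says exactly that $\bm u=\bm 0$ on $[T,+\infty)$ and $D\bm y(t;\bm y_0,\bm u)=\bm 0$ for all $t\geq T$. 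This is precisely the hypothesis of part $(ii)$ of Lemma~\ref{Preli:1-add}, which applies because the row condition (\ref{wyuanyuan1.7}) in $(H_2)$ is assumed; hence $\bm y(t;\bm y_0,\bm u)=\bm 0$ for all $t\geq T$, in particular $\bm y(T;\bm y_0,\bm u)=\bm 0$, so (A) implies (B). For the converse, if (B) holds then on $[T,+\infty)$ the function $\bm y(\cdot;\bm y_0,\bm u)$ solves the homogeneous system $\bm y_t-\Delta\bm y+A\bm y=\bm 0$ with vanishing data at the initial time $T$; by uniqueness for this parabolic Cauchy problem $\bm y(t;\bm y_0,\bm u)=\bm 0$ for all $t\geq T$, and since $\bm 0\in S$ this gives (A).

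With the claim in hand, part $(i)$ is immediate: a pair $(\bm u,T)$ with $\bm u\in\mathcal U_M$ is feasible for $(TP)_M^{\bm y_0}$ exactly when (A) holds, and feasible for $(\widehat{TP})_M^{\bm y_0}$ exactly when (B) holds, so the two feasible sets coincide. Taking the infimum of $T$ over these identical sets gives $T(M,\bm y_0)=\widehat T(M,\bm y_0)$ (and one is $+\infty$ iff the other is); a control is admissible for one problem iff it is admissible for the other, by applying the claim with a witnessing $T$; and, using the equality of optimal times, $\bm u^*$ is optimal for $(TP)_M^{\bm y_0}$ iff (A) holds with $T=T(M,\bm y_0)$ iff (B) holds with $T=\widehat T(M,\bm y_0)$ iff $\bm u^*$ is optimal for $(\widehat{TP})_M^{\bm y_0}$. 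For part $(ii)$, fix $T>0$ and apply the claim to an arbitrary $\bm v\in L^2(0,+\infty;L^2(\Omega)^m)$: the admissible controls for $(NP)_T^{\bm y_0}$ are exactly those for $(\widehat{NP})_T^{\bm y_0}$, so the two problems minimize the same norm over the same set, whence $N(T,\bm y_0)=\widehat N(T,\bm y_0)$ and the optimal controls coincide.

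I expect no serious obstacle; the only non-bookkeeping point is the equivalence of (A) and (B). Its forward direction is exactly where the row condition (\ref{wyuanyuan1.7}) enters, through Lemma~\ref{Preli:1-add}$(ii)$: without that condition, $D\bm y\equiv\bm 0$ on $[T,+\infty)$ would not force $\bm y\equiv\bm 0$ there, which is precisely why the $(H_1)$ case (Theorem~\ref{20180225-1}) must instead pass to the reduced state $\bm z$. The backward direction is the standard uniqueness fact for heat-type systems, so the whole argument is short once the claim is isolated.
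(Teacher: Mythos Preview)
Your proposal is correct and follows essentially the same approach as the paper: the paper's proof is a one-line invocation of Lemma~\ref{Preli:1-add}$(ii)$, and you have simply spelled out the underlying equivalence (A)$\Leftrightarrow$(B) and the routine bookkeeping that this entails. The only addition you make beyond the paper is the explicit verification of the converse (B)$\Rightarrow$(A) via uniqueness for the homogeneous parabolic problem, which the paper leaves implicit.
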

\begin{proof}
By the row condition (\ref{wyuanyuan1.7}) in $(H_2)$, we can apply
  (ii) of Lemma~\ref{Preli:1-add} to obtain the desired conclusions.
This completes the proof.
\end{proof}

From now on, we will transform studies on $(TP)_M^{\bm y_0}$ (or $(NP)_T^{\bm y_0}$)
into studies on the problem $(\widetilde{TP})_M^{\bm y_0}$ (or
 the problem $(\widetilde{NP})_T^{\bm y_0}$), when $(H_1)$ is assumed;
 we will transform studies on $(TP)_M^{\bm y_0}$ (or $(NP)_T^{\bm y_0}$) into studies on the problem $(\widehat{TP})_M^{\bm y_0}$
 (or the problem $(\widehat{NP})_T^{\bm y_0}$), when $(H_2)$ is assumed.

{\it Throughout the rest of the paper, in the proofs of all results related to $(TP)_M^{\bm y_0}$ and $(NP)_T^{\bm y_0}$, we only deal with the case where $(H_1)$ is assumed.} The case where
$(H_2)$ is assumed can be studied very similarly.  The reasons are as follows:

\begin{description}
 \item[$(g_1)$] Both $(H_1)$ and $(H_2)$ are constituted by two parts: a row condition and
   a rank condition.
\item[$(g_2)$] The problems $(TP)_M^{\bm y_0}$ and $(NP)_T^{\bm y_0}$ have been transformed into
the problems $(\widetilde{TP})_M^{\bm y_0}$ and $(\widetilde{NP})_T^{\bm y_0}$,
when the row condition (\ref{wang1.5}) in $(H_1)$ holds;
    the problems  $(\widehat{TP})_M^{\bm y_0}$ and $(\widehat{NP})_T^{\bm y_0}$,
    when the row condition (\ref{wyuanyuan1.7}) in $(H_2)$ holds.
\item [$(g_3)$] Throughout the rest of the paper,
in the studies on problems $(\widetilde{TP})_M^{\bm y_0}$ and $(\widetilde{NP})_T^{\bm y_0}$
(or problems $(\widehat{TP})_M^{\bm y_0}$ and $(\widehat{NP})_T^{\bm y_0}$), the row condition (\ref{wang1.5})
(or the row condition (\ref{wyuanyuan1.7})) will not be used again.
\item[$(g_4)$] The rank condition  (\ref{wang1.5-1}) in $(H_1)$ and
the rank condition (\ref{wyuanyuan1.8}) in $(H_2)$ are the same essentially.
This can be explained in the following manner:
    First, the  controlled system
 (\ref{wang1.6-1}) (to the  problems  $(\widetilde{TP})_M^{\bm y_0}$ and $(\widetilde{NP})_T^{\bm y_0}$)
  is governed by the pair of matrices $(\widetilde{A}, DB)$ and is in the state space
  $L^2(\Omega)^{n-1}$, while the controlled system (\ref{Intro:1})
  (to the problems $(\widehat{TP})_M^{\bm y_0}$ and $(\widehat{NP})_T^{\bm y_0}$) is governed by the pair of matrices $(A,B)$
  and is in the state space $L^2(\Omega)^{n}$. Second,
      since $\widetilde{A}$ satisfies (\ref{wang1.6}) (see the note $(b_3)$ in section 1), we see that (\ref{wang1.5-1}) is equivalent to the rank condition:
\begin{equation*}
\mbox{rank}(DB,\widetilde{A}DB,\dots,\widetilde{A}^{n-2}DB)=n-1.
\end{equation*}
Hence, the rank condition  (\ref{wang1.5-1}) (in $(H_1)$) for the system  (\ref{wang1.6-1}) is the same as the rank condition (\ref{wyuanyuan1.8}) (in $(H_2)$) for the system (\ref{Intro:1}) essentially.
\end{description}

\section{Minimal norm control problem}
This section studies properties on the minimal norm control problem $(NP)_T^{\bm y_0}$.
\subsection{Characteristic for minimal norm controls}

The main purpose of this subsection is to prove the next theorem which gives the characteristic of minimal norm controls.

\begin{Theorem}\label{Norm:6} Let $T>0$ and let $\bm y_0\in L^2(\Omega)^n$.
 The following conclusions are true:
\begin{itemize}
  \item[(i)] Suppose that $(H_1)$ holds. If $\bm y_0\in S$, then $\bm 0$ is the unique optimal control to
  the problem $(NP)_T^{\bm y_0}$; If $\bm y_0\not\in S$, then the control, defined by
\begin{equation}\label{Norm:6-1}
\bm v^*(t)\triangleq \left\{
\begin{array}{ll}
\chi_\omega B^\top D^\top \bm\psi^*(t),&t\in (0,T),\\
\bm 0,&t\geq T,
\end{array}\right.
\end{equation}
is the unique optimal control to $(NP)_{T}^{\bm y_0}$, where
$\chi_\omega B^\top D^\top \bm\psi^*$, with $\bm \psi^*\in C([0,T);L^2(\Omega)^{n-1})$ solving
(\ref{Intro:11}), is the unique minimizer of
$J_{T,1}^{\bm y_0}$ over $Y_{T,1}$, moreover,  $\bm v^*(t)\not=\bm 0$ for a.e. $t\in (0,T)$.

  \item[(ii)] Suppose that $(H_2)$ holds. If $\bm y_0=\bm 0$,
  then $\bm 0$ is the unique optimal control to
  the problem $(NP)_T^{\bm y_0}$; If $\bm y_0\not=\bm 0$, then the control, defined by
\begin{equation*}
\bm v^*(t)\triangleq \left\{
\begin{array}{ll}
\chi_\omega B^\top \bm\varphi^*(t),&t\in (0,T),\\
\bm 0,&t\geq T,
\end{array}\right.
\end{equation*}
is the unique optimal control to $(NP)_{T}^{\bm y_0}$, where
$\chi_\omega B^\top \bm\varphi^*$, with $\bm \varphi^*\in C([0,T);L^2(\Omega)^n)$ solving (\ref{Intro:14}),
is the unique minimizer of
$J_{T,2}^{\bm y_0}$ over $Y_{T,2}$, moreover,  and $\bm v^*(t)\not=\bm 0$ for a.e. $t\in (0,T)$.
\end{itemize}
\end{Theorem}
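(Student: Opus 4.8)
Following $(g_1)$--$(g_4)$ I would only treat conclusion (i) under $(H_1)$; conclusion (ii) is obtained by the same argument with the pair $(\widetilde A,DB)$ and the state $D\bm y_0$ replaced by $(A,B)$ and $\bm y_0$, the system (\ref{wang1.6-1}) by (\ref{Intro:1}), the adjoint (\ref{Intro:11}) by (\ref{Intro:14}), and the condition ``$\bm y_0\in S$'' by ``$\bm y_0=\bm 0$''. By (ii) of Theorem~\ref{20180225-1}, $(NP)_T^{\bm y_0}$ has the same minimal norm, admissible controls and optimal controls as $(\widetilde{NP})_T^{\bm y_0}$, so it suffices to argue for the latter. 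If $\bm y_0\in S$ then $D\bm y_0=\bm 0$, so $\bm z(T;D\bm y_0,\bm 0)=\bm 0$, the zero control is admissible, and, its norm being $0$, it is the unique optimal control. From now on assume $\bm y_0\notin S$, i.e.\ $D\bm y_0\neq\bm 0$.

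The engine of the proof is the duality between (\ref{wang1.6-1}) and its adjoint (\ref{Intro:11}). Multiplying (\ref{wang1.6-1}) by $\bm\psi(\cdot;T,\bm\psi_T)$ and integrating by parts over $\Omega\times(0,T)$ yields, for every $\bm\psi_T\in L^2(\Omega)^{n-1}$ and every control $\bm v$ with $\bm v(\cdot)=\bm 0$ over $[T,+\infty)$,
\[
\langle\bm z(T;D\bm y_0,\bm v),\bm\psi_T\rangle_{L^2(\Omega)^{n-1}}=\langle D\bm y_0,\bm\psi(0;T,\bm\psi_T)\rangle_{L^2(\Omega)^{n-1}}+\langle\bm v,\chi_\omega B^\top D^\top\bm\psi(\cdot;T,\bm\psi_T)\rangle_{L^2(0,T;L^2(\omega)^m)}.
\]
Hence such a $\bm v$ is admissible for $(\widetilde{NP})_T^{\bm y_0}$ if and only if $\langle\bm v,\eta\rangle_{L^2(0,T;L^2(\omega)^m)}+\langle D\bm y_0,\bm\psi(0)\rangle_{L^2(\Omega)^{n-1}}=0$ for every $\eta=\chi_\omega B^\top D^\top\bm\psi\in X_{T,1}$; and since, by (i) of Corollary~\ref{20180205-1} together with the representation of $Y_{T,1}$ in (i) of Lemma~\ref{Norm:2}, both $J_{T,1}^{\bm y_0}$ and the affine term $\eta\mapsto\langle D\bm y_0,\bm\psi(0)\rangle$ extend continuously from $X_{T,1}$ to $Y_{T,1}$, this admissibility criterion persists for all $\eta\in Y_{T,1}$.

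Now I would bring in the minimizer. By (i) of Lemma~\ref{Norm:5}, $J_{T,1}^{\bm y_0}$ has a unique, nontrivial minimizer $\eta^*=\chi_\omega B^\top D^\top\bm\psi^*$ in $Y_{T,1}$, whose Euler--Lagrange equation reads $\langle\eta^*,\eta\rangle_{L^2(0,T;L^2(\omega)^m)}+\langle D\bm y_0,\bm\psi(0)\rangle_{L^2(\Omega)^{n-1}}=0$ for all $\eta\in Y_{T,1}$. Comparing this with the admissibility criterion shows at once that $\bm v^*$ given by (\ref{Norm:6-1}) is admissible, and, taking $\eta=\eta^*$, that $\|\bm v^*\|_{L^2(0,T;L^2(\omega)^m)}^2=-\langle D\bm y_0,\bm\psi^*(0)\rangle$. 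For any admissible $\bm v$ (automatically $\bm v=\bm 0$ over $[T,+\infty)$), the criterion with $\eta=\eta^*$ gives $\langle\bm v,\bm v^*\rangle_{L^2(0,T;L^2(\omega)^m)}=-\langle D\bm y_0,\bm\psi^*(0)\rangle=\|\bm v^*\|^2$, whence by Cauchy--Schwarz $\|\bm v^*\|^2\le\|\bm v\|_{L^2(0,T;L^2(\omega)^m)}\,\|\bm v^*\|\le\|\bm v\|_{L^2(0,+\infty;L^2(\Omega)^m)}\,\|\bm v^*\|$, so $\bm v^*$ is optimal, and it is nontrivial because $\eta^*$ is. If $\bm v$ is any optimal control, equality must hold throughout, which, since $\bm v^*\neq\bm 0$, forces $\bm v=\bm v^*$; this gives uniqueness. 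Finally, $\bm v^*(t)\ne\bm 0$ for a.e.\ $t\in(0,T)$: by (i) of Lemma~\ref{Norm:2}, $\bm\psi^*$ solves (\ref{Intro:11}) on $(0,T)$, so by interior parabolic regularity $t\mapsto\chi_\omega B^\top D^\top\bm\psi^*(t)$ is real-analytic on $(0,T)$ with values in $L^2(\omega)^m$; were it to vanish on a set of positive measure it would vanish identically, contradicting $\bm v^*\ne\bm 0$.

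The step I expect to be most delicate is the passage from $X_{T,1}$ to its closure $Y_{T,1}$: the minimizer $\eta^*$ is in general only an $L^2(0,T;L^2(\omega)^m)$-limit of elements $\chi_\omega B^\top D^\top\bm\psi(\cdot;T,\bm z_i)$, and one must know that the affine term $\eta\mapsto\langle D\bm y_0,\bm\psi(0)\rangle$---not merely the quadratic part---is continuous for that topology, so that both the Euler--Lagrange identity and the admissibility criterion may legitimately be tested against $\eta^*$. This is exactly what the well-posedness of $J_{T,1}^{\bm y_0}$ on $Y_{T,1}$ and the characterization of $Y_{T,1}$ are for, and I would invoke them from (i) of Corollary~\ref{20180205-1} and (i) of Lemma~\ref{Norm:2} rather than reprove them here.
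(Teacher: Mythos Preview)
Your proposal is correct and follows essentially the same route as the paper: reduce to $(\widetilde{NP})_T^{\bm y_0}$ via Theorem~\ref{20180225-1}, derive the duality identity between (\ref{wang1.6-1}) and (\ref{Intro:11}), read off admissibility of $\bm v^*$ from the Euler--Lagrange equation of the minimizer (Lemma~\ref{Norm:5}), obtain optimality by testing any admissible $\bm v$ against $\eta^*$ and applying Cauchy--Schwarz, and conclude the a.e.\ nonvanishing from analyticity of the adjoint semigroup. The only cosmetic deviation is that you get uniqueness from the equality case in Cauchy--Schwarz, whereas the paper uses the parallelogram law; both are valid and your identification of the $X_{T,1}\to Y_{T,1}$ closure step as the delicate point (handled via Lemma~\ref{Norm:2} and Corollary~\ref{20180205-1}) matches exactly what the paper does explicitly in its Step~2.
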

To show Theorem \ref{Norm:6}, we need some preparations.

\vskip 5pt

\subsubsection{Observability estimates}

\begin{Proposition}\label{Preli:5} Let $T>0$. The following conclusions are true:
\begin{itemize}
\item[(i)] Suppose that $(H_1)$ holds.
Then there exists a positive constant $C(T)$ so that
\begin{equation}\label{Preli:5-1}
\|\bm \psi(0;T,\bm \psi_T)\|_{L^2(\Omega)^{n-1}}^2\leq
C(T)\displaystyle{\int_0^T}\|\chi_\omega B^\top D^\top\bm \psi(t;T,\bm\psi_T)\|_{L^2(\Omega)^m}^2\mathrm dt
\end{equation}
for all $\bm\psi_T\in L^2(\Omega)^{n-1}$.
Here, $\bm\psi(\cdot;T,\bm\psi_T)$ denotes the unique solution to
(\ref{Intro:11}) with the initial condition $\bm \psi(T)=\bm \psi_T$.
\item[(ii)] Suppose that $(H_2)$ holds.
Then there exists a positive constant $C(T)$ so that
\begin{equation*}
\|\bm \varphi(0;T,\bm \varphi_T)\|_{L^2(\Omega)^n}^2\leq
 C(T)\displaystyle{\int_0^T} \|\chi_\omega B^\top \bm \varphi(t;T,\bm\varphi_T)\|_{L^2(\Omega)^m}^2\mathrm dt
\end{equation*}
for all $\bm\varphi_T\in L^2(\Omega)^n$. Here, $\bm\varphi(\cdot;T,\bm\varphi_T)$ denotes the unique solution to
(\ref{Intro:14}) with the initial condition $\bm \varphi(T)=\bm \varphi_T$.
\end{itemize}
\end{Proposition}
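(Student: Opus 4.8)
The plan is to obtain both estimates as instances of the classical duality between null controllability and observability for linear parabolic systems, using the null controllability results already established in the excerpt.

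I would first treat part $(i)$. Recall that in Step~1 of the proof of conclusion $(ii)$ of Theorem~\ref{Preli:2} it was shown that, under $(H_1)$, the rank condition (\ref{wang1.5-1}) together with (\ref{wang1.6}) yields (\ref{Preli:2-8}), i.e. $\mathrm{rank}(DB,\widetilde A DB,\dots,\widetilde A^{n-2}DB)=n-1$; hence, by Lemma~\ref{Preli:1} applied with $O=\widetilde A$, $L=DB$, $k=n-1$, $l=m$, the controlled system (\ref{wang1.6-1}) is null controllable at time $T$. The control operator of (\ref{wang1.6-1}) is $\bm u\mapsto \chi_\omega DB\bm u$, whose Hilbert-space adjoint is $\bm\psi\mapsto \chi_\omega(DB)^\top\bm\psi=\chi_\omega B^\top D^\top\bm\psi$ (since $\chi_\omega$ acts as a scalar multiplication operator, hence self-adjoint and commuting with matrices), and the adjoint equation of $\partial_t-\Delta+\widetilde A$ with data prescribed at $t=T$ is exactly (\ref{Intro:11}), because $-\Delta$ is self-adjoint and the matrix $\widetilde A$ contributes $\widetilde A^\top$. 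Invoking the standard equivalence (the HUM/duality argument for parabolic null controllability), null controllability of (\ref{wang1.6-1}) at $T$ is equivalent to the existence of $C(T)>0$ such that
\[
\|\bm\psi(0;T,\bm\psi_T)\|_{L^2(\Omega)^{n-1}}^2\leq C(T)\int_0^T\|\chi_\omega B^\top D^\top\bm\psi(t;T,\bm\psi_T)\|_{L^2(\Omega)^m}^2\,\mathrm dt
\]
for all $\bm\psi_T\in L^2(\Omega)^{n-1}$, which is precisely (\ref{Preli:5-1}). Along the way one records the well-posedness of (\ref{Intro:11}) in $C([0,T];L^2(\Omega)^{n-1})$, obtained by reversing time and applying standard parabolic theory.

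For part $(ii)$ I would argue in the same way: under $(H_2)$, the rank condition (\ref{wyuanyuan1.8}) together with Lemma~\ref{Preli:1} (with $O=A$, $L=B$, $k=n$, $l=m$) gives that (\ref{Intro:1}) is null controllable at $T$; the adjoint system is (\ref{Intro:14}), the adjoint of the control operator $\bm u\mapsto\chi_\omega B\bm u$ is $\bm\varphi\mapsto\chi_\omega B^\top\bm\varphi$, and the same duality then yields the stated inequality.

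The only point requiring genuine care — hence the main ``obstacle'' — is the precise formulation and justification of the duality statement: one must make sure the null controllability furnished by Lemma~\ref{Preli:1} is read in the $L^2$-state / $L^2(0,T;L^2)$-control setting in which the equivalence theorem applies, identify the correct adjoint semigroup and observation operator, and verify that the resulting observability constant may indeed be taken to depend on $T$ alone. Since this equivalence is entirely classical for parabolic systems, no essentially new difficulty arises; the substance of the proof is the bookkeeping needed to line up (\ref{Intro:11}) and (\ref{Intro:14}) with the adjoints of (\ref{wang1.6-1}) and (\ref{Intro:1}), respectively.
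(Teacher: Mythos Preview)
Your argument is correct, but it takes a slightly more roundabout route than the paper. The paper does not pass through Lemma~\ref{Preli:1} (null controllability) followed by the controllability/observability duality; instead it quotes a second result from the same reference \cite{Ammar}, recorded as Lemma~\ref{Preli:4}, which is directly the observability inequality for $\bm\varphi_t+\Delta\bm\varphi-O^\top\bm\varphi=\bm 0$ under the Kalman rank condition on $(O,L)$. Once one has (\ref{Preli:2-8}), i.e. $\mathrm{rank}(DB,\widetilde A DB,\dots,\widetilde A^{n-2}DB)=n-1$, Lemma~\ref{Preli:4} with $O=\widetilde A$, $L=DB$, $k=n-1$, $l=m$ gives (\ref{Preli:5-1}) in one line; part (ii) is handled analogously and in fact omitted in the paper by the symmetry discussed at the end of subsection~3.2. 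Your detour through Lemma~\ref{Preli:1} plus HUM duality reaches the same conclusion and is perfectly valid, but it requires you to spell out the equivalence with the correct constants and adjoints, whereas the paper simply imports the observability estimate as a black box.
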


To prove it, we need the next observability estimate quoted from \cite{Ammar}.

\begin{Lemma}\label{Preli:4}
Let $O\in \mathbb R^{k\times k}$, $L\in\mathbb R^{k\times l}$ and $T>0$, where $k$, $l\geq 1$. Assume that
\begin{equation*}
\mbox{rank}(L, OL,\dots, O^{k-1}L)=k.
\end{equation*}
Then any solution $\bm \varphi(\cdot)$ to the following system:
\begin{equation*}
\left\{
\begin{array}{lll}
\bm \varphi_t+\Delta \bm \varphi-O^{\top}\bm \varphi=\bm 0&\mbox{in}&\Omega\times (0,T),\\
\bm \varphi=\bm 0&\mbox{on}&\partial \Omega\times(0,T),\\
\bm \varphi(T)\in L^2(\Omega)^k&&
\end{array}
\right.
\end{equation*}
satisfies the estimate
\begin{equation*}
\|\bm \varphi(0)\|_{L^2(\Omega)^k}^2\leq
C(T)\int_0^{T}\|\chi_\omega L^{\top}\bm\varphi(t)\|^2_{L^2(\Omega)^l}\mathrm dt,
\end{equation*}
where  $C(T)$ is a positive constant depending on $T$.
\end{Lemma}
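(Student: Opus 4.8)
The plan is to deduce this observability estimate from the null controllability statement already available, namely Lemma~\ref{Preli:1}, by the classical duality between controllability and observability. The hypothesis $\mathrm{rank}(L,OL,\dots,O^{k-1}L)=k$ is exactly the condition under which Lemma~\ref{Preli:1} guarantees that the control system governed by $(O,L)$ is null controllable at $T$. Thus the whole task reduces to showing that null controllability at $T$ of that system is equivalent to the observability estimate for its adjoint $\bm\varphi_t+\Delta\bm\varphi-O^\top\bm\varphi=\bm 0$ with observation operator $\chi_\omega L^\top$.

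First I would record the transposition identity. Let $\bm w$ solve the control system of Lemma~\ref{Preli:1} with datum $\bm w_0$ and control $\bm u$, and let $\bm\varphi$ solve the adjoint system with terminal datum $\bm\varphi(T)=\bm\varphi_T$. Multiplying the $\bm w$-equation by $\bm\varphi$, integrating over $\Omega\times(0,T)$, and integrating by parts (using the homogeneous Dirichlet conditions and $\langle O\bm w,\bm\varphi\rangle=\langle\bm w,O^\top\bm\varphi\rangle$), all the interior Laplacian and coupling terms cancel, leaving
\[
\langle \bm w(T),\bm\varphi_T\rangle_{L^2(\Omega)^k}-\langle \bm w_0,\bm\varphi(0)\rangle_{L^2(\Omega)^k}=\int_0^T\langle \bm u(t),\chi_\omega L^\top\bm\varphi(t)\rangle_{L^2(\Omega)^l}\,\mathrm dt.
\]

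Next I would cast this in operator form. Writing $S(\cdot)$ for the semigroup of the system and $\mathcal L_T\bm u\triangleq\int_0^T S(T-s)\chi_\omega L\bm u(s)\,\mathrm ds$ for the control-to-state-at-$T$ map, the identity above shows that the Hilbert-space adjoint of $\mathcal L_T$ sends $\bm\varphi_T$ to the observation $\chi_\omega L^\top\bm\varphi$, while $S(T)^*$ sends $\bm\varphi_T$ to $\bm\varphi(0)$. Null controllability at $T$ is precisely the range inclusion $\mathrm{Range}(S(T))\subseteq\mathrm{Range}(\mathcal L_T)$, and the functional-analytic range-comparison lemma (Douglas' lemma) converts this inclusion into the dual estimate $\|S(T)^*\bm\varphi_T\|\le C(T)\|\mathcal L_T^*\bm\varphi_T\|$ for all $\bm\varphi_T$. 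Squaring and rewriting both sides through the identifications above yields exactly
\[
\|\bm\varphi(0)\|_{L^2(\Omega)^k}^2\le C(T)\int_0^T\|\chi_\omega L^\top\bm\varphi(t)\|_{L^2(\Omega)^l}^2\,\mathrm dt.
\]

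The main obstacle is the passage from the mere existence of a null control for each datum to the uniform observability constant $C(T)$: this is not automatic from the transposition identity alone, and it is where Douglas' lemma (equivalently, a closed-graph/uniform-boundedness argument on the minimal-norm control cost) does the essential work. Verifying its hypotheses requires the boundedness of $\mathcal L_T$ and the correctness of the adjoint identifications, which in turn rest on the well-posedness and regularity of both the forward and the adjoint systems. I would also note that this route is efficient precisely because the hard analytic core, namely the global Carleman estimate for the scalar heat equation together with the Kalman rank condition that lets one recover all components from the single observation $\chi_\omega L^\top\bm\varphi$, is already packaged inside Lemma~\ref{Preli:1}; a fully self-contained proof would instead establish that Carleman estimate directly and iterate it across the coupled components, which is considerably more laborious.
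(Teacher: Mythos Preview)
Your duality argument is correct and standard: from Lemma~\ref{Preli:1} one has null controllability at $T$, which by the range-inclusion characterization and Douglas' lemma is equivalent to the adjoint observability estimate you state. The transposition identity and the identification of $\mathcal L_T^*$ with the observation $\chi_\omega L^\top\bm\varphi$ are correct, and you rightly flag that the passage from existence of a control to a uniform constant is the nontrivial step handled by the closed-graph/Douglas argument.

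However, you should know that the paper does not prove this lemma at all: it simply quotes it from \cite{Ammar}, the same source from which Lemma~\ref{Preli:1} is taken. So there is no ``paper's own proof'' to compare against. Your route---deducing the observability estimate from the null controllability statement via abstract duality---is a legitimate and efficient way to make the paper more self-contained, since both lemmas come from the same external reference and are in fact equivalent formulations of the same analytic fact. The only caveat is that this is somewhat circular as an independent contribution: in \cite{Ammar} the observability estimate is proved directly via Carleman inequalities, and the controllability result of Lemma~\ref{Preli:1} is itself derived from it, so strictly speaking you are reversing the original logical order rather than giving a genuinely new proof.
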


\noindent\textbf{\it{Proof of Proposition~\ref{Preli:5}.}}
We only need to  prove the conclusion  (i), because of the reasons given in the last paragraph in subsection 3.2.
 Suppose  that
$(H_1)$ holds. Then we have that (see (\ref{Preli:2-8}))
\begin{equation*}
\mbox{rank}(DB,\widetilde{A}DB,\dots,\widetilde{A}^{n-2}DB)=n-1.
\end{equation*}
This, together with Lemma~\ref{Preli:4} (where $O=\widetilde{A}, L=DB, k=n-1$ and $l=m$), implies (\ref{Preli:5-1}).
\hspace{145mm}$\Box$

\subsubsection{Properties of functionals}

Recall (\ref{Intro:12}) and (\ref{Intro:15}). We have descriptions on $Y_{T,1}$ and $Y_{T,2}$ as follows.

\begin{Lemma}\label{Norm:2}Let $T>0$. The following conclusions are true:
\begin{itemize}
\item[(i)] Suppose that $(H_1)$ holds. Then
\begin{equation}\label{20180205-2}
\begin{array}{l}
Y_{T,1}=\{\chi_\omega B^\top D^\top \bm \psi\in L^2(0,T;L^2(\omega)^m):\;
\bm \psi\in C([0,T);L^2(\Omega)^{n-1})\;\;\mbox{solves}\\
\;\;\;\;\;\;\;\;\;\;\;\mbox{(\ref{Intro:11}) and}\;\;
\chi_\omega B^\top D^\top \bm \psi(\cdot;T,\bm z_i)\rightarrow \chi_\omega B^\top D^\top \bm \psi(\cdot)
\;\;\mbox{strongly in}\\
\;\;\;\;\;\;\;\;\;\;\;\;\;L^2(0,T;L^2(\omega)^m)\;\;
\mbox{for some sequence}\;\;\{\bm z_i\}_{i\geq 1}\subseteq L^2(\Omega)^{n-1}\}.
\end{array}
\end{equation}
Moreover, there exists a positive constant $C(T)$ so that
\begin{equation}\label{Norm:2-2}
\|\bm \psi(0)\|_{L^2(\Omega)^{n-1}}^2\leq C(T)\int_0^T \|\chi_\omega B^\top D^\top \bm \psi\|_{L^2(\omega)^m}^2\mathrm dt
\;\,\mbox{for all}\;\;\chi_\omega B^\top D^\top \bm \psi\in Y_{T,1}.
\end{equation}
\item[(ii)] Suppose that  $(H_2)$ holds. Then
\begin{equation*}
\begin{array}{l}
Y_{T,2}=\{\chi_\omega B^\top  \bm \varphi\in L^2(0,T;L^2(\omega)^m):\;
\bm \varphi\in C([0,T);L^2(\Omega)^n)\;\;\mbox{solves}\\
\;\;\;\;\;\;\;\;\;\;\;\mbox{(\ref{Intro:14}) and}\;\;
\chi_\omega B^\top\bm \varphi(\cdot;T,\bm z_i)\rightarrow \chi_\omega B^\top \bm \varphi(\cdot)\;\;\mbox{strongly in}\\
\;\;\;\;\;\;\;\;\;\;\;\;\;L^2(0,T;L^2(\omega)^m)\;\;\mbox{for some sequence}\;\;
\{\bm z_i\}_{i\geq 1}\subseteq L^2(\Omega)^n\}.
\end{array}
\end{equation*}
Moreover, there exists a positive constant $C(T)$ so that
\begin{equation*}
\|\bm \varphi(0)\|_{L^2(\Omega)^n}^2\leq C(T)\int_0^T \|\chi_\omega B^\top \bm \varphi\|_{L^2(\omega)^m}^2\mathrm dt
\;\;\mbox{for all}\;\;\chi_\omega B^\top \bm \varphi\in Y_{T,2}.
\end{equation*}
\end{itemize}
\end{Lemma}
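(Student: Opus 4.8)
\noindent\textit{Proof proposal.} It suffices to prove conclusion (i); conclusion (ii) follows in exactly the same way, with the pair $(\widetilde A,DB)$ and the space $L^2(\Omega)^{n-1}$ replaced by $(A,B)$ and $L^2(\Omega)^n$ (see the discussion in subsection 3.2). Throughout I use that $\|\chi_\omega\bm g\|_{L^2(\Omega)^m}=\|\chi_\omega\bm g\|_{L^2(\omega)^m}$, so that the estimate in Proposition~\ref{Preli:5}(i) may be read with the $L^2(\omega)^m$-norm on its right-hand side. The inclusion ``$\supseteq$'' in (\ref{20180205-2}) is immediate: any $\chi_\omega B^\top D^\top\bm\psi$ which is the $L^2(0,T;L^2(\omega)^m)$-limit of a sequence $\chi_\omega B^\top D^\top\bm\psi(\cdot;T,\bm z_i)\in X_{T,1}$ lies in $\overline{X_{T,1}}=Y_{T,1}$.

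For ``$\subseteq$'', fix $f\in Y_{T,1}$ and choose $\{\bm z_i\}_{i\geq1}\subseteq L^2(\Omega)^{n-1}$ with $\chi_\omega B^\top D^\top\bm\psi(\cdot;T,\bm z_i)\to f$ in $L^2(0,T;L^2(\omega)^m)$, so this sequence is Cauchy there. The key step is to localise the observability estimate near $t=T$: restricting a solution of (\ref{Intro:11}) to $[T-\varepsilon,T]$ and translating in time turns it into a solution of (\ref{Intro:11}) on $(0,\varepsilon)$ with final datum its value at $T$, so Proposition~\ref{Preli:5}(i), applied with $T$ replaced by $\varepsilon$, gives for each $\varepsilon\in(0,T)$ a constant $C(\varepsilon)$ with
\[
\|\bm\psi(T-\varepsilon;T,\bm\psi_T)\|_{L^2(\Omega)^{n-1}}^2\le C(\varepsilon)\int_{T-\varepsilon}^T\|\chi_\omega B^\top D^\top\bm\psi(t;T,\bm\psi_T)\|_{L^2(\omega)^m}^2\,\mathrm dt
\]
for all $\bm\psi_T\in L^2(\Omega)^{n-1}$. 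On the other hand, rewriting (\ref{Intro:11}) as a forward heat system in the time variable $T-t$ and using the standard dissipativity estimate gives $\sup_{t\in[0,T-\varepsilon]}\|\bm\psi(t;T,\bm\psi_T)\|_{L^2(\Omega)^{n-1}}\le C(T)\|\bm\psi(T-\varepsilon;T,\bm\psi_T)\|_{L^2(\Omega)^{n-1}}$ with $C(T)$ independent of $\varepsilon$. Combining the two displays with $\bm\psi_T=\bm z_i-\bm z_j$ shows that $\{\bm\psi(\cdot;T,\bm z_i)\}$ is Cauchy in $C([0,T-\varepsilon];L^2(\Omega)^{n-1})$ for every $\varepsilon\in(0,T)$; a diagonal argument over $\varepsilon=1/k$ then produces $\bm\psi\in C([0,T);L^2(\Omega)^{n-1})$ to which $\bm\psi(\cdot;T,\bm z_i)$ converges uniformly on every compact subinterval of $[0,T)$. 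Since each $\bm\psi(\cdot;T,\bm z_i)$ restricted to $[0,T-\varepsilon]$ is the backward solution of (\ref{Intro:11}) with final datum $\bm\psi(T-\varepsilon;T,\bm z_i)\to\bm\psi(T-\varepsilon)$ in $L^2(\Omega)^{n-1}$, continuous dependence of that solution on its final datum shows that $\bm\psi$ solves (\ref{Intro:11}) on $(0,T-\varepsilon)$, hence on $(0,T)$. Finally, $\chi_\omega B^\top D^\top\bm\psi(\cdot;T,\bm z_i)$ converges both to $f$ in $L^2(0,T;L^2(\omega)^m)$ and to $\chi_\omega B^\top D^\top\bm\psi$ locally uniformly on $[0,T)$, so $f=\chi_\omega B^\top D^\top\bm\psi$ a.e.\ on $(0,T)$, which lies in the right-hand side of (\ref{20180205-2}). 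This establishes (\ref{20180205-2}).

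For (\ref{Norm:2-2}), let $\chi_\omega B^\top D^\top\bm\psi\in Y_{T,1}$ with approximating sequence $\{\bm z_i\}$ as above. Proposition~\ref{Preli:5}(i) gives $\|\bm\psi(0;T,\bm z_i)\|_{L^2(\Omega)^{n-1}}^2\le C(T)\int_0^T\|\chi_\omega B^\top D^\top\bm\psi(\cdot;T,\bm z_i)\|_{L^2(\omega)^m}^2\,\mathrm dt$; letting $i\to\infty$, using $\bm\psi(0;T,\bm z_i)\to\bm\psi(0)$ in $L^2(\Omega)^{n-1}$ (the value at $t=0$ of the uniform convergence above) together with the $L^2(0,T;L^2(\omega)^m)$-convergence of the observations, yields (\ref{Norm:2-2}) with the same constant $C(T)$.

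The main obstacle is the one the statement already hints at. The backward solution map $\bm\psi_T\mapsto\bm\psi(0;T,\bm\psi_T)$ is injective but not surjective onto $L^2(\Omega)^{n-1}$, so the limit $\bm\psi(0)$ need not be the value at $t=0$ of any $L^2$-solution of (\ref{Intro:11}) on the \emph{closed} interval $[0,T]$; this is why one only obtains $\bm\psi\in C([0,T);L^2(\Omega)^{n-1})$, and why the convergence of $\bm\psi(\cdot;T,\bm z_i)$ at interior times must be extracted through a localised observability estimate (combined with the one-sided dissipativity bound) rather than by solving forward from $\bm\psi(0)$, which would be ill-posed.
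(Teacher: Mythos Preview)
Your proof is correct and follows essentially the same strategy as the paper: localise the observability inequality of Proposition~\ref{Preli:5}(i) to a neighbourhood of $t=T$, use it to control the solutions on compact subintervals of $[0,T)$, glue the resulting limits into a single $\bm\psi\in C([0,T);L^2(\Omega)^{n-1})$, and then pass to the limit in the global observability estimate to obtain (\ref{Norm:2-2}). The one notable difference is in the implementation of the limiting step: the paper argues via boundedness, invoking parabolic $L^2$-regularity and Arzel\`a--Ascoli to extract convergent subsequences on each $[0,T_\ell]$, and then uses a subsequence--subsequence argument (together with the observability estimate once more) to conclude that the full sequence converges; you instead exploit directly that the approximating observations form a Cauchy sequence, combine the localised observability bound with the forward energy estimate for the reversed-time system, and deduce that $\{\bm\psi(\cdot;T,\bm z_i)\}$ is Cauchy in $C([0,T-\varepsilon];L^2(\Omega)^{n-1})$. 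Your route is a little more elementary---it bypasses the compactness machinery and the second appeal to observability for uniqueness---but both arguments rest on the same key input and yield the same conclusion.
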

The proof of Lemma~\ref{Norm:2} is similar to that of Lemma 2.1 in \cite{Wang-Xu-Zhang}. For the sake of
completeness, we give its detailed proof below.

\vskip 5pt

\noindent\textbf{\it{Proof of Lemma~\ref{Norm:2}.}}
We only need to  prove the conclusion  (i), because of the reasons given in the last paragraph in subsection 3.2.
Let $\bm \xi\in Y_{T,1}$. According to (\ref{Intro:12}), there exists a
sequence $\{\bm z_i\}_{i\geq 1}\subseteq L^2(\Omega)^{n-1}$ so that
\begin{equation}\label{Norm:2-3}
\chi_\omega B^\top D^\top \bm \psi(\cdot;T,\bm z_i)\rightarrow \bm \xi\;\;
\mbox{strongly in}\;\;L^2(0,T;L^2(\omega)^m).
\end{equation}
Here $\bm \psi(\cdot;T,\bm z_i)$ denotes the unique solution to (\ref{Intro:11}) with the initial condition
$\bm \psi(T)=\bm z_i$. From (\ref{Norm:2-3}) it follows that $\{\chi_\omega B^\top D^\top \bm \psi(\cdot;T,\bm z_i)\}_{i\geq 1}$
is bounded in $L^2(0,T;L^2(\omega)^m)$. Let $\{T_\ell\}_{\ell\geq 1}\subseteq (0,T)$ be such that
$T_\ell\nearrow T$. Arbitrarily fix  $\ell\geq 1$. By (\ref{Preli:5-1}) in Proposition~\ref{Preli:5}, we have that for all $i\geq 1$,
\begin{equation}\label{Norm:2-4}
\|\bm \psi(T_{\ell+1};T,\bm z_i)\|_{L^2(\Omega)^{n-1}}^2
\leq C(T,T_{\ell+1})\int_{T_{\ell+1}}^T \|\chi_\omega B^\top D^\top \bm \psi(t;T,\bm z_i)\|_{L^2(\omega)^m}^2\mathrm dt
\leq C(\ell).
\end{equation}
We arbitrarily take two subsequences $\{\bm z_{1,i}\}_{i\geq 1}$
and $\{\bm z_{2,i}\}_{i\geq 1}$ from $\{\bm z_{i}\}_{i\geq 1}$. Then,
by  (\ref{Norm:2-4}) and by using the standard $L^2$-theory for parabolic system and Arzel\`{a}-Ascoli theorem to
 $\{\bm \psi(\cdot;T,\bm z_{1,i})\}_{i\geq 1}$ and $\{\bm \psi(\cdot;T,\bm z_{2,i})\}_{\geq 1}$, we can easily see that
there are respectively two subsequences of $\{\bm z_{1,i}\}_{i\geq 1}$
and $\{\bm z_{2,i}\}_{i\geq 1}$, still denoted by themselves,
so that
\begin{equation*}
\bm \psi(\cdot;T,\bm z_{1,i})\rightarrow \bm \psi_{\ell,1}\;\;
\mbox{and}\;\;\bm \psi(\cdot;T,\bm z_{2,i})\rightarrow \bm \psi_{\ell,2}
\;\;\mbox{strongly in}\;\;C([0,T_\ell];L^2(\Omega)^{n-1}),
\end{equation*}
where $\bm \psi_{\ell,1}(\cdot)$ and $\bm \psi_{\ell,2}(\cdot)$ are solutions to (\ref{Intro:11})
(where $T$ is replaced by $T_\ell$). These, together with (\ref{Norm:2-3}), imply that
\begin{equation*}
\chi_\omega B^\top D^\top \bm \psi_{\ell,1}(t)=\chi_\omega B^\top D^\top \bm \psi_{\ell,2}(t)=\bm \xi(t)
\;\;\mbox{for a.e.}\;\;t\in (0,T_\ell).
\end{equation*}
Hence, by (\ref{Preli:5-1}) in Proposition~\ref{Preli:5}, we get that
\begin{equation*}
\bm \psi_\ell(t)\triangleq \bm \psi_{\ell,1}(t)=\bm \psi_{\ell,2}(t)\;\;\mbox{for each}\;\;t\in [0,T_\ell].
\end{equation*}
Then
\begin{equation}\label{Norm:2-5}
\bm \psi(\cdot;T,\bm z_i)\rightarrow \bm \psi_\ell(\cdot)\;\;\mbox{strongly in}\;\;
C([0,T_\ell];L^2(\Omega)^{n-1})
\end{equation}
and
\begin{equation}\label{Norm:2-5(1)}
\bm \xi(t)=\chi_\omega B^\top D^\top \bm \psi_\ell(t)\;\;\mbox{for a.e.}\;\;t\in (0,T_\ell).
\end{equation}
Since $\ell\geq 1$ was arbitrarily taken, it follows from (\ref{Norm:2-5}) that
\begin{equation}\label{Norm:2-6}
\bm \psi_\ell(t)=\bm \psi_{\ell+\ell^\prime}(t)\;\;\mbox{for each}\;\;t\in [0,T_\ell]\;\;
\mbox{and}\;\;\ell^\prime\geq 1.
\end{equation}

We now define
\begin{equation}\label{Norm:2-7}
\bm \psi(t)\triangleq \bm \psi_\ell(t)\;\;\mbox{for all}\;\;t\in [0,T_\ell].
\end{equation}
Then,
according to (\ref{Norm:2-5(1)})-(\ref{Norm:2-7}), $\bm \psi(\cdot)$ is well defined,
$\bm \psi(\cdot)\in C([0,T);L^2(\Omega)^{n-1})$ solves (\ref{Intro:11}) and
$\bm \xi(t)=\chi_\omega B^\top D^\top \bm \psi(t)$ for a.e. $t\in (0,T)$.
The rest is to show that the above $\bm \psi$ satisfies (\ref{Norm:2-2}).
To this end, we use  (\ref{Norm:2-3}), (\ref{Norm:2-5}) and (\ref{Norm:2-7}) to find that
\begin{equation}\label{Norm:2-8}
\chi_\omega B^\top D^\top \bm \psi(\cdot;T,\bm z_i)\rightarrow \chi_\omega B^\top D^\top \bm \psi(\cdot)
\;\;\mbox{strongly in}\;\;L^2(0,T;L^2(\omega)^m)
\end{equation}
and
\begin{equation}\label{Norm:2-9}
\bm \psi(0;T,\bm z_i)\rightarrow \bm \psi(0)\;\;\mbox{strongly in}\;\;L^2(\Omega).
\end{equation}
Meanwhile, we use  (\ref{Preli:5-1}) of Proposition~\ref{Preli:5} to obtain that
\begin{equation*}
\|\bm \psi(0;T,\bm z_i)\|_{L^2(\Omega)^{n-1}}^2\leq
C(T)\int_0^T \|\chi_\omega B^\top D^\top \bm \psi(t;T,\bm z_i)\|_{L^2(\omega)^m}^2\mathrm dt.
\end{equation*}
Finally, by passing to the limit for $i\rightarrow +\infty$ in the latter inequality,
using  (\ref{Norm:2-8}) and (\ref{Norm:2-9}), we obtain (\ref{Norm:2-2}).

Thus, we finish the proof of Lemma \ref{Norm:2}.
\hspace{76mm}$\Box$

\vskip 5pt

Recall (\ref{Intro:13}) and (\ref{Intro:16}).
Based on Lemma~\ref{Norm:2}, we have the following corollary.
\begin{Corollary}\label{20180205-1}
Let $T>0$ and let $\bm y_0\in L^2(\Omega)^n$. The following conclusions are true:
\begin{itemize}
  \item[$(i)$] Suppose that $(H_1)$ holds. Then $J_{T,1}^{\bm y_0}$
  is well defined.
  \item[$(ii)$] Suppose that $(H_2)$ holds. Then $J_{T,2}^{\bm y_0}$
  is well defined.
\end{itemize}
\end{Corollary}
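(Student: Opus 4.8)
The plan is to show that the functionals $J_{T,1}^{\bm y_0}$ and $J_{T,2}^{\bm y_0}$ are well defined, i.e., their values do not depend on the representation of an element of $Y_{T,1}$ (resp. $Y_{T,2}$) as $\chi_\omega B^\top D^\top\bm\psi$ (resp. $\chi_\omega B^\top\bm\varphi$). As usual it suffices to treat $(i)$, the case $(H_2)$ being entirely analogous. So fix $T>0$ and $\bm y_0\in L^2(\Omega)^n$, and suppose $\chi_\omega B^\top D^\top\bm\psi$ and $\chi_\omega B^\top D^\top\bm{\widetilde\psi}$ are two elements of $Y_{T,1}$ that coincide as elements of $L^2(0,T;L^2(\omega)^m)$, where $\bm\psi,\bm{\widetilde\psi}\in C([0,T);L^2(\Omega)^{n-1})$ both solve (\ref{Intro:11}) in the sense described in (\ref{20180205-2}). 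In view of the definition (\ref{Intro:13}), the only thing that needs checking is that $\bm\psi(0)=\bm{\widetilde\psi}(0)$ in $L^2(\Omega)^{n-1}$; the integral term is manifestly the same since the two integrands agree a.e.

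The key step is the observability-type bound (\ref{Norm:2-2}) from part $(i)$ of Lemma \ref{Norm:2}. First I would note that the difference $\bm\psi-\bm{\widetilde\psi}$ again solves (\ref{Intro:11}) (the equation is linear), but one must be a little careful: the set $Y_{T,1}$ need not be linear a priori, so to apply (\ref{Norm:2-2}) to $\bm\psi-\bm{\widetilde\psi}$ I would argue directly on the approximating sequences. By (\ref{20180205-2}), choose $\{\bm z_i\}_{i\ge1},\{\bm{\widetilde z}_i\}_{i\ge1}\subseteq L^2(\Omega)^{n-1}$ with $\chi_\omega B^\top D^\top\bm\psi(\cdot;T,\bm z_i)\to\chi_\omega B^\top D^\top\bm\psi$ and $\chi_\omega B^\top D^\top\bm\psi(\cdot;T,\bm{\widetilde z}_i)\to\chi_\omega B^\top D^\top\bm{\widetilde\psi}$ strongly in $L^2(0,T;L^2(\omega)^m)$. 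By linearity, $\bm\psi(\cdot;T,\bm z_i)-\bm\psi(\cdot;T,\bm{\widetilde z}_i)=\bm\psi(\cdot;T,\bm z_i-\bm{\widetilde z}_i)$, and applying (\ref{Preli:5-1}) of Proposition \ref{Preli:5} to the initial datum $\bm z_i-\bm{\widetilde z}_i$ gives
\begin{equation*}
\|\bm\psi(0;T,\bm z_i)-\bm\psi(0;T,\bm{\widetilde z}_i)\|_{L^2(\Omega)^{n-1}}^2
\le C(T)\int_0^T\|\chi_\omega B^\top D^\top\bm\psi(t;T,\bm z_i)-\chi_\omega B^\top D^\top\bm\psi(t;T,\bm{\widetilde z}_i)\|_{L^2(\omega)^m}^2\,\mathrm dt.
\end{equation*}
The right-hand side tends to $0$ as $i\to+\infty$, since both sequences converge in $L^2(0,T;L^2(\omega)^m)$ to the same limit. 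On the other hand, from the proof of Lemma \ref{Norm:2} (see (\ref{Norm:2-9})) we know $\bm\psi(0;T,\bm z_i)\to\bm\psi(0)$ and $\bm\psi(0;T,\bm{\widetilde z}_i)\to\bm{\widetilde\psi}(0)$ strongly in $L^2(\Omega)^{n-1}$. Passing to the limit, $\|\bm\psi(0)-\bm{\widetilde\psi}(0)\|_{L^2(\Omega)^{n-1}}=0$, hence $\bm\psi(0)=\bm{\widetilde\psi}(0)$, and therefore $J_{T,1}^{\bm y_0}(\chi_\omega B^\top D^\top\bm\psi)=J_{T,1}^{\bm y_0}(\chi_\omega B^\top D^\top\bm{\widetilde\psi})$. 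This proves $(i)$; replacing $\widetilde A$ by $A$, $D^\top$ by the identity, $n-1$ by $n$, and using part $(ii)$ of Lemma \ref{Norm:2} and part $(ii)$ of Proposition \ref{Preli:5} gives $(ii)$.

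I expect the main (minor) obstacle to be the bookkeeping around the non-linearity of $Y_{T,1}$: one cannot simply say "apply (\ref{Norm:2-2}) to $\bm\psi-\bm{\widetilde\psi}$'' because a representing approximating sequence for the difference is not immediately at hand — it must be produced as the difference of the two given sequences, which is exactly what the linearity of (\ref{Intro:11}) and of the map $\bm\psi_T\mapsto\chi_\omega B^\top D^\top\bm\psi(\cdot;T,\bm\psi_T)$ allows. Once that is observed, everything reduces to the observability inequality already established and the strong $L^2(\Omega)^{n-1}$-convergence of the traces at $t=0$ extracted in the proof of Lemma \ref{Norm:2}.
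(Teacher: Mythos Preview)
Your proof is correct and follows essentially the same route as the paper's: both reduce well-definedness to showing $\bm\psi(0)=\bm{\widetilde\psi}(0)$ via the observability estimate from Lemma~\ref{Norm:2}(i), with you spelling out the approximation argument that the paper compresses into a single sentence. One small remark: $Y_{T,1}$ \emph{is} linear a priori (it is the closure of the linear subspace $X_{T,1}$), so once you note that $\{\bm z_i-\bm{\widetilde z}_i\}$ is an admissible approximating sequence for $\chi_\omega B^\top D^\top(\bm\psi-\bm{\widetilde\psi})$, you may apply (\ref{Norm:2-2}) directly to the difference rather than going back to Proposition~\ref{Preli:5}; either way the argument is sound.
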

\begin{proof} We only show the conclusion  (i).  Let $\bm \psi_1, \bm\psi_2\in
C([0,T); L^2(\Omega)^{n-1})$ satisfy that  $\chi_\omega B^\top D^\top \bm \psi_1, \chi_\omega B^\top D^\top \bm \psi_2\in Y_{T,1}$ and  $\chi_\omega B^\top D^\top \bm \psi_1=\chi_\omega B^\top D^\top \bm \psi_2$.
Then by (i) of Lemma~\ref{Norm:2},
we see that $\bm \psi_1(0)=\bm \psi_2(0)$ in $L^2(\Omega)^{n-1}$.
Thus, we have that
$$
J_{T,1}^{y_0}(\chi_\omega B^\top D^\top \bm \psi_1)= J_{T,1}^{y_0}(\chi_\omega B^\top D^\top \bm \psi_2).
$$
Hence, $J_{T,1}^{y_0}$ is well defined.
This completes the proof.
\end{proof}

The next lemma presents  some properties of the functionals $J_{T,1}^{\bm y_0}$ and $J_{T,2}^{\bm y_0}$.
\begin{Lemma}\label{Norm:5}
Let $T>0$ and let $\bm y_0\in L^2(\Omega)^n$. The following conclusions are true:
\begin{itemize}
\item[(i)] Suppose that $(H_1)$ holds.
If $\bm y_0\in S$, then $J_{T,1}^{\bm y_0}$ has the unique minimizer $\bm 0$ in $Y_{T,1}$;
If $\bm y_0\not\in S$, then $J_{T,1}^{\bm y_0}$
 has a unique minimizer (in $Y_{T,1}$) which is nontrivial.
\item[(ii)] Suppose that $(H_2)$ holds.
If $\bm y_0=\bm 0$, then $J_{T,2}^{\bm y_0}$ has the unique minimizer $\bm 0$ in $Y_{T,2}$;
If $\bm y_0\not=\bm 0$, then $J_{T,2}^{\bm y_0}$
 has a unique minimizer (in $Y_{T,2}$) which is nontrivial.
\end{itemize}
\end{Lemma}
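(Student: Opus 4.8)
The plan is to realize $J_{T,1}^{\bm y_0}$ (and likewise $J_{T,2}^{\bm y_0}$) as a strictly convex, continuous and coercive functional on a Hilbert space, obtain existence and uniqueness of a minimizer by the direct method, and then decide whether the minimizer is trivial according to whether the linear part of the functional vanishes. Following the convention at the end of subsection 3.2, I would write out conclusion $(i)$ only; conclusion $(ii)$ is proved by the verbatim argument with $D^\top\bm\psi$, $\widetilde A$, $Y_{T,1}$, $J_{T,1}^{\bm y_0}$ and the condition ``$\bm y_0\in S$'' replaced by $\bm\varphi$, $A$, $Y_{T,2}$, $J_{T,2}^{\bm y_0}$ and ``$\bm y_0=\bm 0$'', invoking part (ii) of Proposition~\ref{Preli:5} and of Lemma~\ref{Norm:2} in place of part (i).

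First I would collect the structural facts. By (\ref{Intro:12}) the space $Y_{T,1}$ is a closed subspace of $L^2(0,T;L^2(\omega)^m)$, hence a Hilbert space, and $J_{T,1}^{\bm y_0}$ is well defined on it by (i) of Corollary~\ref{20180205-1}. Writing a generic element of $Y_{T,1}$ as $\xi=\chi_\omega B^\top D^\top\bm\psi$, the estimate (\ref{Norm:2-2}) gives $\|\bm\psi(0)\|_{L^2(\Omega)^{n-1}}\le\sqrt{C(T)}\,\|\xi\|_{L^2(0,T;L^2(\omega)^m)}$, so $|\langle\bm\psi(0),D\bm y_0\rangle_{L^2(\Omega)^{n-1}}|\le\sqrt{C(T)}\,\|D\bm y_0\|_{L^2(\Omega)^{n-1}}\|\xi\|_{L^2(0,T;L^2(\omega)^m)}$; thus the affine part of $J_{T,1}^{\bm y_0}$ is a bounded linear functional of $\xi$, and $J_{T,1}^{\bm y_0}$ is continuous. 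It is strictly convex, being the sum of the strictly convex quadratic $\xi\mapsto\frac12\|\xi\|^2$ (parallelogram law) and a linear term, and it is coercive since the previous bound gives $J_{T,1}^{\bm y_0}(\xi)\ge\frac12\|\xi\|^2-\sqrt{C(T)}\,\|D\bm y_0\|\,\|\xi\|\to+\infty$ as $\|\xi\|\to+\infty$. Hence the direct method yields a minimizer, unique by strict convexity; denote it $\xi^*$, which by (i) of Lemma~\ref{Norm:2} has the form $\xi^*=\chi_\omega B^\top D^\top\bm\psi^*$ with $\bm\psi^*\in C([0,T);L^2(\Omega)^{n-1})$ solving (\ref{Intro:11}).

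Next I would split into cases. If $\bm y_0\in S$, then $D\bm y_0=\bm 0$ by note $(a_2)$, the linear term disappears, $J_{T,1}^{\bm y_0}(\xi)=\frac12\|\xi\|^2\ge0$ with equality only at $\xi=\bm 0$, so $\bm 0$ is the unique minimizer. If $\bm y_0\notin S$, then $D\bm y_0\ne\bm 0$, and it suffices to produce one $\eta\in Y_{T,1}$ with $J_{T,1}^{\bm y_0}(\eta)<0$, since then $\min_{Y_{T,1}}J_{T,1}^{\bm y_0}<0=J_{T,1}^{\bm y_0}(\bm 0)$ forces $\xi^*\ne\bm 0$. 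For this it is enough to find $\bm\psi_T\in L^2(\Omega)^{n-1}$ with $\langle\bm\psi(0;T,\bm\psi_T),D\bm y_0\rangle_{L^2(\Omega)^{n-1}}<0$, because then $\eta=\varepsilon\chi_\omega B^\top D^\top\bm\psi(\cdot;T,\bm\psi_T)\in X_{T,1}\subseteq Y_{T,1}$ satisfies $J_{T,1}^{\bm y_0}(\eta)=\frac{\varepsilon^2}{2}\|\chi_\omega B^\top D^\top\bm\psi(\cdot;T,\bm\psi_T)\|^2+\varepsilon\langle\bm\psi(0;T,\bm\psi_T),D\bm y_0\rangle<0$ for $\varepsilon>0$ small. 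Such a $\bm\psi_T$ exists unless $D\bm y_0$ is orthogonal to every $\bm\psi(0;T,\bm\psi_T)$; but in that case the duality identity between the uncontrolled system (\ref{wang1.6-1}) with initial state $D\bm y_0$ and the adjoint system (\ref{Intro:11}) gives $\langle\bm z(T;D\bm y_0,\bm 0),\bm\psi_T\rangle=\langle D\bm y_0,\bm\psi(0;T,\bm\psi_T)\rangle=0$ for all $\bm\psi_T$, hence $\bm z(T;D\bm y_0,\bm 0)=\bm 0$, and backward uniqueness for the parabolic system (\ref{wang1.6-1}) forces $D\bm y_0=\bm z(0;D\bm y_0,\bm 0)=\bm 0$, a contradiction.

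I expect the only genuinely non-routine point to be this last step: showing that the linear part of $J_{T,1}^{\bm y_0}$ is not identically zero when $D\bm y_0\ne\bm 0$, i.e. that $\{\bm\psi(0;T,\bm\psi_T):\bm\psi_T\in L^2(\Omega)^{n-1}\}$ is dense in $L^2(\Omega)^{n-1}$; this rests on backward uniqueness for the uncontrolled parabolic system (equivalently on approximate controllability, which is itself a consequence of the null controllability supplied by Lemma~\ref{Preli:1}). The remaining ingredients are standard: the observability estimate of Lemma~\ref{Norm:2} is exactly what makes $J_{T,1}^{\bm y_0}$ continuous and coercive, and the rest is the usual variational argument on a Hilbert space.
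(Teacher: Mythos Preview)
Your proposal is correct and follows essentially the same route as the paper: existence and uniqueness of the minimizer via continuity, strict convexity and coercivity on the Hilbert space $Y_{T,1}$ (using the observability estimate (\ref{Norm:2-2})), and nontriviality in the case $D\bm y_0\neq\bm 0$ via the duality identity between (\ref{wang1.6-1}) and (\ref{Intro:11}) together with backward uniqueness for parabolic systems. The only cosmetic difference is that the paper argues nontriviality by contradiction (assuming the minimizer is $\bm 0$ and deducing $\langle\bm\psi(0),D\bm y_0\rangle=0$ for all $\bm\psi$), whereas you exhibit directly an element with negative value; the underlying duality computation and invocation of backward uniqueness are identical.
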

\begin{proof} We only need to  prove the conclusion  (i), because of the reasons given in the last paragraph in subsection 3.2.

First, we suppose that   $\bm y_0\in S$. Then by the definition of $S$ and
  (\ref{wang1.3}), we have that  $D\bm y_0=\bm 0$.
This, along with
(\ref{Intro:13}) and (\ref{20180205-2}), yields that  $\bm 0$ is the unique minimizer of
$J_{T,1}^{\bm y_0}$ in $Y_{T,1}$.

Next, we suppose that  $\bm y_0\not\in S$. Then by the definition of $S$ and
  (\ref{wang1.3}), we have that   $D\bm y_0\not=\bm 0$.
Since $L^2(0,T;L^2(\omega)^m)$
is reflexive, $Y_{T,1}$, as a closed subspace of $L^2(0,T;L^2(\omega)^m)$ (see (\ref{Intro:12})),
is also reflexive. Meanwhile, by (\ref{Intro:13}) and (i) of Lemma~\ref{Norm:2}, we see that
\begin{equation*}
J_{T,1}^{\bm y_0}: Y_{T,1}\rightarrow \mathbb{R}\;\;\mbox{is continuous, strictly convex and coercive in}\;\;Y_{T,1}.
\end{equation*}
Hence, $J_{T,1}^{\bm y_0}: Y_{T,1}\rightarrow \mathbb{R}$
has a unique minimizer. By (\ref{20180205-2}), this minimizer can be expressed
by $\chi_\omega B^\top D^\top \bm \psi^*\in L^2(0,T;L^2(\omega)^m)$ for some $\bm \psi^*\in C([0,T);L^2(\Omega)^{n-1})$
solving (\ref{Intro:11}).

To end the proof of this lemma, we now only need to prove
that  $\chi_\omega B^\top D^\top \bm \psi^*(\cdot)\not=\bm 0$.
By contradiction,
we suppose that $\chi_\omega B^\top D^\top \bm \psi^*(\cdot)=\bm 0$.
Then we would have that
\begin{equation*}
J_{T,1}^{\bm y_0}(\chi_\omega B^\top D^\top (\alpha \bm \psi))\geq 0\;\;
\mbox{for all}\;\;\alpha\in \mathbb{R}\;\;\mbox{and}\;\;\chi_\omega B^\top D^\top \bm \psi\in X_{T,1}.
\end{equation*}
(Here we used the fact that  $X_{T,1}$ is a subspace of $Y_{T,1}$, see (\ref{Intro:12}).)
From the latter and (\ref{Intro:13}) it follows that
\begin{equation}\label{Norm:5-2}
\langle \bm \psi(0), D\bm y_0\rangle_{L^2(\Omega)^{n-1}}=0\;\;\mbox{for each}
\;\;\chi_\omega B^\top D^\top \bm \psi\in X_{T,1}.
\end{equation}
Let $\bm z$ be the solution to the system:
\begin{equation}\label{Norm:5-3}
\left\{
\begin{array}{lll}
\bm z_t-\Delta \bm z+\widetilde A \bm z=\bm 0&\mbox{in}&\Omega\times (0,T),\\
\bm z=\bm 0&\mbox{on}&\partial\Omega\times (0,T),\\
\bm z(0)=D \bm y_0&\mbox{in}&\Omega.
\end{array}\right.
\end{equation}
Then let $\widetilde{\bm \psi}$ be the solution to the system:
\begin{equation}\label{Norm:5-4}
\left\{
\begin{array}{lll}
\widetilde{\bm \psi}_t+\Delta \widetilde{\bm \psi}-\widetilde A^\top \widetilde{\bm \psi}=\bm 0&\mbox{in}&\Omega\times (0,T),\\
\widetilde{\bm \psi}=\bm 0&\mbox{on}&\partial\Omega\times (0,T),\\
\widetilde{\bm \psi}(T)=\bm z(T)&\mbox{in}&\Omega.
\end{array}\right.
\end{equation}
Since $\bm y_0\in L^2(\Omega)^n$, one can easily use (\ref{Intro:12})
to obtain that  $\chi_\omega B^\top D^\top \widetilde{\bm \psi}\in X_{T,1}$.
Multiplying (\ref{Norm:5-3}) by $\widetilde{\bm \psi}$, then integrating it over $\Omega\times (0,T)$, and then using (\ref{Norm:5-4}),
we obtain that
\begin{equation}\label{Norm:5-5}
\|\bm z(T)\|_{L^2(\Omega)^{n-1}}^2=\langle \widetilde{\bm \psi}(0),D\bm y_0\rangle_{L^2(\Omega)^{n-1}}.
\end{equation}
It follows from (\ref{Norm:5-5}) and (\ref{Norm:5-2}) that $\bm z(T)=\bm 0$. This,
along with (\ref{Norm:5-3}) and the backward uniqueness of parabolic system
(see, for instance,  \cite{Bardos-Tartar}), implies that $D \bm y_0=\bm 0$,
which leads to a contradiction.

\vskip 5pt

In summary, we finish the proof of Lemma \ref{Norm:5}.
\end{proof}

\subsubsection{Proof of Theorem \ref{Norm:6}}
We only need to  prove the conclusion  (i), because of the reasons given in the last paragraph in subsection 3.2.
Notice that we have transformed the problem $(NP)_T^{\bm y_0}$
into the problem $(\widetilde{NP})_T^{\bm y_0}$ under the assumption $(H_1)$ (see (ii) of Theorem~\ref{20180225-1}).

First, we suppose that  $\bm y_0\in S$. Then we have that $D\bm y_0=\bm 0$.
 Hence, $\bm 0$ is the unique optimal control to the problem $(\widetilde{NP})_T^{\bm y_0}$.
 Consequently, $\bm 0$ is the unique optimal control to
$(NP)_T^{\bm y_0}$.

Next, we suppose that  $\bm y_0\not\in S$.
By (i) of Lemma~\ref{Norm:5} and (i) of Lemma \ref{Norm:2}                                                                                                                                                                               , we can let  $\chi_\omega B^\top D^\top \bm\psi^*$
be the unique minimizer of $J_{T,1}^{y_0}$. Let $\bm v^*$ be given by (\ref{Norm:6-1}).
The rest of the proof is organized by  four steps.

\vskip 5pt

Step 1. We show that $\bm v^*(\cdot)$ is admissible for $(NP)_T^{\bm y_0}$.

Since $\chi_\omega B^\top D^\top \bm\psi^*$ is the unique minimizer of
$J_{T,1}^{\bm y_0}$, we have that
for each $\lambda>0$ and $\chi_\omega B^\top D^\top \bm \psi\in Y_{T,1}$,
\begin{equation*}
[J_{T,1}^{\bm y_0}(\chi_\omega B^\top D^\top \bm\psi^*+\lambda\chi_\omega B^\top D^\top \bm\psi)
-J_{T,1}^{\bm y_0}(\chi_\omega B^\top D^\top \bm\psi^*)]/\lambda\geq 0.
\end{equation*}
 Passing to the limit for $\lambda\rightarrow 0^+$ in the above,
using (\ref{Intro:13}), we obtain, after some simple calculations,
 that
for each $\chi_\omega B^\top D^\top \bm \psi\in Y_{T,1}$,
\begin{equation}\label{Norm:6-3}
\int_0^T \langle \chi_\omega B^\top D^\top \bm \psi^*,
\chi_\omega B^\top D^\top \bm \psi\rangle_{L^2(\omega)^m}\mathrm dt
+\langle \bm\psi(0),D\bm y_0\rangle_{L^2(\Omega)^{n-1}}=0.
\end{equation}

Now we  arbitrarily fix $\bm \psi_T\in L^2(\Omega)^{n-1}$.
Let $\bm\psi(\cdot;T,\bm\psi_T)$ be the solution to the system:
\begin{equation}\label{Norm:6-5}
\left\{
\begin{array}{lll}
\bm \psi_t+\Delta \bm \psi-\widetilde A^\top \bm \psi=\bm 0&\mbox{in}&\Omega\times (0,T),\\
\bm \psi=\bm 0&\mbox{on}&\partial\Omega\times (0,T),\\
\bm \psi(T)=\bm \psi_T&\mbox{in}&\Omega.
\end{array}\right.
\end{equation}
Let $\bm z(\cdot;D\bm y_0,\bm v^*)$ be the solution to the system:
\begin{equation*}
\left\{
\begin{array}{lll}
\bm z_t-\Delta \bm z
+\widetilde A \bm z=
\chi_\omega D B\bm v^*&\mbox{in}&\Omega\times (0,+\infty),\\
\bm z=\bm 0&\mbox{on}&\partial\Omega\times (0,+\infty),\\
\bm z(0)=D\bm y_0&\mbox{in}&\Omega.
\end{array}\right.
\end{equation*}
Multiplying (\ref{Norm:6-5}) by $\bm z(\cdot;D\bm y_0,\bm v^*)$, then integrating it over $\Omega\times (0,T)$
and then using the second system, we obtain that
\begin{equation}\label{Norm:6-6}
\begin{array}{lll}
&&\langle \bm \psi_T,\bm z(T;D\bm y_0,\bm v^*)\rangle_{L^2(\Omega)^{n-1}}\\
&=&\langle \bm \psi(0;T,\bm\psi_T),D \bm y_0\rangle_{L^2(\Omega)^{n-1}}+
\displaystyle{\int_0^T} \langle \chi_\omega \bm v^*(t),\chi_\omega B^\top D^\top \bm \psi(t;T,\bm\psi_T)\rangle_{L^2(\omega)^m}\mathrm dt.
\end{array}
\end{equation}
Now it follows from (\ref{Norm:6-1}), (\ref{Norm:6-3}) and (\ref{Norm:6-6}) that
\begin{equation*}
\langle \bm \psi_T,\bm z(T;D\bm y_0,\bm v^*)\rangle_{L^2(\Omega)^{n-1}}=0\;\;\mbox{for all}\;\;
\bm \psi_T\in L^2(\Omega)^{n-1},
\end{equation*}
which indicates that $\bm z(T;D\bm y_0,\bm v^*)=\bm 0$. Thus, $\bm v^*$ is an admissible control to the problem $(\widetilde{NP})_T^{\bm y_0}$. Equivalently, it is an admissible control
to
$(NP)_T^{\bm y_0}$ (see (ii) of Theorem~\ref{20180225-1}).\\

Step 2. We prove that $\bm v^*(\cdot)$ is an optimal control to $(NP)_T^{\bm y_0}$.

We arbitrarily take an    admissible control  $\bm v$
to $(NP)_T^{\bm y_0}$.
Then it is an admissible control to the problem $(\widetilde{NP})_T^{\bm y_0}$ (see (ii) of Theorem~\ref{20180225-1}), which implies that
\begin{equation}\label{Norm:6-7}
\bm z(T;D\bm y_0,\bm v)=\bm 0.
\end{equation}
We aim to show that
\begin{equation}\label{Yuanyuan4.21}
\int_0^T \|\bm v^*\|_{L^2(\omega)^m}^2\mathrm dt
\leq \int_0^T \|\bm v\|_{L^2(\Omega)^m}^2\mathrm dt.
\end{equation}
When it is proved, we achieve the goal.

We now show (\ref{Yuanyuan4.21}). The fact that  $\chi_\omega B^\top D^\top \bm \psi^*\in Y_{T,1}$, along with  (\ref{Intro:12}), yields that  there exists a sequence
$\{\chi_\omega B^\top D^\top \bm \psi_\ell\}_{\ell\geq 1}\subseteq X_{T,1}\subseteq Y_{T,1}$ so that
\begin{equation}\label{Norm:6-7(1)}
\chi_\omega B^\top D^\top \bm \psi_\ell\rightarrow \chi_\omega B^\top D^\top \bm \psi^*\;\;
\mbox{strongly in}\;\;L^2(0,T;L^2(\omega)^m).
\end{equation}
This, along with (\ref{Norm:2-2}) in Lemma~\ref{Norm:2}, implies that
\begin{equation}\label{Norm:6-9}
\bm \psi_\ell(0)\rightarrow \bm \psi^*(0)\;\;\mbox{strongly in}\;\;L^2(\Omega)^{n-1}.
\end{equation}
 For each $\ell\geq 1$, according to (\ref{Intro:12}) and (\ref{Intro:11}),
$\bm \psi_\ell$ is the unique solution to (\ref{Norm:6-5}) where $\bm \psi_T=\bm \psi_\ell(T)\in L^2(\Omega)^{n-1}$.
Hence, by (\ref{Norm:6-7}), we can use the similar arguments to those used in the proof of  (\ref{Norm:6-6}) to obtain that
\begin{equation}\label{Norm:6-10}
-\langle \bm \psi_\ell(0),D \bm y_0\rangle_{L^2(\Omega)^{n-1}}=
\int_0^T \langle \chi_\omega \bm v,\chi_\omega B^\top D^\top\bm \psi_\ell\rangle_{L^2(\omega)^m}\mathrm dt.
\end{equation}
Passing to the limit for $\ell\rightarrow +\infty$ in (\ref{Norm:6-10}), and using
(\ref{Norm:6-7(1)}) and (\ref{Norm:6-9}), we get that
\begin{equation}\label{Norm:6-11}
-\langle \bm \psi^*(0),D \bm y_0\rangle_{L^2(\Omega)^{n-1}}=
\int_0^T \langle \chi_\omega \bm v,\chi_\omega B^\top D^\top\bm \psi^*\rangle_{L^2(\omega)^m}\mathrm dt.
\end{equation}
On the other hand, by  choosing $\chi_\omega B^\top D^\top \bm\psi=\chi_\omega B^\top D^\top \bm \psi^*$
in (\ref{Norm:6-3}), we find that
\begin{equation*}
\int_0^T \|\chi_\omega B^\top D^\top \bm \psi^*\|_{L^2(\omega)^m}^2\mathrm dt
=-\langle \bm\psi^*(0), D\bm y_0\rangle_{L^2(\Omega)^{n-1}}.
\end{equation*}
Finally, the above, combined with (\ref{Norm:6-11}), indicates that
\begin{equation*}
\int_0^T \|\chi_\omega B^\top D^\top \bm \psi^*\|_{L^2(\omega)^m}^2\mathrm dt
\leq \int_0^T \|\bm v\|_{L^2(\Omega)^m}^2\mathrm dt,
\end{equation*}
which, along with (\ref{Norm:6-1}),
 leads to (\ref{Yuanyuan4.21}).\\

Step 3. We show that $\bm v^*(\cdot)$ is the unique optimal control to $(NP)_T^{\bm y_0}$.

To this end, let $\widetilde{\bm v}$  be another optimal control to $(NP)_T^{\bm y_0}$. According to
 Step 2 and (ii) of Theorem~\ref{20180225-1}, $\bm v^*$ and $\widetilde{\bm v}$ are two
 optimal controls to $(\widetilde{NP})_T^{\bm y_0}$. Then we have that
\begin{equation}\label{Norm:1-7}
\|\bm v^*\|_{L^2(0,+\infty;L^2(\Omega)^m)}=\|\widetilde{\bm v}\|_{L^2(0,+\infty;L^2(\Omega)^m)}=\widetilde{N}(T,\bm y_0),
\end{equation}
\begin{equation*}
\bm v^*(t)=\widetilde{\bm v}(t)=\bm 0\;\;
\mbox{for a.e.}\;\;t>T
\end{equation*}
and
\begin{equation*}
\bm z(T;D\bm y_0,\bm v^*)=\bm z(T;D\bm y_0,\widetilde{\bm v})=\bm 0.
\end{equation*}
These imply that
\begin{equation*}
\|(\bm v^*+\widetilde{\bm v})/2\|_{L^2(0,+\infty;L^2(\Omega)^m)}\leq \widetilde{N}(T,\bm y_0),
\;\;(\bm v^*+\widetilde{\bm v})/2=\bm 0\;\;
\mbox{for a.e.}\;\;t>T,
\end{equation*}
and
\begin{equation*}
\bm z(T;D\bm y_0,(\bm v^*+\widetilde{\bm v})/2)=\bm 0.
\end{equation*}
Thus, $(\bm v^*+\widetilde{\bm v})/2$ is also an optimal control to $(\widetilde{NP})_T^{\bm y_0}$.
Then
\begin{equation*}
\|(\bm v^*+\widetilde{\bm v})/2\|_{L^2(0,+\infty;L^2(\Omega)^m)}=\widetilde{N}(T,\bm y_0).
\end{equation*}
This, together with (\ref{Norm:1-7}) and parallelogram law, implies that  $\bm v^*=\widetilde{\bm v}$.\\

Step 4. We prove that $\bm v^*(t)\not=\bm 0$ for a.e. $t\in (0,T)$.

Since the operator $\Delta-\widetilde{A}^\top$, with its domain $\left(H^2(\Omega)\cap H_0^1(\Omega)\right)^{n-1}$,
  generates an analytic semigroup on $L^2(\Omega)^{n-1}$ (see, for instance, \cite{Pazy}), the desired result
 follows from (\ref{Norm:6-1}), (i) of Lemma~\ref{Norm:5}, (\ref{20180205-2}) and (\ref{Intro:11}).\\

In summary, we finish the proof of Theorem~\ref{Norm:6}.
\hspace{65mm} $\Box$

\subsection{Continuity and monotonicity of minimal norm function}

\begin{Proposition}\label{Norm:7} Let $\bm y_0\in L^2(\Omega)^n$. The following conclusions are true:
\begin{itemize}
  \item[(i)] Suppose that $(H_1)$ holds. If $\bm y_0\in S$, then $N(T,\bm y_0)=0$ for each $T>0$
  and $\lim_{T\rightarrow +\infty} N(T,\bm y_0)=M(\bm y_0)=0$;
  If $\bm y_0\not\in S$, then the function $N(\cdot,\bm y_0)$
is strictly monotonically decreasing and right continuous over $(0,+\infty)$. Moreover, it holds that
\begin{equation}\label{20180206-2}
\lim_{T\rightarrow +\infty} N(T,\bm y_0)=M(\bm y_0)\in [0,+\infty)
\end{equation}
and
\begin{equation}\label{Norm:7-1}
\lim_{T\rightarrow 0^+} N(T,\bm y_0)=+\infty.
\end{equation}
Here, $M(\bm y_0)$ is given by (\ref{Intro:9}).
\item[(ii)] Suppose that $(H_2)$ holds. If $\bm y_0=\bm 0$, then $N(T,\bm y_0)=0$ for each $T>0$
  and $\lim_{T\rightarrow +\infty} N(T,\bm y_0)=M(\bm y_0)=0$;
  If $\bm y_0\not=\bm 0$, then the function $N(\cdot,\bm y_0)$
is strictly monotonically decreasing and right continuous over $(0,+\infty)$. Moreover, it holds that
\begin{equation*}
\lim_{T\rightarrow +\infty} N(T,\bm y_0)=M(\bm y_0)\in [0,+\infty)
\;\;\mbox{and}\;\;\lim_{T\rightarrow 0^+} N(T,\bm y_0)=+\infty.
\end{equation*}
Here, $M(\bm y_0)$ is given by (\ref{Intro:9}).
\end{itemize}
\end{Proposition}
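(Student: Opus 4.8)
The plan is to work throughout with the transformed problem: since $(H_1)$ holds, part~(ii) of Theorem~\ref{20180225-1} gives $N(T,\bm y_0)=\widetilde N(T,\bm y_0)$ for every $T>0$, so I only need to study the minimal norm problem for~(\ref{wang1.6-1}) with terminal constraint $\bm z(T;D\bm y_0,\bm v)=\bm 0$, and the case $(H_2)$ is handled identically (subsection~3.2). If $\bm y_0\in S$ then $D\bm y_0=\bm 0$, $\bm v\equiv\bm 0$ is admissible for every $T$, and $N(T,\bm y_0)\equiv M(\bm y_0)=0$. From now on assume $\bm y_0\notin S$, i.e.\ $D\bm y_0\neq\bm 0$; then $N(T,\bm y_0)<+\infty$ for all $T>0$ because~(\ref{Intro:1}) is exactly synchronizable at each time (Theorem~\ref{Preli:2}), and $N(T,\bm y_0)>0$ since admissibility of $\bm 0$ would force $D\bm y_0=\bm 0$ by backward uniqueness for~(\ref{wang1.6-1}). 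For monotonicity: if $0<T_1<T_2$, the unique optimal control $\bm v_1^*$ of $(NP)_{T_1}^{\bm y_0}$ (Theorem~\ref{Norm:6}) vanishes on $(T_1,+\infty)$, so $\bm z(T_2;D\bm y_0,\bm v_1^*)=\bm 0$, $\bm v_1^*$ is admissible for $(NP)_{T_2}^{\bm y_0}$, and $N(T_2,\bm y_0)\le N(T_1,\bm y_0)$; if equality held, $\bm v_1^*$ would be the (unique) optimal control of $(NP)_{T_2}^{\bm y_0}$, which by Theorem~\ref{Norm:6} is nonzero a.e.\ on $(0,T_2)$, contradicting $\bm v_1^*\equiv\bm 0$ on the positive-measure set $(T_1,T_2)$. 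Hence $N(\cdot,\bm y_0)$ is strictly decreasing on $(0,+\infty)$.

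The right continuity at a fixed $T_0>0$ is the step I expect to be the main obstacle. Given $T_j\searrow T_0$, monotonicity yields $L:=\lim_jN(T_j,\bm y_0)\le N(T_0,\bm y_0)$, and I must prove $N(T_0,\bm y_0)\le L$. I would take optimal controls $\bm v_j$ of $(NP)_{T_j}^{\bm y_0}$, so that $\|\bm v_j\|_{L^2(0,+\infty;L^2(\Omega)^m)}=N(T_j,\bm y_0)\le N(T_0,\bm y_0)$, extract a weak limit $\bm v_j\rightharpoonup\bm v$ in $L^2(0,+\infty;L^2(\Omega)^m)$ with $\|\bm v\|\le L$, and check that $\bm v$ is admissible for $(NP)_{T_0}^{\bm y_0}$. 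That $\bm v=\bm 0$ on $(T_0,+\infty)$ is easy: for any $\varepsilon>0$, $\bm v_j=\bm 0$ on $(T_0+\varepsilon,+\infty)$ once $T_j<T_0+\varepsilon$, so $\bm v$ annihilates every $L^2$-function supported there. The terminal condition is delicate because $T_j$ varies; the device I would use is to fix any $T'>T_0$, note that $\bm z(T';D\bm y_0,\bm v_j)=\bm 0$ for $j$ large (the state stays at $\bm 0$ after $T_j$), pass to the weak limit using that $\bm w\mapsto\bm z(T';D\bm y_0,\bm w)$ is affine and bounded, hence weakly continuous, from $L^2(0,T';L^2(\Omega)^m)$ to $L^2(\Omega)^{n-1}$ to get $\bm z(T';D\bm y_0,\bm v)=\bm 0$, and then — since $\bm v=\bm 0$ on $(T_0,T')$ — invoke backward uniqueness for~(\ref{wang1.6-1}) (cf.\ \cite{Bardos-Tartar}) to conclude $\bm z(T_0;D\bm y_0,\bm v)=\bm 0$. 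Then $N(T_0,\bm y_0)\le\|\bm v\|\le L$, and since the subsequence was arbitrary, $N(T_j,\bm y_0)\to N(T_0,\bm y_0)$.

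For the endpoints, monotonicity makes $\lim_{T\to+\infty}N(T,\bm y_0)=\inf_{T>0}N(T,\bm y_0)$ exist in $[0,+\infty)$, which is exactly~(\ref{20180206-2}) with $M(\bm y_0)$ as in~(\ref{Intro:9}). For~(\ref{Norm:7-1}) I would argue by contradiction: if $\sup_{T>0}N(T,\bm y_0)=K<+\infty$, pick $T_j\searrow0$ and admissible controls $\bm v_j$ with $\|\bm v_j\|_{L^2(0,+\infty;L^2(\Omega)^m)}\le K$ and $\bm z(T_j;D\bm y_0,\bm v_j)=\bm 0$; by Duhamel's formula, $\bm z(T_j;D\bm y_0,\bm v_j)=e^{T_j(\Delta-\widetilde A)}D\bm y_0+\int_0^{T_j}e^{(T_j-s)(\Delta-\widetilde A)}\chi_\omega DB\,\bm v_j(s)\,\mathrm ds$, where the first term tends to $D\bm y_0$ in $L^2(\Omega)^{n-1}$ by strong continuity of the semigroup and the second has $L^2(\Omega)^{n-1}$-norm $\le C\sqrt{T_j}\,K\to0$, forcing $D\bm y_0=\bm 0$, a contradiction. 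As noted, the crux is the right-continuity argument, whose resolution hinges on shifting the terminal constraint to a time $T'>T_0$ (where all the approximating states already vanish) and then coming back to $T_0$ via backward uniqueness; the endpoint and monotonicity statements are comparatively routine once existence and uniqueness of optimal norm controls (Theorem~\ref{Norm:6}) are in hand.
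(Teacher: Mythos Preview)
Your proof is correct, and for the trivial case $\bm y_0\in S$, the strict monotonicity, and the limit~(\ref{20180206-2}) it coincides with the paper's argument. The two genuine differences lie in the right-continuity step and in the blow-up~(\ref{Norm:7-1}). For right continuity the paper does \emph{not} pass through a later time $T'>T_0$ and backward uniqueness; instead it uses that weak convergence of the controls $\bm v_i^*$ plus standard parabolic regularity and an Aubin--Lions/Arzel\`a--Ascoli argument yields \emph{strong} convergence $\bm z(\cdot;D\bm y_0,\bm v_i^*)\to\bm z(\cdot;D\bm y_0,\bm v_0)$ in $C([0,T_1];L^2(\Omega)^{n-1})$, after which a triangle-inequality splitting $\|\bm z(T_0;\cdot,\bm v_0)\|\le\|\bm z(T_0;\cdot,\bm v_0)-\bm z(T_i;\cdot,\bm v_0)\|+\|\bm z(T_i;\cdot,\bm v_0)-\bm z(T_i;\cdot,\bm v_i^*)\|$ gives $\bm z(T_0;D\bm y_0,\bm v_0)=\bm 0$ directly. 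Your route trades this compactness upgrade for the (cheaper) weak-to-weak continuity of the solution map at a \emph{fixed} time $T'$ together with backward uniqueness for the homogeneous system on $(T_0,T')$; this is slightly more elementary and would transfer more readily to settings where the strong compactness is unavailable. For~(\ref{Norm:7-1}) the paper repeats the same compactness scheme (letting $T_i\searrow 0$ and using strong $C$-convergence of the states to force $D\bm y_0=\bm 0$), whereas your Duhamel estimate $\|\int_0^{T_j}e^{(T_j-s)(\Delta-\widetilde A)}\chi_\omega DB\,\bm v_j\,\mathrm ds\|\le C\sqrt{T_j}\,K$ is more direct and quantitative, again avoiding any subsequence extraction. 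Both alternatives are valid; the paper's approach has the virtue of reusing a single compactness device for both steps, while yours isolates exactly the soft tool needed at each point.
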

\begin{proof} We only need to  prove the conclusion  (i), because of the reasons given in the last paragraph in subsection 3.2.

First, we suppose that $\bm y_0\in S$.
Then, by
(i) of Theorem~\ref{Norm:6},
we see that $N(T,\bm y_0)=0$ for each $T>0$. Consequently, we have that  $\lim_{T\rightarrow +\infty} N(T,\bm y_0)=M(\bm y_0)=0$.

We next suppose that  $\bm y_0\not\in S$. The rest of the proof  will be carried out by the following four steps.

\vskip 5pt

Step 1. We show that $N(\cdot,\bm y_0)$ is strictly monotonically decreasing.

Let $0<T_1<T_2<+\infty$.
By (i) of Theorem~\ref{Norm:6}, we can let  $\bm v_1^*$  be the unique optimal control to $(NP)_{T_1}^{\bm y_0}$. According to (ii) of Theorem~\ref{20180225-1}, $\bm v_1^*$ is
the optimal control to $(\widetilde{NP})_{T_1}^{\bm y_0}$. Then we have that
\begin{equation}\label{Norm:7-2}
 \|\bm v_1^*\|_{L^2(0,+\infty;L^2(\Omega)^m)}=\widetilde{N}(T_1,\bm y_0),\;\;\bm v_1^*(t)=\bm 0\;\;
 \mbox{for a.e.}\;\;t\in (T_1,+\infty)
\end{equation}
and
\begin{equation}\label{20180207-1}
\bm z(T_1;D\bm y_0,\bm v_1^*)=\bm 0.
\end{equation}
Since $T_2>T_1$, by  the second equality in (\ref{Norm:7-2}) and (\ref{20180207-1}),
we observe that
\begin{equation*}
\bm v_1^*(t)=\bm 0\;\;\mbox{for a.e.}\;\;t\in (T_2,+\infty)\;\;\mbox{and}\;\;\bm z(T_2;D\bm y_0,\bm v_1^*)=\bm 0.
\end{equation*}
These, along with the first equality in (\ref{Norm:7-2}), imply that $\bm v_1^*$ is an admissible control to
$(\widetilde{NP})_{T_2}^{\bm y_0}$ and
\begin{equation}\label{Norm:7-3}
\widetilde{N}(T_2,\bm y_0)\leq \|\bm v_1^*\|_{L^2(0,+\infty;L^2(\Omega)^m)}=\widetilde{N}(T_1,\bm y_0).
\end{equation}
We now claim  that $\widetilde{N}(T_2,\bm y_0)<\widetilde{N}(T_1,\bm y_0)$. By contradiction,
suppose that it were not true. Then by (\ref{Norm:7-3}), we would have that
$\widetilde{N}(T_1,\bm y_0)=\widetilde{N}(T_2,\bm y_0)$. This, together with
(\ref{Norm:7-2}) and the fact that $T_2>T_1$,
implies that $\bm v_1^*$ is an optimal control to $(\widetilde{NP})_{T_2}^{\bm y_0}$
and $\bm v_1^*(t)=\bm 0$
for a.e. $t\in (T_1,T_2)$. Using (ii) of Theorem~\ref{20180225-1}, we see that $\bm v_1^*$ is an optimal
control to $(NP)_{T_2}^{\bm y_0}$.
 However, by applying (i) of Theorem~\ref{Norm:6} to the problem $(NP)^{\bm y_0}_{T_2}$,
 we find that its optimal control $\bm v_1^*$ has the property: $\bm v_1^*(t)\not=\bm 0$
 for a.e. $t\in (0,T_2)$. Thus, we get a contradiction. Hence, we have that
 $\widetilde{N}(T_2,\bm y_0)<\widetilde{N}(T_1,\bm y_0)$, which
  along with (ii) of Theorem~\ref{20180225-1}, shows that $N(T_2,\bm y_0)<N(T_1,\bm y_0)$. So the function $N(\cdot,\bm y_0)$ is strictly monotonically decreasing.
 \\

Step 2. We prove that $N(\cdot,\bm y_0)$ is right continuous.

We arbitrarily fix a $T_0\in (0,+\infty)$. Let $\{T_i\}_{i\geq 1}$ be a strictly monotonically decreasing  sequence so that
$T_i\rightarrow T_0$. We aim to show that
\begin{equation}\label{Norm:7-4}
\lim_{i\to +\infty} N(T_i,\bm y_0)=N(T_0,\bm y_0).
\end{equation}
On one hand, by Step 1, we have that
\begin{equation}\label{Norm:7-5}
\lim_{i\to +\infty} N(T_i,\bm y_0)\leq N(T_0,\bm y_0).
\end{equation}
On the other hand, by the conclusion (i) of Theorem~\ref{Norm:6}, for each $i\geq 1$, $(NP)_{T_i}^{\bm y_0}$
has a   unique optimal control $\bm v_i^*$. According to (ii) of Theorem~\ref{20180225-1},
each $\bm v_i^*$ is the optimal control to the problem  $(\widetilde{NP})_{T_i}^{\bm y_0}$. Thus, we have that
\begin{equation}\label{Norm:7-6}
\|\bm v_i^*\|_{L^2(0,+\infty;L^2(\Omega)^m)}=N(T_i,\bm y_0),\;\;
\bm v_i^*(t)=\bm 0\;\;\mbox{for a.e.}\;\;t>T_i
\end{equation}
and
\begin{equation}\label{Norm:7-7}
\bm z(T_i;D\bm y_0,\bm v_i^*)=\bm 0.
\end{equation}
According to (\ref{Norm:7-5}), (\ref{Norm:7-6}),
 $L^2-$theory for parabolic system and Arzel\`{a}-Ascoli theorem, there exists
a control $\bm v_0\in L^2(0,+\infty;L^2(\Omega)^m)$
with $\bm v_0(t)=\bm 0$ for a.e. $t>T_0$ and a subsequence of $\{i\}_{i\geq 1}$, still
denoted in the same way, so that
\begin{equation}\label{Norm:7-8}
\bm v_i^*\rightarrow \bm v_0\;\;\mbox{weakly in}\;\;L^2(0,+\infty;L^2(\Omega)^m),\;\;
\|\bm v_0\|_{L^2(0,+\infty;L^2(\Omega)^m)}\leq \lim_{i\rightarrow +\infty} N(T_i,\bm y_0),
\end{equation}
and so that
\begin{equation}\label{Norm:7-9}
\bm z(\cdot;D\bm y_0,\bm v_i^*)\rightarrow \bm z(\cdot;D\bm y_0,\bm v_0)
\;\;\mbox{strongly in}\;\;C([0,T_1];L^2(\Omega)^{n-1}).
\end{equation}

It follows from (\ref{Norm:7-7}), (\ref{Norm:7-9})
and the continuity of $\bm z(\cdot;D\bm y_0,\bm v_0)$ that
\begin{eqnarray*}
\|\bm z(T_0;D\bm y_0,\bm v_0)\|_{L^2(\Omega)^{n-1}}&\leq&
\|\bm z(T_0;D\bm y_0,\bm v_0)-\bm z(T_i;D\bm y_0,\bm v_0)\|_{L^2(\Omega)^{n-1}}\\
&&+\|\bm z(T_i;D\bm y_0,\bm v_0)-\bm z(T_i;D\bm y_0,\bm v_i^*)\|_{L^2(\Omega)^{n-1}}\rightarrow 0.
\end{eqnarray*}
This implies that $\bm z(T_0;D\bm y_0,\bm v_0)=\bm 0$, i.e.,
$\bm v_0(\cdot)$
is an admissible control to the problem  $(\widetilde{NP})_{T_0}^{\bm y_0}$. So it is
an admissible control to
$(NP)_{T_0}^{\bm y_0}$ (see (ii) of Theorem~\ref{20180225-1}). Hence,
$N(T_0,\bm y_0)\leq \|\bm v_0\|_{L^2(0,+\infty;L^2(\Omega)^m)}$.
This, along with the second conclusion in (\ref{Norm:7-8}) and (\ref{Norm:7-5}), leads to
(\ref{Norm:7-4}).\\

Step 3. We show (\ref{20180206-2}).

Since $\bm y_0\not\in S$, $N(T,\bm y_0)>0$ for each $T>0$ (see (i) of Theorem~\ref{Norm:6}).
Then (\ref{20180206-2}) follows from Step 1 at once.\\

Step 4. We prove (\ref{Norm:7-1}).

By contradiction,
suppose that (\ref{Norm:7-1}) were not true. Then
 we would have that
\begin{equation*}
\lim_{T\rightarrow 0^+} N(T,\bm y_0)=\widetilde M \;\;\mbox{for some}\;\; \widetilde M\in (0,+\infty).
\end{equation*}
Let $\{T_i\}_{i\geq 1}$ be a strictly monotonically decreasing sequence satisfying $T_i\rightarrow 0$.
By (i) of Theorem~\ref{Norm:6}, for each $i\geq 1$, $(NP)_{T_i}^{\bm y_0}$ has a  unique optimal control  $\bm v_i^*$. Then by (ii) of Theorem~\ref{20180225-1},
each $\bm v_i^*$ is the optimal control to the problem  $(\widetilde{NP})_{T_i}^{\bm y_0}$.
Thus, we can use the optimality of $\bm v_i^*$ and the  conclusion of  Step 1 to obtain that
for all $i\geq 1$,
\begin{equation}\label{Norm:7-11}
\|\bm v_i^*\|_{L^2(0,+\infty;L^2(\Omega)^m)}=N(T_i,\bm y_0)\leq \widetilde M,\;\;
\bm v_i^*(t)=\bm 0\;\;\mbox{for a.e.}\;\;t>T_i,
\end{equation}
and
\begin{equation}\label{Norm:7-12}
\bm z(T_i;D\bm y_0,\bm v_i^*)=\bm 0.
\end{equation}
According to (\ref{Norm:7-11}), we can use  the same arguments as those used in Step 2 to see that
there exists a control $\bm v_0\in L^2(0,+\infty;L^2(\Omega)^m)$
and a subsequence of $\{i\}_{i\geq 1}$, still denoted in the same way, so that
\begin{equation*}
\bm v_i^*\rightarrow \bm v_0\;\;\mbox{weakly in}\;\;L^2(0,+\infty;L^2(\Omega)^m)
\end{equation*}
and so that
\begin{equation}\label{Norm:7-13}
\bm z(\cdot;D\bm y_0,\bm v_i^*)\rightarrow \bm z(\cdot;D\bm y_0,\bm v_0)\;\;
\mbox{strongly in}\;\;C([0,T_1];L^2(\Omega)^n).
\end{equation}
Meanwhile, it is clear that
\begin{eqnarray*}
&&\|\bm z(T_i;D\bm y_0,\bm v_0)\|_{L^2(\Omega)^{n-1}}\\
&\leq& \|\bm z(T_i;D\bm y_0,\bm v_0)-\bm z(T_i;D\bm y_0,\bm v_i^*)\|_{L^2(\Omega)^{n-1}}
+\|\bm z(T_i;D\bm y_0,\bm v_i^*)\|_{L^2(\Omega)^{n-1}}.
\end{eqnarray*}
Then by (\ref{Norm:7-12}) and (\ref{Norm:7-13}), we can pass to the limit for $i\rightarrow +\infty$ in the above inequality to  obtain that $D\bm y_0=\bm 0$, i.e., $\bm y_0\in S$.
This leads to a contradiction. Hence,  (\ref{Norm:7-1}) is true. \\

Thus, we finish the proof of Proposition~\ref{Norm:7}.
\end{proof}

\section{Proof of main results}
We only need to  prove Theorem~\ref{Intro:17-1}, because of the reasons given in the last paragraph in subsection 3.2. The proof of Theorem~\ref{Intro:17-1} will be organized in the
next three subsections.
\subsection{Existence and uniqueness}
\begin{Proposition}\label{Time:1}
Suppose that $(H_1)$ holds. Let  $\bm y_0\in L^2(\Omega)^n$ and let $M>0$.
The following conclusions are true:
\begin{itemize}
\item[$(i)$] If $\bm y_0\in S$, then $(TP)_M^{\bm y_0}$ has the
unique optimal control $\bm 0$ (while $0$ is the optimal time).
\item[$(ii)$] If $\bm y_0\not\in S$ and $M\leq M(\bm y_0)$, then $(TP)_M^{\bm y_0}$ has no optimal control.
\item[$(iii)$] If $\bm y_0\not\in S$ and $M> M(\bm y_0)$, then $T(M,\bm y_0)>0$ and $(TP)_M^{\bm y_0}$ has
a unique optimal control $\bm u^*$. Moreover,  $\|\bm u^*\|_{L^2(0,+\infty;L^2(\Omega)^m)}=M$.
\end{itemize}
\end{Proposition}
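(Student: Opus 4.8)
The plan is to reduce everything to the transformed problem $(\widetilde{TP})_M^{\bm y_0}$, whose target is the origin of $L^2(\Omega)^{n-1}$, via (i) of Theorem~\ref{20180225-1}, and then exploit the continuity/monotonicity properties of the minimal norm function $N(\cdot,\bm y_0)$ established in Proposition~\ref{Norm:7}. The key bridge is that $\bm u\in\mathcal U_M$ is admissible for $(\widetilde{TP})_M^{\bm y_0}$ with $\bm z(T;D\bm y_0,\bm u)=\bm 0$ exactly when $\widetilde N(T,\bm y_0)=N(T,\bm y_0)\le M$; hence $T(M,\bm y_0)=\inf\{T\ge 0: N(T,\bm y_0)\le M\}$. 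Since $N(\cdot,\bm y_0)$ is strictly decreasing, right continuous, tends to $M(\bm y_0)$ as $T\to+\infty$ and to $+\infty$ as $T\to 0^+$ (when $\bm y_0\notin S$), this infimum characterization is the engine driving all three cases.

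For $(i)$: if $\bm y_0\in S$ then $D\bm y_0=\bm 0$, so $\bm z(0;D\bm y_0,\bm 0)=\bm 0$ and $\bm 0\in\mathcal U_M$ is admissible with $T(M,\bm y_0)=0$; uniqueness follows because any other admissible control would also have to be admissible for $(\widetilde{NP})_0^{\bm y_0}$, whose unique optimal control is $\bm 0$ (Theorem~\ref{Norm:6}(i)). For $(ii)$: suppose $\bm y_0\notin S$ and $M\le M(\bm y_0)$. Since $N(T,\bm y_0)>M(\bm y_0)\ge M$ for every finite $T>0$ (strict monotone decrease towards the infimum $M(\bm y_0)$), no finite $T$ admits an admissible control, so $T(M,\bm y_0)=+\infty$ and there is no optimal control. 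Here I should be careful to note that $N(T,\bm y_0)>M(\bm y_0)$ strictly, which is exactly what strict monotonicity plus $\lim_{T\to\infty}N(T,\bm y_0)=M(\bm y_0)$ gives.

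For $(iii)$: suppose $\bm y_0\notin S$ and $M>M(\bm y_0)$. By $\lim_{T\to0^+}N(T,\bm y_0)=+\infty$ and $\lim_{T\to+\infty}N(T,\bm y_0)=M(\bm y_0)<M$, together with right continuity and strict monotonicity, there is a well-defined finite $T^*=T(M,\bm y_0)>0$; moreover I claim $N(T^*,\bm y_0)=M$. The inequality $N(T^*,\bm y_0)\le M$ follows from right continuity at $T^*$ applied to a decreasing sequence $T_i\downarrow T^*$ with $N(T_i,\bm y_0)\le M$ (such $T_i$ exist since $T^*$ is the infimum of the set $\{T:N(T,\bm y_0)\le M\}$ and this set is a half-line by monotonicity); the reverse inequality $N(T^*,\bm y_0)\ge M$ follows because $N(T,\bm y_0)>M$ for all $T<T^*$ and strict monotonicity forbids a jump down at $T^*$ from the left — more precisely, if $N(T^*,\bm y_0)<M$ then by right continuity $N(T,\bm y_0)<M$ for some $T<T^*$, contradicting the definition of the infimum. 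I then take $\bm u^*$ to be the (unique, by Theorem~\ref{Norm:6}(i)) optimal control to $(NP)_{T^*}^{\bm y_0}$, extended by $\bm 0$ past $T^*$; since $\|\bm u^*\|_{L^2(0,+\infty;L^2(\Omega)^m)}=N(T^*,\bm y_0)=M\le M$ it lies in $\mathcal U_M$ and drives $\bm z(T^*;D\bm y_0,\bm u^*)=\bm 0$, hence is admissible for $(\widetilde{TP})_M^{\bm y_0}$ at time $T^*$, so it is an optimal control and $\|\bm u^*\|=M$. For uniqueness, any optimal control $\bm u$ has $\bm u=\bm 0$ on $(T^*,+\infty)$ and $\bm z(T^*;D\bm y_0,\bm u)=\bm 0$ with $\|\bm u\|_{L^2}\le M=N(T^*,\bm y_0)$, so it is admissible for $(NP)_{T^*}^{\bm y_0}$ with norm $\le N(T^*,\bm y_0)$, forcing it to be the unique minimal-norm control; finally transfer back to $(TP)_M^{\bm y_0}$ via Theorem~\ref{20180225-1}(i). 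The main obstacle, and the step deserving the most care, is pinning down $N(T^*,\bm y_0)=M$ exactly — i.e., ruling out $N(T^*,\bm y_0)<M$ — since that is what makes the minimal-norm control at $T^*$ simultaneously admissible for the time-optimal problem and of norm precisely $M$; everything else is bookkeeping through the equivalences of Theorems~\ref{20180225-1} and \ref{Norm:6}.
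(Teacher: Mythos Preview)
Your overall route differs from the paper's, and there is a genuine gap in part (iii), precisely at the step you flagged as ``the main obstacle''.

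Your argument for $N(T^*,\bm y_0)\ge M$ reads: ``if $N(T^*,\bm y_0)<M$ then by right continuity $N(T,\bm y_0)<M$ for some $T<T^*$.'' But right continuity at $T^*$ controls only $T>T^*$; to obtain information for $T<T^*$ you would need \emph{left} continuity of $N(\cdot,\bm y_0)$, and Proposition~\ref{Norm:7} supplies only right continuity. Strict monotonicity does not exclude a downward jump from the left either. Without the exact identity $N(T^*,\bm y_0)=M$, both the conclusion $\|\bm u^*\|_{L^2}=M$ and your uniqueness argument (which requires $\|\bm u\|_{L^2}\le M=N(T^*,\bm y_0)$ in order to force any optimal $\bm u$ to coincide with the unique minimal-norm control) collapse. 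Parts (i) and (ii), and existence together with $T(M,\bm y_0)>0$ in (iii), are fine as you wrote them.

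The paper avoids this difficulty by a different mechanism. Existence is obtained by a direct weak-compactness argument on a minimizing sequence of admissible controls. The identity $\|\bm u^*\|_{L^2}=M$ is then proved by a time-shift perturbation: assuming $\|\bm u^*\|_{L^2}<M$, one uses the continuity of $t\mapsto\bm z(t;D\bm y_0,\bm u^*)$ at $t=0$ together with the null-controllability estimate from Proposition~\ref{Preli:5} to build a control in $\mathcal U_M$ that reaches the target at time $T(M,\bm y_0)-\delta$, contradicting optimality. In the paper's logical order, the equality $N(T(M,\bm y_0),\bm y_0)=M$ that you are trying to use is exactly Proposition~\ref{Equivalence:1}, which is proved \emph{after} Proposition~\ref{Time:1} and relies on it through Corollary~\ref{Time:4}; so your shortcut, as written, would be circular. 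You could salvage your approach by first proving left continuity of $N(\cdot,\bm y_0)$ independently, but the natural proof of that fact is essentially the same time-shift perturbation argument the paper uses in its Step~2.
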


\begin{proof}  First of all, we recall that $\bm y_0\in S\Leftrightarrow  D\bm y_0=0$ (which follows  by
the definition of $S$ and (\ref{wang1.3}) at once). Now we  show (i).
Suppose that $\bm y_0\in S$. Then we have that  $D\bm y_0=\bm 0$. From this, one can easily check that $\bm 0$ is the unique optimal control
and $0$ is the optimal time to the problem  $(\widetilde{TP})_M^{\bm y_0}$.
Then by (i) of Theorem~\ref{20180225-1},  $\bm 0$
and $0$ are the unique optimal control and the optimal time to
$(TP)_M^{\bm y_0}$.\\

We next show the conclusion (ii). By contradiction, suppose that it were not true.
Then there would be $\bm y_0\not\in S$ and $M\leq M(\bm y_0)$ so that
$(TP)_M^{\bm y_0}$ has an optimal control $\bm{\widetilde u}$.
Thus, according to (i) of Theorem~\ref{20180225-1},
$\bm{\widetilde u}$ is an optimal control to the problem
$(\widetilde{TP})_M^{\bm y_0}$ and $\widetilde{T}(M,\bm y_0)=T(M,\bm y_0)$. These imply that
\begin{equation}\label{Time:1-a}
\|\bm{\widetilde u}\|_{L^2(0,+\infty;L^2(\Omega)^m)}\leq M,\;\;\;
\bm{\widetilde u}(t)=\bm 0\;\;\mbox{for a.e.}\;\;t>\widetilde{T}(M,\bm y_0)=T(M,\bm y_0),
\end{equation}
and
\begin{equation}\label{Time:1-b}
\bm z(T(M,\bm y_0);D\bm y_0,\bm{\widetilde u})=\bm z(\widetilde{T}(M,\bm y_0);D\bm y_0,\bm{\widetilde u})=\bm 0.
\end{equation}
Since $\bm y_0\not\in S$, we have that  $D\bm y_0\not=\bm 0$. Thus, it follows by (\ref{Time:1-b}) and (\ref{Time:1-a})
 that
$T(M,\bm y_0)>0$ and   $\bm{\widetilde u}$ is an admissible control to
the problem  $(\widetilde{NP})_{T(M,\bm y_0)}^{\bm y_0}$.
Then by making use of (ii) of Theorem~\ref{20180225-1} again, we see that  $\bm{\widetilde u}$ is an admissible control to
$(NP)_{T(M,\bm y_0)}^{\bm y_0}$, which indicates that
$$
N(T(M,\bm y_0),\bm y_0)\leq \|\bm {\widetilde u}\|_{L^2(0,+\infty;L^2(\Omega)^m)}\leq M.
$$
This, along with the strict  monotonicity of $N(\cdot,\bm y_0)$
(see (i) of Proposition~\ref{Norm:7})), yields that
\begin{equation*}
M\geq N(T(M,\bm y_0),\bm y_0)>M(\bm y_0),
\end{equation*}
which leads to a contradiction. Hence, the conclusion (ii) in this proposition is true. \\

Finally, we  prove the conclusion (iii). Suppose that  $\bm y_0\not\in S$ and $M> M(\bm y_0)$. The rest of the proof is organized by  three steps.

Step 1. We show that $(TP)_M^{\bm y_0}$ has at least one optimal control and $T(M,\bm y_0)>0$.

According to (i) of Proposition~\ref{Norm:7},
there exists $\widetilde{T}>0$ so that $M(\bm y_0)<N(\widetilde{T},\bm y_0)<M$.
By (i) of Theorem~\ref{Norm:6}, the problem $(NP)_{\widetilde T}^{\bm y_0}$ has a unique optimal control
$\widetilde{\bm u}$.
Then according to (ii) of Theorem~\ref{20180225-1},  $\widetilde{\bm u}$ is the unique optimal control to $(\widetilde{NP})_{\widetilde T}^{\bm y_0}$. Thus, by the optimality of $\widetilde{\bm u}$, we have that
\begin{equation*}
\|\widetilde{\bm u}\|_{L^2(0,+\infty;L^2(\Omega)^m)}=\widetilde{N}(\widetilde{T},\bm y_0)=N(\widetilde{T},\bm y_0)<M,
\;\;\widetilde{\bm u}(t)=\bm 0\;\;\mbox{for a.e.}\;\;t>\widetilde{T}
\end{equation*}
and
\begin{equation*}
\bm z(\widetilde{T};D\bm y_0,\widetilde{\bm u})=\bm 0.
\end{equation*}
These yield that $\widetilde{\bm u}$ is an admissible control for the problem $(\widetilde{TP})_M^{\bm y_0}$.
Hence, there exists $\{\bm u_i\}_{i\geq 1}\subseteq L^2(0,+\infty;L^2(\Omega)^m)$ and
a strictly monotonically decreasing sequence $\{T_i\}_{i\geq 1}$
 so that
\begin{equation}\label{Time:1-1}
T_i\rightarrow \widetilde{T}(M,\bm y_0),
\end{equation}
\begin{equation}\label{Time:1-2}
\|\bm u_i\|_{L^2(0,+\infty;L^2(\Omega)^m)}\leq M,\;\;\bm u_i(t)=\bm 0\;\;\mbox{for a.e.}\;\;t>T_i,
\end{equation}
and so that
\begin{equation}\label{Time:1-3}
\bm z(T_i;D\bm y_0,\bm u_i)=\bm 0.
\end{equation}

By (\ref{Time:1-1})-(\ref{Time:1-3}), we can use the similar arguments to those
used in  Step 2 of the proof of  Proposition~\ref{Norm:7} to obtain  $\bm {\widetilde u^*}\in L^2(0,+\infty;L^2(\Omega)^m)$ so that
\begin{equation}\label{Time:1-4}
\|\bm {\widetilde u^*}\|_{L^2(0,+\infty;L^2(\Omega)^m)}\leq M,\;\;
\bm {\widetilde u^*}(t)=\bm 0\;\;\mbox{for a.e.}\;\;t>\widetilde{T}(M,\bm y_0),
\end{equation}
and so that
\begin{equation}\label{Time:1-5}
\bm z(\widetilde{T}(M,\bm y_0);D\bm y_0,\bm {\widetilde u^*})=\bm 0.
\end{equation}
Since $\bm y_0\not\in S$, i.e., $D\bm y_0\not=\bm 0$, it follows from (\ref{Time:1-5}) and (\ref{Time:1-4})
 that $\widetilde{T}(M,\bm y_0)>0$ and
$\bm {\widetilde u^*}$ is an optimal control to $(\widetilde{TP})_M^{\bm y_0}$.
These, along with (i) of Theorem~\ref{20180225-1}, show that $\bm {\widetilde u^*}$ is an optimal control to $(TP)_M^{\bm y_0}$ and
$T(M,\bm y_0)=\widetilde{T}(M,\bm y_0)>0$.\\

Step 2. We prove that
any optimal control $\bm u^*$ to $(TP)_M^{\bm y_0}$ satisfies that
$\|\bm u^*\|_{L^2(0,+\infty;L^2(\Omega)^m)}\\=M$.

By contradiction, suppose that it were not true. Then  $(TP)_M^{\bm y_0}$ would have an optimal control $\bm u^*$
so that
\begin{equation}\label{Time:2-1}
\|\bm u^*\|_{L^2(0,+\infty;L^2(\Omega)^m)}\triangleq \widetilde{M}<M.
\end{equation}
According to (i) of Theorem~\ref{20180225-1}, $\bm u^*$ is an optimal control to $(\widetilde{TP})_M^{\bm y_0}$ and $\widetilde{T}(M,\bm y_0)
=T(M,\bm y_0)$. Then by the optimality of $\bm u^*$, we get that
\begin{equation}\label{Time:2-3}
\bm z(\widetilde{T}(M,\bm y_0);D\bm y_0,\bm u^*)=\bm 0\;\;\mbox{and}\;\;\bm u^*(t)=\bm 0\;\;\mbox{for a.e.}\;\;
t>\widetilde{T}(M,\bm y_0).
\end{equation}
We denote $\bm z^*(\cdot)\triangleq \bm z(\cdot;D\bm y_0,\bm u^*)$.
Since $\widetilde{T}(M,\bm y_0)=T(M,\bm y_0)>0$ (see Step 1) and
$\bm z^*\in C([0,\widetilde{T}(M,\bm y_0)];L^2(\Omega)^{n-1})$, we see that for any $\varepsilon>0$
(it will be precised later), there is a constant
$\delta\triangleq \delta(\varepsilon)\in (0,\widetilde{T}(M,\bm y_0)/2)$ so that
\begin{equation}\label{Time:2-4}
\|D\bm y_0-\bm z^*(\delta)\|_{L^2(\Omega)^{n-1}}<\varepsilon.
\end{equation}
According to (\ref{Preli:5-1}) in Proposition~\ref{Preli:5},
and the equivalence between the observability  and the null controllability (see, for instance,  \cite{Coron}),
 there is a control
$\bm v_0\in L^2(0,+\infty;L^2(\Omega)^m)$ so that
\begin{equation}\label{20180206-3}
\bm v_0(t)=\bm 0\;\;\mbox{for a.e.}\;\;t>\widetilde{T}(M,\bm y_0)/2;
\end{equation}
\begin{equation}\label{Time:2-5}
\|\bm v_0\|_{L^2(0,+\infty;L^2(\Omega)^m)}\leq C(\widetilde{T}(M,\bm y_0))
\|D\bm y_0-\bm z^*(\delta)\|_{L^2(\Omega)^{n-1}};
\end{equation}
and so that $\bm w\in C([0,+\infty);L^2(\Omega)^{n-1})$ satisfies
\begin{equation}\label{Time:2-6}
\left\{
\begin{array}{lll}
\bm w_t-\Delta \bm w+\widetilde{A}\bm w=\chi_\omega D B\bm v_0&\mbox{in}&\Omega\times (0,+\infty),\\
\bm w=\bm 0&\mbox{on}&\partial \Omega\times (0,+\infty),\\
\bm w(0)=D\bm y_0-\bm z^*(\delta)&\mbox{in}&\Omega
\end{array}
\right.
\end{equation}
and
\begin{equation}\label{Time:2-7}
\bm w(t)=\bm 0\;\;\mbox{for all}\;\;t\geq \widetilde{T}(M,\bm y_0)/2.
\end{equation}

Denote $\bm z^*_\delta(\cdot)\triangleq \bm z^*(\cdot+\delta)$. On one hand, by
 (\ref{Time:2-6}), the first conclusion in (\ref{Time:2-3}) and
(\ref{Time:2-7}), we have that
\begin{equation}\label{Time:2-10}
\left\{
\begin{array}{lll}
(\bm z_\delta^*+\bm w)_t-\Delta (\bm z_\delta^*+\bm w)
+\widetilde{A}(\bm z_\delta^*+\bm w)&&\\
\;\;\;\;\;\;\;\;\;\;\;\;\;\;\;\;\;\;\;\;\;\;\;\;\;\;\;\;\;\;\;\;\;\;\;\;\;\;\;\;\;\;
=\chi_\omega D B(\bm u^*(t+\delta)+\bm v_0)&\mbox{in}&\Omega\times (0,+\infty),\\
\bm z_\delta^*+\bm w=\bm 0&\mbox{on}&\partial \Omega\times (0,+\infty),\\
(\bm z_\delta^*+\bm w)(0)=D\bm y_0&\mbox{in}&\Omega
\end{array}
\right.
\end{equation}
and
\begin{equation}\label{Time:2-11}
(\bm z_\delta^*+\bm w)(\widetilde{T}(M,\bm y_0)-\delta)=\bm 0.
\end{equation}
On the other hand, we choose $\varepsilon=\frac{M-\widetilde{M}}{2(C(T(M,\bm y_0))+1)}$.
It follows from (\ref{Time:2-1}), (\ref{Time:2-5}) and (\ref{Time:2-4}) that
\begin{equation}\label{20180226-1}
\|\bm u^*(\cdot+\delta)+\bm v_0(\cdot)\|_{L^2(0,+\infty;L^2(\Omega)^m)}\leq M.
\end{equation}
Noting that $\bm u^*(t+\delta)+\bm v_0(t)=\bm 0$ for a.e. $t>\widetilde{T}(M,\bm y_0)-\delta$
(which follows by the second conclusion of (\ref{Time:2-3}) and (\ref{20180206-3})),
by (\ref{Time:2-10})-(\ref{20180226-1}) and the optimality of $\widetilde{T}(M,\bm y_0)$, we obtain
that $\widetilde{T}(M,\bm y_0)-\delta\geq \widetilde{T}(M,\bm y_0)$,
which leads to a contradiction.\\

Step 3. We show that $(TP)_M^{\bm y_0}$ has a unique optimal control.

The result follows from Steps 1-2 and similar arguments as those in Step 3 of the proof of Theorem~\ref{Norm:6}.\\

Hence, according to Steps 1-3, the conclusions in (iii) are true.\\

In summary, we finish the proof of this proposition.
\end{proof}

\begin{Corollary}\label{Time:4}
Suppose that $(H_1)$ holds and $\bm y_0\not\in S$.
The function $T(\cdot,\bm y_0)$ is strictly monotonically decreasing over $(M(\bm y_0),+\infty)$.
\end{Corollary}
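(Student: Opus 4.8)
The plan is to reduce the monotonicity of $T(\cdot,\bm y_0)$ to the strict monotonicity of the minimal norm function $N(\cdot,\bm y_0)$ established in Proposition~\ref{Norm:7}. The bridge between the two is the identity
\begin{equation*}
N(T(M,\bm y_0),\bm y_0)=M\quad\mbox{for all}\;\;M>M(\bm y_0).
\end{equation*}
First I would note that, since $\bm y_0\not\in S$ and $M>M(\bm y_0)$, Proposition~\ref{Time:1}$(iii)$ guarantees that $T^*\triangleq T(M,\bm y_0)\in(0,+\infty)$ and that $(TP)_M^{\bm y_0}$ has a (unique) optimal control $\bm u^*$ with $\|\bm u^*\|_{L^2(0,+\infty;L^2(\Omega)^m)}=M$; in particular $T^*$ lies in the domain $(0,+\infty)$ of $N(\cdot,\bm y_0)$.

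To prove ``$\leq$'' in the identity: $\bm u^*$ vanishes on $[T^*,+\infty)$ and $\bm y(\cdot;\bm y_0,\bm u^*)\in S$ over $[T^*,+\infty)$, so $\bm u^*$ is an admissible control for $(NP)_{T^*}^{\bm y_0}$, whence $N(T^*,\bm y_0)\leq\|\bm u^*\|_{L^2(0,+\infty;L^2(\Omega)^m)}=M$. To prove ``$\geq$'': argue by contradiction, assuming $N(T^*,\bm y_0)<M$. Let $\bm v^*$ be the optimal control of $(NP)_{T^*}^{\bm y_0}$ given by Theorem~\ref{Norm:6}; then $\|\bm v^*\|_{L^2(0,+\infty;L^2(\Omega)^m)}=N(T^*,\bm y_0)<M$, so $\bm v^*\in\mathcal{U}_M$, and $\bm v^*$ vanishes on $[T^*,+\infty)$ with $\bm y(\cdot;\bm y_0,\bm v^*)\in S$ over $[T^*,+\infty)$. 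Hence $\bm v^*$ is itself an optimal control of $(TP)_M^{\bm y_0}$ whose norm is strictly less than $M$, contradicting the last assertion of Proposition~\ref{Time:1}$(iii)$. This proves the identity.

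Finally, fix $M(\bm y_0)<M_1<M_2$ and set $T_i\triangleq T(M_i,\bm y_0)\in(0,+\infty)$ for $i=1,2$. By the identity, $N(T_1,\bm y_0)=M_1<M_2=N(T_2,\bm y_0)$. If we had $T_1\leq T_2$, then the strict monotone decrease of $N(\cdot,\bm y_0)$ on $(0,+\infty)$ (Proposition~\ref{Norm:7}$(i)$, valid since $\bm y_0\not\in S$) would force $N(T_1,\bm y_0)\geq N(T_2,\bm y_0)$, a contradiction; hence $T_1>T_2$, that is, $T(M_1,\bm y_0)>T(M_2,\bm y_0)$. This is precisely the claimed strict monotone decrease of $T(\cdot,\bm y_0)$ on $(M(\bm y_0),+\infty)$.

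The only genuinely non-routine point is the inequality $N(T^*,\bm y_0)\geq M$: it rests on the fact that \emph{every} optimal control of $(TP)_M^{\bm y_0}$ has norm exactly $M$, which is why Proposition~\ref{Time:1}$(iii)$ (and not merely an existence statement) is needed here. Everything else is bookkeeping with the definitions of admissible controls for $(TP)_M^{\bm y_0}$ and $(NP)_T^{\bm y_0}$, together with the already-proved strict monotonicity of $N(\cdot,\bm y_0)$.
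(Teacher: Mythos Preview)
Your proof is correct. Its core ingredient---that any optimal control of $(TP)_M^{\bm y_0}$ has norm exactly $M$ (Proposition~\ref{Time:1}$(iii)$)---is the same as the paper's, but the organization differs. The paper argues directly: the optimal control $\bm u_1^*$ of $(TP)_{M_1}^{\bm y_0}$ is admissible for $(TP)_{M_2}^{\bm y_0}$, giving $T(M_2,\bm y_0)\leq T(M_1,\bm y_0)$; equality would make $\bm u_1^*$ optimal for $(TP)_{M_2}^{\bm y_0}$, forcing $\|\bm u_1^*\|=M_2$, a contradiction. You instead pass through the identity $N(T(M,\bm y_0),\bm y_0)=M$ and then invoke the strict monotonicity of $N(\cdot,\bm y_0)$. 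This identity is in fact half of Proposition~\ref{Equivalence:1}, which the paper proves \emph{after} Corollary~\ref{Time:4} and whose proof there actually \emph{uses} Corollary~\ref{Time:4}. Your argument for the identity is self-contained (it relies only on Proposition~\ref{Time:1}$(iii)$, Theorem~\ref{Norm:6}, and Proposition~\ref{Norm:7}), so there is no circularity; in effect you have given an alternative, independent proof of that part of Proposition~\ref{Equivalence:1} as a byproduct. The paper's route is shorter for the corollary itself, while yours makes the inverse-function relationship between $T(\cdot,\bm y_0)$ and $N(\cdot,\bm y_0)$ explicit earlier.
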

\begin{proof} We arbitrarily fix $M_1, M_2\in (M(\bm y_0),+\infty)$ so that $M_2>M_1$.
By (iii) of Proposition~\ref{Time:1}, we have that
$T(M_1,\bm y_0)>0$ and $(TP)_{M_1}^{\bm y_0}$ has a unique optimal control $\bm u_1^*$  so that
\begin{equation}\label{Time:4-1}
\|\bm u_1^*\|_{L^2(0,+\infty;L^2(\Omega)^m)}=M_1<M_2.
\end{equation}
Thus, according to (i) of
Theorem~\ref{20180225-1},  $\bm u_1^*$ is the optimal control to the problem $(\widetilde{TP})_{M_1}^{\bm y_0}$. Then by the optimality of $\bm u_1^*$, we obtain that
\begin{equation}\label{20180206-4}
\bm z(\widetilde{T}(M_1,\bm y_0);D\bm y_0,\bm u_1^*)=\bm 0\;\;\mbox{and}\;\;\bm u_1^*(t)=\bm 0\;\;
\mbox{for a.e.}\;\;t>\widetilde{T}(M_1,\bm y_0).
\end{equation}
It follows from (\ref{Time:4-1}) and (\ref{20180206-4}) that $\bm u_1^*$ is
an admissible control to $(\widetilde{TP})_{M_2}^{\bm y_0}$.
Hence, $\widetilde{T}(M_2,\bm y_0)\leq \widetilde{T}(M_1,\bm y_0)$.

If $\widetilde{T}(M_2,\bm y_0)=\widetilde{T}(M_1,\bm y_0)$,
then by (\ref{Time:4-1}) and (\ref{20180206-4}), we would have  that
$\bm u_1^*$ is an optimal control to $(\widetilde{TP})_{M_2}^{\bm y_0}$.
Thus, according to (i) of Theorem~\ref{20180225-1},
$\bm u_1^*$ is an optimal control to $(TP)_{M_2}^{\bm y_0}$.
This, along with (iii) of Proposition~\ref{Time:1}, yields that $\|\bm u_1^*\|_{L^2(0,+\infty;L^2(\Omega)^m)}=M_2$,
which contradicts  (\ref{Time:4-1}). Thus, $\widetilde{T}(M_2,\bm y_0)<\widetilde{T}(M_1,\bm y_0)$.
Using (i) of Theorem~\ref{20180225-1} again, we obtain that $T(M_2,\bm y_0)<T(M_1,\bm y_0)$.

This completes the proof.
\end{proof}

\subsection{Equivalence between minimal time and minimal norm control problems}
\begin{Proposition}\label{Equivalence:1} Suppose that
$(H_1)$ holds and $\bm y_0\not\in S$.
For each $T>0$, $T(N(T,\bm y_0),\bm y_0)=T$ and the optimal control to $(NP)_T^{\bm y_0}$ is the optimal control
to $(TP)_{N(T,\bm y_0)}^{\bm y_0}$. Conversely, for each $M>M(\bm y_0)$, $N(T(M,\bm y_0),\bm y_0)=M$
and the optimal control to $(TP)_M^{\bm y_0}$ is the optimal control to $(NP)_{T(M,\bm y_0)}^{\bm y_0}$.
\end{Proposition}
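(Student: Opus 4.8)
The plan is to run the classical minimal-time/minimal-norm duality argument, using as black boxes the monotonicity and boundary behaviour of $N(\cdot,\bm y_0)$ (Proposition~\ref{Norm:7}), the existence, uniqueness and a.e.-nonvanishing of the minimal-norm control (Theorem~\ref{Norm:6}), the existence and uniqueness of the minimal-time control together with the identity $\|\bm u^*\|=M$ (Proposition~\ref{Time:1}), and the strict monotonicity of $T(\cdot,\bm y_0)$ on $(M(\bm y_0),+\infty)$ (Corollary~\ref{Time:4}). The engine throughout is mutual admissibility: an optimal control for one problem, extended by $\bm 0$ or read on a larger time interval in the obvious way, is admissible for the other.

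First I would fix $T>0$, set $M_0:=N(T,\bm y_0)$, and note that $M_0>M(\bm y_0)$, since $\bm y_0\notin S$ forces $N(\cdot,\bm y_0)$ to be strictly decreasing with limit $M(\bm y_0)$ at $+\infty$; hence Proposition~\ref{Time:1}(iii) applies to $(TP)_{M_0}^{\bm y_0}$. Let $\bm v^*$ be the optimal control of $(NP)_T^{\bm y_0}$. Since $\|\bm v^*\|_{L^2(0,+\infty;L^2(\Omega)^m)}=M_0$, $\bm v^*=\bm 0$ on $[T,+\infty)$ and $\bm y(\cdot;\bm y_0,\bm v^*)\in S$ on $[T,+\infty)$, the control $\bm v^*$ is admissible for $(TP)_{M_0}^{\bm y_0}$, so $T(M_0,\bm y_0)\le T$. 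For the reverse inequality I would argue by contradiction: if $T(M_0,\bm y_0)<T$, the optimal control $\bm u^*$ of $(TP)_{M_0}^{\bm y_0}$ has $\|\bm u^*\|=M_0$ and satisfies $\bm u^*=\bm 0$ and $\bm y(\cdot;\bm y_0,\bm u^*)\in S$ on $[T(M_0,\bm y_0),+\infty)\supseteq[T,+\infty)$, so $\bm u^*$ is admissible for $(NP)_T^{\bm y_0}$ and attains the norm $N(T,\bm y_0)$; by the uniqueness part of Theorem~\ref{Norm:6} it must coincide with $\bm v^*$, which is nonzero a.e. on $(0,T)$. This contradicts $\bm u^*\equiv\bm 0$ on $(T(M_0,\bm y_0),T)$. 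Hence $T(N(T,\bm y_0),\bm y_0)=T$; and then $\bm v^*$, being admissible for $(TP)_{M_0}^{\bm y_0}$, reaching $S$ exactly at the optimal time $T$ with norm $M_0$, is an optimal control of $(TP)_{M_0}^{\bm y_0}$, hence the optimal control by uniqueness in Proposition~\ref{Time:1}(iii).

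Next I would treat the converse. Fix $M>M(\bm y_0)$, put $T_0:=T(M,\bm y_0)>0$, and let $\bm u^*$ be the optimal control of $(TP)_M^{\bm y_0}$, so $\|\bm u^*\|=M$. Then $\bm u^*$ is admissible for $(NP)_{T_0}^{\bm y_0}$, giving $N(T_0,\bm y_0)\le M$, while $N(T_0,\bm y_0)>M(\bm y_0)$ by strict monotonicity of $N(\cdot,\bm y_0)$. If $N(T_0,\bm y_0)<M$, Corollary~\ref{Time:4} yields $T(N(T_0,\bm y_0),\bm y_0)>T(M,\bm y_0)=T_0$, contradicting the identity $T(N(T_0,\bm y_0),\bm y_0)=T_0$ just established. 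Hence $N(T(M,\bm y_0),\bm y_0)=M$, and $\bm u^*$, being admissible for $(NP)_{T_0}^{\bm y_0}$ with norm $N(T_0,\bm y_0)$, is its optimal control, unique by Theorem~\ref{Norm:6}.

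The only genuinely delicate point is forcing $T(M_0,\bm y_0)=T$ rather than $T(M_0,\bm y_0)<T$: this is exactly where one must use the a.e.-nonvanishing on $(0,T)$ of the minimal-norm control from Theorem~\ref{Norm:6} (which itself rests on analyticity of the underlying semigroup). Everything else is bookkeeping, provided one is careful to invoke each monotonicity statement only on its domain of validity, namely $N(\cdot,\bm y_0)$ on $(0,+\infty)$ and $T(\cdot,\bm y_0)$ on $(M(\bm y_0),+\infty)$.
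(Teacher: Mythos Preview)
Your proposal is correct and follows essentially the same route as the paper's proof: mutual admissibility to get $T(N(T,\bm y_0),\bm y_0)\le T$, then a contradiction via the a.e.-nonvanishing of the minimal-norm control (Theorem~\ref{Norm:6}) to force equality, and for the converse the same admissibility argument combined with the first part and the strict monotonicity of $T(\cdot,\bm y_0)$ (Corollary~\ref{Time:4}). The only cosmetic difference is that the paper routes everything through the transformed problems $(\widetilde{TP})_M^{\bm y_0}$ and $(\widetilde{NP})_T^{\bm y_0}$ via Theorem~\ref{20180225-1}, whereas you argue directly with $(TP)_M^{\bm y_0}$ and $(NP)_T^{\bm y_0}$; given the equivalence established in Theorem~\ref{20180225-1}, this makes no substantive difference.
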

\begin{proof}
Let $T>0$. According to (i) of Theorem~\ref{Norm:6},
 $(NP)_T^{\bm y_0}$ has a unique optimal control $\bm u_1^*$ so that
\begin{equation}\label{Main:1}
\|\bm u_1^*\|_{L^2(0,+\infty;L^2(\Omega)^m)}=N(T,\bm y_0)>0
\;\;\mbox{and}\;\;\bm u_1^*(t)=\bm 0\;\;\mbox{for a.e.}\;\;t>T.
\end{equation}
Thus, it follows by (ii) of Theorem~\ref{20180225-1} that $\bm u_1^*$ is the optimal control to $(\widetilde{NP})_T^{\bm y_0}$. Then by the optimality of $\bm u_1^*$, we have that
\begin{equation}\label{Main:2}
\bm z(T;D\bm y_0,\bm u_1^*)=\bm 0.
\end{equation}
It follows from (\ref{Main:1}) and (\ref{Main:2}) that $\bm u_1^*$ is an admissible control to
$(\widetilde{TP})_{N(T,\bm y_0)}^{\bm y_0}$
and $\widetilde{T}(N(T,\bm y_0),\bm y_0)\leq T$. We now claim that
\begin{equation}\label{Main:3}
\widetilde{T}(N(T,\bm y_0),\bm y_0)=T.
\end{equation}
If (\ref{Main:3}) is proved, then according to (i) of Theorem~\ref{20180225-1},
\begin{equation}\label{20180226-2}
T(N(T,\bm y_0),\bm y_0)=T.
\end{equation}
By contradiction, suppose that (\ref{Main:3}) were not true. Then we would have that
$\widetilde{T}(N(T,\bm y_0),\\ \bm y_0)<T$. By (i) of
Proposition~\ref{Norm:7}, we get that
\begin{equation}\label{Main:3-1}
N(T,\bm y_0)>M(\bm y_0).
\end{equation}
This, along with (iii) of Proposition~\ref{Time:1},
implies that $(TP)_{N(T,\bm y_0)}^{\bm y_0}$ has a unique optimal control $\bm u_2^*$ so that
\begin{equation}\label{20180206-5}
\|\bm u_2^*\|_{L^2(0,+\infty;L^2(\Omega)^m)}=N(T,\bm y_0).
\end{equation}
Thus, according to (i) of Theorem~\ref{20180225-1}, $\bm u_2^*$ is the optimal control to $(\widetilde{TP})_{N(T,\bm y_0)}^{\bm y_0}$.
 Then, by the optimality of $\bm u_2^*$, we obtain that
\begin{equation}\label{20180206-6}
\bm z(\widetilde{T}(N(T,\bm y_0),\bm y_0);D\bm y_0,\bm u_2^*)=\bm 0\;\;
\mbox{and}\;\;\bm u_2^*(t)=\bm 0\;\;
\mbox{for a.e.}\;\;t>\widetilde{T}(N(T,\bm y_0),\bm y_0).
\end{equation}
Meanwhile, since $T>\widetilde{T}(N(T,\bm y_0),\bm y_0)$, by  (\ref{20180206-6}), we see that
\begin{equation}\label{20180226-3}
\bm z(T;D\bm y_0,\bm u_2^*)=\bm 0\;\;\mbox{and}\;\;\bm u_2^*(t)=\bm 0\;\;\mbox{for a.e.}\;\;t>T.
\end{equation}
Since $N(T,\bm y_0)=\widetilde{N}(T,\bm y_0)$ (see (ii) of Theorem~\ref{20180225-1}), by (\ref{20180226-3})
and (\ref{20180206-5}), we observe that
\begin{equation*}
\bm u_2^*\;\;\mbox{is an optimal control to}\;\;(\widetilde{NP})_T^{\bm y_0}.
\end{equation*}
From the latter, (ii) of Theorem~\ref{20180225-1} and the second relation of (\ref{20180206-6})
 it follows that $\bm u_2^*$ is an optimal control to $(NP)_T^{\bm y_0}$ and
 $\bm u_2^*(t)=\bm 0$ for a.e. $t\in (\widetilde{T}(N(T,\bm y_0),\bm y_0),T)$.
 However, by applying (i) of Theorem~\ref{Norm:6} to the problem $(NP)_T^{\bm y_0}$,
 we find that its optimal control $\bm u_2^*$ has the property that $\bm u_2^*(t)\not=\bm 0$ for
 a.e. $t\in (0,T)$. Thus, we get a contradiction. Hence, (\ref{Main:3}) follows.
Furthermore, by  (\ref{Main:1})-(\ref{Main:3}), we see that
$\bm u_1^*$ is an optimal control to $(\widetilde{TP})_{N(T,\bm y_0)}^{\bm y_0}$.
Thus, by (i) of Theorem~\ref{20180225-1}, (\ref{Main:3-1}) and (iii) of Proposition~\ref{Time:1},
we conclude that $\bm u_1^*$ is the optimal control to $(TP)_{N(T,\bm y_0)}^{\bm y_0}$.

Conversely, for each $M>M(\bm y_0)$, according to (iii) of Proposition~\ref{Time:1},
$T(M,\bm y_0)>0$ and
$(TP)_M^{\bm y_0}$ has a unique optimal control  $\bm u_3^*$ so that
\begin{equation}\label{Main:4}
\|\bm u_3^*\|_{L^2(0,+\infty;L^2(\Omega)^m)}=M.
\end{equation}
Thus, $\bm u_3^*$ is the optimal control to $(\widetilde{TP})_M^{\bm y_0}$ (see (i) of
Theorem~\ref{20180225-1}). By the optimality of $\bm u_3^*$, we get that
\begin{equation}\label{Main:5}
\bm z(\widetilde{T}(M,\bm y_0);D\bm y_0,\bm u_3^*)=\bm 0\;\;\mbox{and}\;\;\bm u_3^*(t)=\bm 0\;\;
\mbox{for a.e.}\;\;t>\widetilde{T}(M,\bm y_0).
\end{equation}
Since $\widetilde{T}(M,\bm y_0)=T(M,\bm y_0)$ (see (i) of Theorem~\ref{20180225-1}), it follows from
(\ref{Main:4}) and (\ref{Main:5}) that
 $\bm u_3^*$ is an admissible control to $(\widetilde{NP})_{T(M,\bm y_0)}^{\bm y_0}$ and
\begin{equation}\label{20180226-4}
\widetilde{N}(T(M,\bm y_0),\bm y_0)\leq \|\bm u_3^*\|_{L^2(0,+\infty;L^2(\Omega)^m)}=M.
\end{equation}
This, along with (ii) of Theorem~\ref{20180225-1}, implies that $N(T(M,\bm y_0),\bm y_0)\leq M$.
We next claim that
\begin{equation}\label{Main:6}
N(T(M,\bm y_0),\bm y_0)=M.
\end{equation}
By contradiction, suppose that (\ref{Main:6}) were not true. Then we would have that $N(T(M,\bm y_0),\\ \bm y_0)<M$.
Noting that $T(M,\bm y_0)>0$,
 by (\ref{20180226-2}), (\ref{Main:3-1}) and Corollary~\ref{Time:4},
we get that
\begin{equation*}
T(M,\bm y_0)=T(N(T(M,\bm y_0),\bm y_0),\bm y_0)>T(M,\bm y_0),
\end{equation*}
which leads to a contradiction. Hence, (\ref{Main:6}) follows. It follows from (\ref{Main:6})
and (ii) of Theorem~\ref{20180225-1} that
\begin{equation}\label{20180227-1}
\widetilde{N}(T(M,\bm y_0),\bm y_0)=M.
\end{equation}
Since $\bm u_3^*$ is
an admissible control to $(\widetilde{NP})_{T(M,\bm y_0)}^{\bm y_0}$, by (\ref{20180226-4}) and (\ref{20180227-1}), we observe that
$\bm u_3^*$ is an optimal control to $(\widetilde{NP})_{T(M,\bm y_0)}^{\bm y_0}$. This implies that
$\bm u_3^*$ is the optimal control to $(NP)_{T(M,\bm y_0)}^{\bm y_0}$ (see (ii) of Theorem~\ref{20180225-1} and
(i) of Theorem~\ref{Norm:6}).

\vskip 5pt

Thus, we finish the proof of Proposition \ref{Equivalence:1}.
\end{proof}

\subsection{End of the proof}

According to Proposition~\ref{Time:1}, it suffices
to show (ii) in Theorem~\ref{Intro:17-1}.
Let $T^*$ and $\bm u^*$ be the optimal time and the optimal control to $(TP)_M^{\bm y_0}$.
Since $T^*=T(M,\bm y_0)>0$ (see (iii) of Proposition~\ref{Time:1}), it follows from Proposition~\ref{Equivalence:1}
that $\bm u^*$ is the optimal control to $(NP)_{T^*}^{\bm y_0}$ and $M=N(T^*,\bm y_0)$. These,
along with (i) of Theorem~\ref{Norm:6}, imply (\ref{Intro:18}) and (\ref{Intro:19}).

Conversely, suppose that (\ref{Intro:18}) and (\ref{Intro:19}) hold.
It is obvious that $T^*>0$. By (i) of Theorem~\ref{Norm:6}, (\ref{Intro:18}), (\ref{Intro:19}) and
(i) of Proposition~\ref{Norm:7},
we see that
\begin{equation}\label{Main:9}
\bm u^*(\cdot)\;\;\mbox{is the optimal control to}\;\;(NP)_{T^*}^{\bm y_0}
\;\;\mbox{and}\;\;M=N(T^*,\bm y_0)>M(\bm y_0).
\end{equation}
Since $M=N(T(M,\bm y_0),\bm y_0)$ (see Proposition~\ref{Equivalence:1}), it follows from (i) of Proposition~\ref{Norm:7}
and the second relation of (\ref{Main:9}) that $T^*=T(M,\bm y_0)$. This, together with
Proposition~\ref{Equivalence:1} and  (\ref{Main:9}), yields that $\bm u^*(\cdot)$ is the optimal control
to $(TP)_M^{\bm y_0}$.

In summary, we finish the proof of Theorem~\ref{Intro:17-1}.
\hspace{60mm} $\Box$

\vskip 8pt

\noindent {\bf Acknowlegements.} The authors gratefully thank Professor Gengsheng Wang for his valuable suggestions.

%%%%%%%%%%%%%%%%%%%%%%%%%%%%%%%%%%%%%%%%%%%%%%%%%%%%%%%%%%%%%%%%
%%%%%%%%%%%%%%%%%%%%%%%%%%%%%%%%%%%%%%%%%%%%%%%%%%%%%%%%%%%%%%%

%%%%%%%%%%%%%%%%%%%%%%%%%%%%%%%%%%%%%%%%%%%%%%%%%%%%%%%%%%%%%%%%%

\end{document}